\documentclass[a4paper,reqno]{amsart}
\pdfoutput=1

\addtolength{\voffset}{-5pt}

\usepackage{enumerate} 
\usepackage{enumitem}

\usepackage{amsmath,amsfonts,amssymb,amsthm,stmaryrd,bbm,graphicx,mathtools,enumerate}
\usepackage{mathrsfs}
\usepackage[utf8]{inputenc} 
\usepackage[english]{babel}
\usepackage[tracking,spacing,kerning,babel]{microtype}
\usepackage{wasysym} 
\usepackage{color}
\usepackage{xcolor}
    	\usepackage{framed} 

\usepackage{mdframed} 

\usepackage{wrapfig} 


\usepackage[final]{hyperref}   
\hypersetup{
    linktoc=page,
    linkcolor=red,          
    citecolor=blue,        
    filecolor=blue,      
    urlcolor=cyan,
   colorlinks=true           
}

\usepackage{minitoc}
\usepackage{multicol}

\newtheoremstyle{mine}
{\baselineskip}
{\baselineskip}
{\itshape}
{
}
{\bfseries}
{.}
{.5em}
{#1 #2\ifx#3\relax\else~(#3)\fi}

\theoremstyle{mine}

\newtheorem{theorem}{Theorem}

\newenvironment{ftheo}
  {\begin{mdframed}\begin{theorem}}
  {\end{theorem}\bigskip\end{mdframed}}

\numberwithin{theorem}{section}
\newtheorem{corollary}[theorem]{Corollary}
\newtheorem{proposition}[theorem]{Proposition}
\newtheorem{lemma}[theorem]{Lemma}

\numberwithin{equation}{section}

\theoremstyle{remark}
\newtheorem{remark}{Remark}

\marginparsep = 0.3 cm
\marginparwidth = 2.5 cm



\newcommand{\ind}[1]{\mathbf{1}_{\left\{#1\right\}}}

\def\Z{\mathbb{Z}}
\def\R{\mathbb{R}}
\def\S{\mathbb{S}}
\def\n{\mathbb{N}}
\def\T{\mathbb{T}}
\def\t{\mathbf{t}}
\def\P{\mathbf{P}}
\def\Q{\mathbf{Q}}
\def\E{\mathbf{E}}
\def\L{\mathcal{L}}
\def\rt{\mathcal{R}}
\def\W{\mathbf{W}} 

\def\J{\mathcal{J}}
\def\cap{\mathbf{Cap}}
\def\bcap{\mathbf{BCap}}
\def\rad{\mathbf{rad}}
\def\Es{\mathbf{Es}}
\def\c{\mathbf{c}}
\def\C{\mathbf{C}}
\def\B{\mathbf{B}}
\def\A{\mathcal{A}}
\def\N{\mathcal{N}}

\def\supp{\mathrm{supp}}
\def\d{\mathrm{d}}


\title
[Yaglom theorem for Critical BRW on $\Z^d$]
{
Yaglom theorem for critical branching random walk on $\Z^d$
}

\author{
    Xinxin Chen\textsuperscript{1,*}
    \and 
    Shen Lin\textsuperscript{2,\textsuperscript{$\dagger$}}
}
\date{\today}

\begin{document}

\maketitle

\noindent
\textsuperscript{1}Beijing Normal University, School of Mathematical Sciences, Beijing, China, \\\textsuperscript{*}E-mail: xinxin.chen@bnu.edu.cn;\\
\textsuperscript{2}Sorbonne Université, Université Paris Cité, CNRS, Laboratoire de Probabilités, Statistique et Modélisation, LPSM, F-75005 Paris, France. \\ \textsuperscript{$\dagger$}Email: shen.lin@sorbonne-universite.fr

\vspace{0.5cm}

\noindent
*
Corresponding author: XINXIN CHEN. E-mail: xinxin.chen@bnu.edu.cn;\\
All authors contributed equally to this work.

%
%
%

\begin{abstract}
We study the critical branching random walk on $\Z^d$ started from a distant point $x$ and conditioned to hit some compact set $K$ in $\Z^d$. We are interested in the occupation time in $K$ and present its asymptotic behaviors in different dimensions. It is shown in this work that the occupation time is of order $\|x\|^{4-d}$ in dimensions $d\le3$, of order $\log\|x\|$ in dimension $d=4$, and of order $1$ in dimensions $d\ge 5$. The corresponding weak convergences are also established. These results answer a question raised by Le Gall and Merle (\emph{Elect.~Comm.~in Probab.} 11 (2006), 252–265). 

\noindent\textit{Keywords}: Critical branching random walk, occupation time, Yaglom limit theorem. \\
\noindent\textit{Mathematics Subject Classification}: 60G55, 60J80
\end{abstract}

\maketitle


\section{Introduction}

The study of critical branching random walks (CBRW for short) has long been a fertile ground for probabilistic research, offering rich connections to various aspects of random spatial processes. In contrast to supercritical systems that exhibit exponential growth, or subcritical ones that face rapid extinction, the critical case, where each particle produces exactly one offspring on average, displays a delicate balance that often aligns its behavior with that of ordinary random walks.

This connection becomes particularly evident when examining range asymptotics. While the range of ordinary centered random walks on $\mathbb{Z}^d$ has been extensively studied since the pioneering work of Dvoretzky and Erd\H{o}s \cite{DE1951}, recent developments have revealed analogous behavior for CBRW. In particular, Le Gall and Lin \cite{LeGall-Lin,LeGall-Lin16} established that the size of the range of CBRW follows asymptotic behaviors similar to those of ordinary random walks, with a shift in the critical dimension from 2 to 4. This dimensional shift reflects the fundamental difference in recurrence-transience dichotomy between the two processes: while dimension 2 marks the critical threshold for ordinary random walks, dimension 4 plays the corresponding role for CBRW, as shown in Benjamini and Curien \cite{BC12} and Legrand, Sabot and Schapira \cite{LSS24}.

A similar pattern has also been observed for capacity asymptotics. For the capacity of CBRW ranges, the critical dimension shifts further to $d=6$, as demonstrated in Bai and Wan \cite{BW22} and Bai and Hu \cite{BH22,BH23}. This contrasts with the capacity of ordinary random walk ranges, where dimension 4 is critical, as established in Jain and Orey \cite{JO68}, Asselah, Schapira and Sousi \cite{ASS2018,ASS2019}, and Chang \cite{Chang}.

The development of a discrete capacity theory for critical branching systems has seen significant advances through Zhu's series of works \cite{Zhu+, Zhu2017, Zhu}, which introduced the concept of branching capacity. This framework was further used in \cite{Zhu19+} to construct a model of branching interlacements on $\mathbb{Z}^d$ for $d\geq 5$, which turns out to be the local limit of CBRW on a discrete torus, conditioned on the size proportional to the volume of the torus.

Recent developments continue to refine our understanding of these phenomena. Schapira \cite{Schapira23} obtained a law of large numbers for the branching capacity of random walk ranges in $\mathbb{Z}^d$ ($d\geq 6$), with a logarithmic correction in the critical dimension 6. For the five-dimensional case, Bai, Delmas and Hu \cite{BDH24,BDH24+} proved convergence of the renormalized branching capacity to the Brownian snake capacity of the range of a Brownian motion. Most recently, Baran \cite{Baran} has extended this line of inquiry to the branching capacity of CBRW ranges, where the critical dimension is expected to be 8, establishing a weak law of large numbers with logarithmic corrections in $\mathbb{Z}^8$. In the special case of a stationary CBRW with geometric offspring, Bai and Hu \cite{BH25} have shown a central limit theorem with Gaussian limiting
distribution for the size of the range in sufficiently high dimensions $d>16$.

\subsection{Context} $ $

In the opening paragraph of \cite{LGM06}, Le Gall and Merle raised the following question concerning CBRW, which we quote verbatim:

\medskip

\noindent
\textit{Consider a population of branching particles in $\Z^d$, such that individuals move independently in discrete time according to a random walk with zero mean and finite second moments, and at each integer time individuals die and give rise independently to a random number of offspring according to a critical offspring distribution. Suppose that the population starts with a single individual sitting at a point $x\in\Z^d$ located far away from the origin, and condition on the event that the population will eventually hit the origin. Then what will be the typical number of individuals that visit the origin, and is there a limiting distribution for this number?}
\medskip

The present work is devoted to answering this question in all dimensions.

First, we introduce a probability distribution $\{p_k\}_{k\ge0}$ on $\Z_+$ as the offspring law, satisfying $p_1<1$, and
\begin{equation}\label{offspring}
\sum_{k\ge0}kp_k=1,\quad \sigma^2:=\sum_{k\ge 0}(k-1)^2 p_k\in(0,\infty).
\end{equation}
Secondly, we take a probability distribution $\mu$ on $\Z^d$ to be the jump law. 
Throughout this work, we always assume that $\mu$ has zero mean and is not supported on a strict subgroup of $\Z^d$. We also assume that the covariance matrix $\Gamma$ of $\mu$ exists, and is positive definite. 

Now we are ready to construct the CBRW $\{(u, S_u)\}_{u\in\T}$ on $\Z^d$. We start with one ancestor $\rho$ located at $S_\rho=x\in \Z^d$. At time $1$, $\rho$ dies and gives birth to a random number of children according to the offspring law $\{p_k\}_{k\ge0}$, each child $u$ makes a random jump from $x$ according to $\mu$ independently and the resulting position of $u$ is denoted by $S_u$. The system goes on in this way that each individual $v$ born at time $n$ dies at time $n+1$ and produces a random number of children according to $\{p_k\}_{k\ge0}$, each of its children immediately makes an independent jump from the position of $v$ according to $\mu$. 

Naturally, we obtain a genealogical tree $\T$ rooted at $\rho$, which is distributed as a critical Bienaym\'e--Galton--Watson tree since the offspring law has mean $1$. We denote by $\#\T$ the number of vertices in $\T$. For every individual $u\in \T$, let $S_u$ denote its spatial position in $\Z^d$ and let $|u|$ denote its generation. For any $u,v\in\T$, we write $v\le u$ if $u$ is a descendant of $v$, and write $v< u$ if $v\le u$ and $v\neq u$. The law of the CBRW $\{(u, S_u), u\in\T\}$ is given by $\P$. We denote $\P_x$ for $\P(\cdot\vert S_\rho=x)$ for any $x\in\Z^d$, and the corresponding expectation is denoted by $\E_x$.

For any $n\in\n$, we define the occupation measure $Z_n(\cdot)$ at time $n$ by
\[
Z_n(B):=\sum_{|u|=n}\ind{S_u\in B}, \forall B\subset\Z^d,
\]
with the convention that $\sum_\varnothing=0$. 
The total occupation measure of the whole system is then defined by
\[
Z_\T(B):=\sum_{n\ge0}Z_n(B)=\sum_{u\in\T}\ind{S_u\in B}, \forall B\subset\Z^d.
\]
We sometimes call $Z_\T(B)$ the occupation time of the CBRW at $B$. For convenience, we will write $Z_\T(y)$ for $Z_\T(\{y\})$, and write $Z_n$ for $Z_n(\Z^d)$. 
As $\T$ is a critical BGW tree, the system becomes extinct almost surely.
A well-known theorem of Kolmogorov says that the probability $\P(Z_n>0)$ that the critical branching system survives to generation $n$ is asymptotically equivalent to $\frac{2}{n\sigma^2}$, as $n\to \infty$.

When the ancestor $\rho$ is located at a point $x\in \Z^d$ far away from the origin, we know that with high probability the branching system dies out before some individual arrives in a non-empty finite set $K\subset \Z^d$. This means that
\[
\P_x(Z_\T(K)\ge1)\to 0,\textrm{ as } \|x\|\to\infty,
\]
where $\|\cdot\|$ denotes the Euclidean norm of vectors in $\R^d$. 

As mentioned above, the primary aim of this work is to study the asymptotic behavior of the occupation time $Z_\T(K)$ conditioned on the event $Z_\T(K)\ge1$. Le~Gall and Merle have investigated in~\cite{LGM06} the analogous question for super-Brownian motion in $\R^d$, the continuous counterpart of the discrete-time CBRW, with the compact set $K$ replaced by the unit ball. Our main results provide discrete analogues of Theorem~1 in \cite{LGM06}. In contrast to their techniques based on the method of moments, we present a different approach, relying on spinal decomposition (for $d\geq 5$), the structure of the reduced tree (for $d=4$) and the relation between CBRW and its scaling limit (for $d\leq 3$). 

To state our results more precisely, let us first recall some basic facts about ordinary random walk on $\Z^d$. Under $\P$, let $(S_n)_{n\ge0}$ be a random walk on $\Z^d$ with i.i.d.~increments $X_n=S_{n}-S_{n-1}$, $n\ge1$, which are all distributed as $\mu$. Again, we write $\P_x((S_n)\in\cdot)=\P((S_n)\in\cdot\vert S_0=x)$. For $x\in \R^d$, we set the norm $\J(x):=\sqrt{ x\cdot \Gamma^{-1}x/d}$ with $\Gamma$ being the covariance matrix of $\mu$.

When the dimension $d\ge3$, the random walk is transient. We define its Green function by
\[
g(x,y):=\sum_{n\ge0} \P_x(S_n=y), \quad \forall x,y\in \Z^d.
\]
For any finite subset $K\subset \Z^d$, let $g(x,K)=\sum_{y\in K}g(x,y)$. Clearly, $g(x,y)=g(0,y-x)$. When the jump law $\mu$ is symmetric, $g(x,y)=g(y,x)$. Moreover, if the jump law is assumed to have finite support, we know (see e.g.~Theorem 4.3.1 of \cite{Lawler-Limic}) that as $\|x\|\to\infty$,
\begin{equation}\label{green}
g(0,x)= \frac{\c_d}{\J(x)^{d-2}}+ O\left(\frac{1}{\|x\|^d}\right),
\end{equation}
where $\c_d=\frac{\pmb{\Gamma}(d/2)}{d^{d/2-1}(d-2)\pi^{d/2}\sqrt{\det \Gamma}}$.
Under the weaker assumption that the jump law has a finite $(d-1)$-th moment, we have (see e.g.~Theorem 3.5 of \cite{LeGall}) 
\begin{equation}\label{green-not-finite-support}
g(0,x)= \frac{\c_d}{\J(x)^{d-2}}(1+o_x(1))
\end{equation}
as $\|x\|\to\infty$.

Next, it is well-known (see for example Section 6.5 of \cite{Lawler-Limic}) that the capacity of a finite subset $K\subset \Z^d$ has the following two representations:
\[
\cap(K):=\sum_{y\in K}\underbrace{\P_y(T^+_K=\infty)}_{\Es_K(y)}=\lim_{\|x\|\to \infty}\frac{\P_x(T_K<\infty)}{g(x,0)},
\]
in terms of the hitting time $T_K:=\inf\{n\ge0 \colon S_n\in K\}$ and the first return time $T^+_K:=\inf\{n\ge1 \colon S_n\in K\}$. Here, $\Es_K(y)$ stands for the escape probability that the random walk starting from $y\in K$ never returns to $K$.

For any nonempty finite subset $K$ of $\Z^d$, asymptotics of the hitting probability $\P_x(Z_\T(K)\ge1)$ for CBRW have been studied at first by Le Gall and Lin in \cite{LeGall-Lin} and \cite{LeGall-Lin16} for each dimension $d\geq 1$. The following precise results for dimensions $d\ge5$ and $d=4$ are established in subsequent works of Zhu \cite{Zhu+, Zhu19, Zhu}. 

\begin{itemize}
\item When $d\ge5$, under the \emph{weak $L^d$} assumption that there exists a constant $C>0$, such that for any $r\geq 1$,
\[
	\mu(\{x\in\Z^d\colon \|x\|>r\})< C\cdot r^{-d},
\]
Theorem 1.1 of \cite{Zhu+} shows that 
\begin{equation}\label{5duK}
\lim_{\|x\|\to\infty}\J(x)^{d-2}\P_x(Z_\T(K)\ge 1)=\c_d \cdot\bcap(K)\in(0,\infty),
\end{equation}
where $\bcap(K)$ is the branching capacity defined in \cite{Zhu+} as
\[
	\bcap(K):=\sum_{y\in K}\widetilde\Es_K(y),
\]
where, by analogy, $\widetilde\Es_K(y)$ is the escape probability that some infinite version of CBRW starting at $y\in K$ never returns to $K$ (see \cite[Definition 3.1]{Zhu+} for a precise definition).
Note that \eqref{5duK} can be rewritten as 
\[
	\bcap(K)=\lim_{\|x\|\to \infty}\frac{\P_x(Z_\T(K)\ge 1)}{g(x,0)}.
\]
See also Asselah, Schapira and Sousi \cite{ASS2023} for an approximate variational characterization of the branching capacity. For other connections between capacity and branching capacity, see for example Asselah, Okada, Schapira and Sousi \cite{AOSS} and Bai, Chen and Peres \cite{BCP25}.
Moreover, Theorem 1.2 of \cite{Zhu+} says that conditionally on $\{Z_\T(K)\ge1\}$, the intersection between $K$ and the range $\{S_u, u\in\T\}$ converges in distribution, as $\|x\|\to \infty$.
\item When $d=4$, under the assumption that for some $\lambda>0$,
\[
	\sum_{x\in \Z^4} \mu(x)\cdot \exp(\lambda\|x\|)< \infty,
\]
Theorem 1.1 of \cite{Zhu} proves that
\[
\lim_{\|x\|\to\infty}\big(\J(x)^{2}\log\J(x)\big)\P_x(Z_\T(K)\ge 1)=\frac{1}{2\sigma^2}.
\]
\item When $d\leq 3$, under the assumption that
\begin{equation}\label{eq:assumption-moment4}
	\lim_{r\to\infty}r^4\mu(\{x\in\Z^d\colon \|x\|>r\})=0,
\end{equation}
Theorem 7 of \cite{LeGall-Lin} states that
\[
\lim_{\|x\|\to\infty} \J(x)^2\P_x(Z_\T(0)\ge1)=\frac{2(4-d)}{\sigma^2 d}.
\]
In fact, the original arguments in \cite{LeGall-Lin} can be easily adapted to show the same result for the hitting probability of any nonempty finite set $K$. Let us mention that the proof was initially built on the strong convergence of discrete snakes to the Brownian snake, established in Janson and Marckert \cite{JM} under the finite exponential moment condition on the offspring distribution $\{p_k\}_{k\geq 0}$. This extra assumption has been shown recently by Marzouk \cite{Marzouk} to be unnecessary, and may be weakened to a finite second moment assumption. However, the condition \eqref{eq:assumption-moment4} regarding the spatial displacement is necessary for the strong convergence of discrete snakes to the Brownian snake (see Theorem 2 in \cite{JM}).
\end{itemize}

It reveals that dimension 4 is the critical dimension for the behavior of the hitting probability. 
In fact, when $d<4$, we only need the genealogical tree $\T$ to be large enough (survival up to generation of order $\J(x)^2$) to get outside of the ball $\B(x,r):=\{y\in\Z^d \colon \J(y-x)\le r\}$ with $r=\J(x)$, then with positive probability, the finite set $K$ is attained by some individual. This strategy no longer works when $d\ge4$. In this case, even if we force the genealogical tree $\T$ to be large and the system reaches the boundary of $\B(x,r)$, there is only a small proportion of the sites in $\B(x,r)$ being visited. 

If the CBRW starts with $n$ ancestor particles at time 0, according to Watanabe \cite{Watanabe}, under suitable hypotheses on the initial distribution of ancestor particles, as $n\to \infty$, the measure-valued process naturally associated with the CBRW converges, after being properly rescaled, to the super-Brownian motion $\mathbf{X}_t$ with diffusion parameter~$\sigma^2$. In dimensions 2 and higher, the random measure $\mathbf{X}_t$ is, for each $t > 0$, almost surely singular with respect to the Lebesgue measure on $\R^d$. However, Sugitani \cite{Sugitani} showed that in dimensions 2 and 3, the occupation measure 
\[
\mathbf{L}_t :=\int_0^t \mathbf{X}_s \mathrm{d}s
\] 
up to time $t$ of the super-Brownian motion is absolutely continuous, and there is a jointly continuous version $L_t(x)$ of the density process. (For dimensions 4 and higher, this is no longer true.) Within this setting, Theorem 1 in Lalley and Zheng \cite{LZ10} says that in dimensions 2 and 3, the rescaled local time density process of CBRW converges to the local time density process $L_t(x)$ of the super-Brownian motion. 

Back to our model of CBRW starting with a single ancestor, a similar invariance principle has been established in Theorem 4 of \cite{LeGall-Lin}, which says that in dimension $d\leq 3$, the process 
$$\Big(n^{\frac{d}{4} -1}\,Z_\T(\lfloor n^{1/4}x \rfloor)\Big)_{x\in \R^d\backslash\{0\}}$$
under $\P_x(\cdot\vert \#\T= n)$, converges as $n\to \infty$, to some limiting density process $(\ell^x)$ (up to scaling constants and a linear transformation of the variable $x$), at least in the sense of weak convergence of finite-dimensional marginals. Moreover, the limit $(\ell^x, x\in \R^d)$ is the continuous density process of a random probability measure on $\R^d$, the so-called \emph{Integrated Super-Brownian Excursion} (ISE for short). In dimension $d=1$, this invariance principle (in its functional form) has been obtained earlier by Bousquet-M\'elou and Janson \cite[Theorem 3.6]{BMJanson} and Devroye and Janson \cite[Theorem 1.2]{DevroyeJanson}. 

\subsection{Main results} $ $

We consider all the individuals of CBRW that hit a nonempty finite subset $K\subset \Z^d$ for the first time. 
Let $\L_K$ be the collection of such ``pioneers'' (name adopted from \cite{AS2022} and \cite{BHJ23}), that is
\[
\L_K:=\{u\in\T \colon S_u\in K; \forall \rho\le v < u, S_v\notin K \},
\]
where $\rho\le v< u$ means that $v$ (different from $u$) lies on the unique path in $\T$ connecting $\rho$ and $u$. 
In particular, if $S_\rho\in K$, we set $\L_K=\{\rho\}$. This is a stopping line of the branching random walk. Then, observe that
\begin{equation}
\label{eq:pioneer-decomp}
Z_\T(K)=\sum_{u\in\L_K}\sum_{v: u\le v}\ind{S_v\in K}.
\end{equation}
On the other hand, by conditioning on the number of offspring at each generation, we immediately have 
\[
\E_x[Z_\T(K)]=g(x,K).
\]
Let $L_K:=|\L_K|$ be the cardinality of $\L_K$, i.e.~the number of pioneers in
$K$.

\bigskip

To state our main theorems, let us start with the 4-dimensional case. 

\bigskip

\begin{ftheo}\label{4d}
When $d=4$, we assume \eqref{offspring} and that $\mu$ is symmetric and has finite support. If $K\subset\Z^d$ is compact, then under $\P_x(\cdot\vert Z_\T(K)\ge1)$, the following joint convergence in law holds:
\[
\frac{1}{2\sigma^2 \c_4 \log \J(x)}\left(\frac{L_K}{\cap(K)}, \frac{Z_\T(K)}{|K|}\right)\xrightarrow[\|x\|\to\infty]{\mathrm{(d)}} (1,1)\times Y,
\]
where $Y$ has exponential distribution with parameter $1$, and the constant $\c_4=\frac{1}{8\pi^2\sqrt{\det \Gamma}}$.
\end{ftheo}

\medskip

This is reminiscent of the classical theorem of Yaglom, according to which the number $Z_n$ of particles at generation $n$ divided by $\sigma^2 n/2$, conditionally on the event that the branching process survives to generation $n$, converges in distribution to an exponential random variable with parameter $1$.

Here we choose the jump law to be symmetric with finite support for the sake of technical simplicity. The same result is expected to hold for more general jump laws.

The classical Yaglom's exponential limit law in the case of the critical Bienaym\'e--Galton--Watson tree has several proofs. Yaglom \cite{Yaglom} proved it (under a third moment assumption) via the Laplace transform of the process by analyzing generating functions. Lyons, Pemantle and Peres \cite{LPP} gave a probabilistic proof using the celebrated size-biased BGW tree (also known as the spinal decomposition). Later, Geiger \cite{Geiger} presented another proof of Yaglom’s theorem based on a distributional equation which explains the appearance of the exponential law in the limit. More recently, Ren, Song and Sun \cite{RSS} developed another proof using a two-spine decomposition technique. See also Cardona-Tob\'on and Palau \cite{CP} about Yaglom’s limit for critical Galton–Watson processes in varying environment. The rate of convergence of the classical Yaglom's limit with respect to the Wasserstein metric can be found in Pek\"oz and R\"ollin \cite{PR} and Cardona-Tob\'on, Jaramillo and Palau \cite{CJP}.

In the past few years, Yaglom-type limits have also been proven in various extended settings, including critical non-local branching Markov processes \cite{HHKW}, branching Brownian motion with absorption \cite{MS}, branching diffusions in bounded domains \cite{Pow}, and critical branching
processes in a random Markovian environment with finite state space \cite{GLLP}.

We should compare Theorem \ref{4d} with Theorem 5 in Lalley and Zheng \cite{LZ11}, which says that, for all $\Z^d, d\geq 3$, conditionally on survival to generation $n$, the number of occupied sites in generation $n$, together with the number of sites occupied by $j$ generation-$n$ particles, $j\geq1$, jointly converges in distribution as $n$ goes to infinity, to a deterministic multiple of  a single exponential random variable coming from the classical Yaglom theorem recalled above. In this context, a recent work of Pek\"oz, R\"ollin and Ross \cite{PRR20} has given a rate of convergence in the Wasserstein metric for all $d\geq 3$, and even a second order fluctuation for $d\geq 7$.

By use of Theorem~\ref{4d} and an $L^2$ estimate \eqref{sameinK}, we get the following corollary, suggesting that conditionally on $K$ being hit, all sites inside $K$ are occupied in a fairly uniform manner.

\begin{corollary}\label{cor-4dim}
When $d=4$, we assume \eqref{offspring} and that $\mu$ is symmetric and has finite support. If $K\subset\Z^d$ is compact, then under $\P_x(\cdot\vert Z_\T(K)\ge1)$, the following joint convergence in law holds: 
\[
\left( \frac{Z_\T(y)}{2\sigma^2\c_4\log \J(x)}, y\in K\right)\xrightarrow[\|x\|\to\infty]{\mathrm{(d)}} \underbrace{(1,\cdots,1)}_{\in \R^{|K|}}\times Y,
\]
where $Y$ is as in Theorem \ref{4d}.
\end{corollary}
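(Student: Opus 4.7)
The plan is to derive Corollary~\ref{cor-4dim} directly from Theorem~\ref{4d} by showing that, conditionally on $\{Z_\T(K)\ge 1\}$, the individual occupation times $Z_\T(y)$, $y\in K$, are all asymptotically equal to the average $Z_\T(K)/|K|$. Once this ``equalization'' is available, the one-dimensional limit in Theorem~\ref{4d} immediately upgrades to the desired joint statement.

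First, Theorem~\ref{4d} already gives, under $\P_x(\cdot\mid Z_\T(K)\ge 1)$,
\[
\frac{Z_\T(K)}{|K|\cdot 2\sigma^2\c_4\log \J(x)}\xrightarrow[\|x\|\to\infty]{\mathrm{(d)}}Y,
\]
with $Z_\T(K)=\sum_{y\in K}Z_\T(y)$. The key step is then to use the advertised $L^2$ estimate \eqref{sameinK} to establish that, for any pair $y,y'\in K$,
\[
\E_x\!\left[(Z_\T(y)-Z_\T(y'))^2\,\ind{Z_\T(K)\ge 1}\right]=o\!\left((\log\J(x))^2\,\P_x(Z_\T(K)\ge 1)\right).
\]
This is exactly what \eqref{sameinK} is designed for: the differences $Z_\T(y)-Z_\T(y')$ should be of smaller order than $\log\J(x)$ in the relevant conditional $L^2$ norm, because the associated many-to-two computation involves Green function differences $g(\cdot,y)-g(\cdot,y')$, which decay faster than $g(\cdot,y)$ itself in dimension $4$. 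A Markov inequality then yields
\[
\frac{Z_\T(y)-Z_\T(y')}{2\sigma^2\c_4\log\J(x)}\longrightarrow 0\qquad\text{in }\P_x(\cdot\mid Z_\T(K)\ge 1)\text{-probability}.
\]

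To conclude, for each fixed $y\in K$ we write
\[
Z_\T(y)=\frac{Z_\T(K)}{|K|}+\frac{1}{|K|}\sum_{y'\in K}\bigl(Z_\T(y)-Z_\T(y')\bigr),
\]
so that Slutsky's lemma, combined with Theorem~\ref{4d} and the previous step, shows that $Z_\T(y)/(2\sigma^2\c_4\log\J(x))$ converges in law to $Y$ under $\P_x(\cdot\mid Z_\T(K)\ge 1)$. Joint convergence of the whole vector $(Z_\T(y))_{y\in K}$ to $(1,\dots,1)\times Y$ then follows, since all coordinates share a common scalar limit and their pairwise differences vanish in probability.

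The main technical obstacle of this programme lies entirely in the estimate \eqref{sameinK}, whose proof is carried out separately via a careful many-to-two moment analysis; the remaining work for the corollary is just an application of Slutsky's lemma. It is precisely the cancellation at the level of Green function differences in the critical dimension $d=4$ that makes the differences $Z_\T(y)-Z_\T(y')$ negligible compared to $\log\J(x)$, and thus makes the uniform ``all sites in $K$ are occupied in the same way'' picture correct.
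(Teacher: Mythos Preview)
Your proposal is correct and follows essentially the same route as the paper: the paper simply states that the corollary follows ``by use of Theorem~\ref{4d} and an $L^2$ estimate \eqref{sameinK}'', and your write-up spells out exactly this, via the decomposition $Z_\T(y)=\frac{Z_\T(K)}{|K|}+\frac{1}{|K|}\sum_{y'\in K}(Z_\T(y)-Z_\T(y'))$ together with Slutsky's lemma. One minor remark: in the paper, \eqref{sameinK} is not obtained by a direct many-to-two on the difference with Green-function cancellations as you suggest, but rather as a consequence of \eqref{cov} in Lemma~\ref{2mom}, which shows that all cross-moments $\E_x[Z_\T(y_1)Z_\T(y_2)]$ share the same leading asymptotic, so the leading terms cancel upon expanding the square; the effect is the same, and this does not alter the argument for the corollary itself.
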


Now we turn to the high dimensional case $d\geq 5$.
\medskip

\begin{ftheo}\label{highd}
When $d\geq 5$, we assume \eqref{offspring} and that there exists a constant $C>0$, such that for any $r\geq 1$,
\begin{equation}
  \mu(\{x\in\Z^d\colon \|x\|>r\})< C\cdot r^{-d}. 
  \label{eq:weakLd}
\end{equation}
If $K\subset\Z^d$ is compact, then the following joint convergence in law holds: under $\P_x(\cdot\vert Z_\T(K)\ge1)$,
\[
\left(L_K, Z_\T(K)\right)\xrightarrow[\|x\|\to\infty]{\mathrm{(d)}} (Y_{d,K},Z_{d,K}),
\]
where the limiting random vector $(Y_{d,K},Z_{d,K})$ is non-degenerate and its law depends on $d$ and $K$. 
\end{ftheo}

\smallskip
Note that \eqref{eq:weakLd} implies a finite $(d-1)$-th moment of $\mu$. Moreover, \eqref{eq:weakLd} holds if $\mu$ has a finite $d$-th moment. In the limit, the joint law of $(Y_{d,K}, Z_{d,K})$ also depends on the offspring distribution and the jump law. A full description can be found in Remark \ref{joint-limit}. In particular, the marginal law of $Y_{d,K}$ will be given in \eqref{limitlaw5d}.

\begin{remark}
From the proof of the previous theorem, one can also deduce that under $\P_x(\cdot\vert Z_\T(K)\ge1)$, as $\|x\|\to\infty$,
\[
(Z_\T(y),y\in K) 
\]
converges in law to a random vector $(Z_{d,K}(y), y\in K)$ as $\|x\|\to\infty$. Contrary to the 4-dimensional case, $Z_{d,K}(y), y\in K$ are no longer identical for $d\geq 5$.
\end{remark}

\medskip

For the low dimensional case $d\leq 3$, we consider the Brownian snake under the (infinite) excursion measure $\mathbf{N}_0$. If the Brownian snake is conditioned to have duration 1, its total occupation measure under the conditional probability measure $\mathbf{N}_0^{(1)}$ is the ISE mentioned above. According to Proposition 1 in \cite{LeGall-Lin}, both $\mathbf{N}_0$-a.e.~and $\mathbf{N}_0^{(1)}$-a.s., the total occupation measure has a continuous local time density process on $\R^d$, which will be denoted by $(\ell^x, x\in \R^d)$. See \cite[Section 2.3]{LeGall-Lin} and the references therein for more details about the Brownian snake.
By spherical symmetry of the Brownian snake, for any $\vartheta$ on the sphere $\mathbb{S}^{d-1}=\{x\in \R^d: \|x\|=1\}$, $\ell^\vartheta$ has the same distribution under the excursion measure $\mathbf{N}_0$. Let $\ell_+$ be a positive random variable such that 
\begin{equation}\label{localtime+}
\P(\ell_+\in\cdot)=\mathbf{N}_0(\ell^\vartheta\in\cdot \,\vert\, \ell^\vartheta>0)
\end{equation}
for an arbitrary point $\vartheta\in\mathbb{S}^{d-1}$.

\medskip
\begin{ftheo}\label{lowd}
When $d\leq 3$, we assume \eqref{offspring} and  \eqref{eq:assumption-moment4}.
If $K\subset\Z^d$ is compact, then under $\P_x(\cdot\vert Z_\T(K) \geq 1)$, the following convergence in law holds:
\[
\frac{Z_\T(K)}{|K|\J(x)^{4-d}}\xrightarrow[\|x\|\to\infty]{\mathrm{(d)}} \frac{\sigma^2}{4\sqrt{\det\Gamma}}d^{\frac{4-d}{2}}\ell_+ ,
\]
where $\ell_+$ is defined in \eqref{localtime+}.
\end{ftheo}

\medskip

Applying Proposition 2 in \cite{LeGall-Lin}, we can verify that, up to a multiplicative constant, the limiting distribution in the previous theorem is the same one that appeared in Theorem 1, part (i) of \cite{LGM06} for $d\leq 3$, in the context of super-Brownian motion.

\begin{remark}
It follows from the proof of Theorem \ref{lowd} that under $\P_x(\cdot\vert Z_\T(K)\ge1)$, as $\|x\|\to\infty$, we also have
\[
\left( \frac{Z_\T(y)}{\J(x)^{4-d}}, y\in K\right)\xrightarrow[\|x\|\to\infty]{\mathrm{(d)}} \underbrace{(1,\cdots,1)}_{\in \R^{|K|}}\times \frac{\sigma^2}{4\sqrt{\det\Gamma}}d^{\frac{4-d}{2}}\ell_+.
\]
This is the analog of Corollary \ref{cor-4dim} for $d\leq 3$.
\end{remark}

\subsection{Further discussions} $ $

For convenience, we restrict to $K=\{0\}$ for illustration, and we take $\mu$ to be the simple random walk jump law on $\Z^d$. It is proved by Angel, Hutchcroft and J\'arai \cite{AHJ21} that, assuming the existence of some finite exponential moment for $\{p_k\}_{k\geq 0}$, we have
\[
\P_0(Z_\T(0)\ge n) =
\begin{cases}
 \Theta\bigl(n^{-2/(4-d)}\bigr) & \qquad \mathrm{if} \quad d\leq 3;\\
\exp\left(-\Theta (\sqrt{n})\right)  & \qquad \mathrm{if} \quad d=4;\\
\exp\left( -\Theta(n)\right) & \qquad \mathrm{if} \quad d \geq 5.
\end{cases}
\]
Here the notation $f(n)=\Theta(g(n))$ for every $n\geq 1$ means that there exist positive constants $c$ and $C$ depending only on the offspring distribution $\{p_k\}_{k\geq 0}$ and the dimension $d$ such that $cg(n) \leq f(n) \leq Cg(n)$ for every $n\geq 1$. More general results for the tail of the time spent in a ball (or in a finite collection of balls) by CBRW can be found in Asselah and Schapira \cite{AS2022} and Asselah, Schapira and Sousi \cite{ASS2023}.


Let \(Z_x(0)\) be a random variable distributed as the occupation time at \(0\), \(Z_\T(0)\), of a CBRW \(\{(u, S_u)\}_{u\in\T}\) started at \(S_\rho=x\) in \(\Z^d\). Similarly, let \(L_0^x\) be a random variable distributed as \(L_0\), the number of pioneers in \(\{0\}\), for a CBRW started at \(x\). By \eqref{eq:pioneer-decomp}, we have the following distributional identity:
\[
Z_x(0)\overset{(\mathrm{d})}{=}\sum_{j=1}^{L^x_0}Z^{(j)}_0(0),
\]
where, given \(L^x_0\), the variables \(\{Z^{(j)}_0(0)\}_{1\le j\le L^x_0}\) are i.i.d. copies of \(Z_0(0)\). When \(d\ge3\), since \(\E[Z_0(0)]=\E_0[Z_\T(0)]=g(0,0)<\infty\), the law of large numbers implies that \(L_0^x\) and \(Z_x(0)\) are of the same order of magnitude. However, for \(d\le 2\), the order of magnitude of \(L_0^x\) is strictly smaller than that of \(Z_x(0)\). In fact, if $d=1$, thanks to the fact that $\P_0(Z_\T(0)\ge n)=\Theta(n^{-2/3})$, $L_0^x$ should be of order $\|x\|^2$ so that $Z_x(0)$ is of order $\|x\|^3$. 
When $d=2$, using the Laplace transform and a Tauberian theorem, we expect that $L_0$ is of order $\frac{\|x\|^2}{\log \|x\|}$ under $\P_x(\cdot\vert Z_\T(0)\ge1)$.

Loosely speaking, we have the following table \ref{tab:asym} that summarizes the previous discussions. Here $c_d', \ell_d, d\geq 1$ are positive constants depending on the dimension $d$.
\begin{table}[hbtp]
\centering
\caption{Asymptotics of $(L_0,Z_\T(0))$}
\label{tab:asym}
\begin{tabular}{|c|c|c|c|c|c|}
\hline
dimension & CBRW & CBRW & Order of $(L_0,Z_\T(0))$ & Tail dist.\\
$d$&$\P_x(Z_\T(0)\ge1)$&$\E_x[L_0\vert Z_\T(0)\ge1]$& under $\P_x(\cdot\vert Z_\T(0)\ge1)$ &$\P_0(Z_\T(0)\ge n)$\\
\hline
1&$c_1'\|x\|^{-2}$&$\ell_1\|x\|^2$& $(\Theta(\|x\|^2), \Theta(\|x\|^3))$ & $\Theta(n^{-2/3})$\\
\hline
2&$c_2'\|x\|^{-2}$&$\ell_2\|x\|^2$& $(\Theta(\frac{\|x\|^2}{\log \|x\|} ), \Theta( \|x\|^2 ) )$ & $\Theta(n^{-1})$\\
\hline
3&$c_3'\|x\|^{-2}$&$\ell_3\|x\|$& $(\Theta( \|x\| ), \Theta( \|x\| ))$ & $\Theta(n^{-2})$\\
\hline
4&$\frac{c_4'}{\|x\|^2\log \|x\|}$& $\ell_4\log\|x\|$ & $(\Theta( \log \|x\| ), \Theta( \log\|x\| ))$ & $e^{-\Theta(\sqrt{n})}$\\
\hline
$\ge5$&$c_d'\|x\|^{2-d}$&$\ell_d$&  $(\Theta(1),\Theta(1))$  &  $e^{-\Theta(n)}$\\
\hline
\end{tabular}
\end{table}

To finish, let us mention a recent work of Berestycki, Hutchcroft and Jego \cite{BHJ23} exploring the (pioneer) thick points for \emph{4-dimensional critical branching Brownian motion}. They have identified explicitly the exponent for the large deviation probability that a ball is hit by an exceptionally large number of pioneers, conditionally on hitting the ball. Quite remarkably, in each dimension $d$ an infinite-order asymptotic expansion is given for the probability that critical branching Brownian motion hits a distant unit ball. Using a strong coupling between tree-indexed Brownian motion and tree-indexed random walk, they have also derived some analogous results for the maximal local time and local time thick points of 4-dimensional CBRW.

\subsection{Outline of the paper} $ $

The rest of this article is organized as follows. In Section \ref{lems}, we first recall the spinal decomposition and  collect some technical lemmas about random walk on $\Z^d$. 

In Section \ref{hd}, we will study $L_K$ and $Z_\T(K)$ for $d\ge5$ and establish Theorem \ref{highd} via the spinal decomposition. The limiting distribution will be identified at the end of the proof.

In Section \ref{cd}, we will deal with the critical dimension $d=4$ and prove Theorem~\ref{4d}. 
Here we will develop a spatial version of the reduced tree structure and follow the approach of Geiger \cite{Geiger} which is built on a distributional identity that characterizes the exponential distributions. We also obtain in Proposition \ref{moments} the precise asymptotic for all moments of $L_K$ and $Z_\T(K)$, which may be of independent interest.

Finally, we will prove Theorem \ref{lowd} for $d\le 3$ in Section \ref{ld}. Our proof is based on the arguments developed in \cite{LeGall-Lin}. After conditioning on the size of the genealogical tree, we can carry out a careful analysis by applying the known relations between CBRW and ISE via the Brownian snake. 

We will use $c$ and $C$ to denote constants which may be different from line to line. Moreover, a constant denoted by $c_h$ or $C_h$ means that it depends on $h$, $f(x)=o_x(1)$ means that $f(x)\to 0$ as $\|x\|\to\infty$, and $f(x)\sim g(x)$ means that $f(x)/g(x)\to 1$ as $\|x\|\to\infty$.

\section{Preliminaries}\label{lems}

In this section, we state some facts and lemmas that will be used later.

Let us first introduce some notation. Let 
\[
\mathbb{U}:=\{\varnothing\}\bigcup\Big(\bigcup_{n\ge1}(\n^*)^n\Big)
\]
where $\n^*=\n\setminus\{0\}$. If $u=i_1\cdots i_n\in \mathbb{U}$, it can be viewed as the $i_n$-th child of the $i_{n-1}$-th child of $\cdots$ of the $i_1$-th child of the initial ancestor. This initial ancestor $\rho$ is denoted by $\varnothing$ as an element of $\mathbb{U}$. We call a subset $\t$ of $\mathbb{U}$ a tree if it satisfies
\begin{enumerate}
\item $\varnothing\in\t$;
\item if $uj\in\t$ for some $j\in\n^*$, then $u\in \t$;
\item if $u\in\t$, then there exists some integer $N_u(\t)\in\n= \{0,1,2,\ldots\}$ so that $uj\in\t$ if and only if $1\le j\le N_u(\t)$.
\end{enumerate}

%

For \(u=i_1\cdots i_n\in \mathbb{U}\), its generation is given by \(|u|=n\). For \(0\le k\le n\), we write \(u_k=i_1\cdots i_k\) for the ancestor of \(u\) in the \(k\)-th generation, with the convention that \(u_0=\varnothing\). The ancestral line of \(u\) is thus the finite sequence \((u_0,\dots, u_n)\). For \(u,v\in\mathbb{U}\), we say that \(v\) is a descendant of \(u\) if \(u\) is an ancestor of \(v\). If \(u\) belongs to a tree \(\t\), let \(\t_u\) be the subtree rooted at \(u\), defined by
\[
\t_u:=\{v\in\t\colon v=uj_1\cdots j_m, \textrm{ for some } j_1\cdots j_m \in \mathbb{U}\}.
\]

The lexicographic order on \(\mathbb{U}\) is defined as follows. For \(u=i_1\cdots i_n\) and \(v=j_1\cdots j_m\) in \(\mathbb{U}\), we say that \(u\) is on the left of \(v\) (equivalently, \(u\) is strictly smaller than \(v\) in the lexicographic order) if
\begin{itemize}
\item either there exists \(k\le n-1\) such that \(i_r=j_r\) for all \(1\le r\le k\) and \(i_{k+1}<j_{k+1}\);
\item or \(n<m\) and \(i_r=j_r\) for every \(1\le r\le n\).
\end{itemize}
For two distinct vertices \(u,v\in\mathbb{U}\), we say that \(u\) is on the right of \(v\) if \(u\) is not on the left of \(v\). In particular, \(u\) lies on the left of all its strict descendants.

The genealogical tree $\T$ of the CBRW is viewed as a random tree taking values in $\mathbb{U}$.
For any $u\in\T$, the corresponding subtree $\T_u$ (containing $u$) is defined as above.

\subsection{Change of measure and Spinal decomposition} $ $

As the offspring law has mean $1$, $(Z_n=Z_n(\Z^d))_{n\ge0}$ is a critical Bienaym\'e--Galton--Watson process. It is immediate to see that $(Z_n)_{n\ge0}$ is a martingale with respect to the natural filtration $(\mathcal{F}_n)_{n\ge0}$ where $\mathcal{F}_n:=\sigma((u, S_u),|u|\le n)$. Note that the lexicographic information up to the $n$-th generation is also contained in $\mathcal{F}_n$. Therefore, for all $x\in \Z^d$ we can define a new probability measure $\Q_x$ as follows
\[
\frac{\mathrm{d}\Q_x}{\mathrm{d}\P_x}\Big\vert_{\mathcal{F}_n}=Z_n, \quad \forall n\ge0.
\]

In what follows, we construct a probability measure $\Q^*_x$ on the branching random walk $\{(u,S_u), u\in\T\}$ with a marked ray $(w_n)_{n\ge0}$, so that the marginal law of $\{(u,S_u), u\in\T\}$ under $\Q_x^*$ is $\Q_x$. 
\begin{itemize}
\item We start with an initial ancestor $\rho$ located at $S_\rho=x$, and we take $w_0=\rho$. 
\item At time $1$, the root $\rho$ dies and produces a random number of children according to the size-biased offspring law:
\[
\hat{p}_k=kp_k, \quad \forall k\ge0.
\] 
All these children form the first generation. Each child makes an independent jump from $S_\rho$ according to $\mu$. Among the children of $w_0$, we choose uniformly one to be $w_1$. Let $\L(w_1)$ be the set of children of $w_0$ which are on the left of $w_1$ and let $\rt(w_1)$ be the set of children of $w_0$ which are on the right of $w_1$.
\item For any $n\ge1$, assume that the system is well constructed up to the $n$-th generation. At time $n+1$, $w_n$ dies and produces a random number of children according to $\{\hat{p}_k\}_{k\ge0}$, while any other individual of the $n$-th generation dies and produces independently a random number of children according to $\{p_k\}_{k\ge0}$. Each new born individual makes an independent jump from the position of its parent according to $\mu$. All children of the individuals of the $n$-th generation form the $(n+1)$-th generation. Among the children of $w_n$, we choose uniformly one to be $w_{n+1}$ and let $\L(w_{n+1})$ (or $\rt(w_{n+1})$) be the set of children of $w_n$ which are on the left (or right respectively) of $w_{n+1}$.
\item By abuse of notation, we still write $\T$ for the genealogical tree formed by the individuals of this system, rooted at $w_0=\rho$. The position of each individual $u\in\T$ is still denoted by $S_u$. The marked ray $(w_n)_{n\ge0}$ is called the spine of $\T$.
\end{itemize}
For any $n\ge1$, let $L(w_n)$ and $R(w_n)$ be the cardinality of $\L(w_n)$ and of $\rt(w_n)$ respectively. 
According to the construction, we have
\[
\E_{\Q^*_x}[L(w_n)+R(w_n)]=\sum_{k\ge1} k\hat{p}_k-1=\sigma^2.
\]
This can also be recovered by the fact that 
\[
\Q^*_x(L(w_n)=i, R(w_n)=j)=p_{i+j+1}, \quad \forall i,j\ge0.
\]

As a consequence of the change of measure stated above, we have the following proposition. 

\begin{proposition}\label{spine}
Under the probability $\Q^*_x$ with any $x\in\Z^d$,
\begin{enumerate}
\item for any $n\ge0$, 
\[
\Q_x^*(w_n=u\vert \mathcal{F}_n)=\frac{1}{Z_n}, \forall u\in\T \textrm{ such that } |u|=n;
\]
\item the random variables $ (L(w_n), R(w_n))_{n\ge1}$ are i.i.d.;
\item along the spine $(w_n)_{n\ge0}$, $(S_{w_n})_{n\ge0}$ is a random walk with i.i.d.~increments distributed as $\mu$, started from $S_{w_0}=x$, which is independent of $ (L(w_n), R(w_n))_{n\ge1}$.
\item conditioned on $(w_n, S_{w_n}; \L(w_n)\cup \rt(w_n), (S_u)_{u\in\L(w_n)\cup \rt(w_n)})_{n\ge1}$, for all $u\in\bigcup_{n\ge1}(\L(w_n)\cup \rt(w_n))$, its descendants $(v, S_v)_{v\in\T_u}$ form an independent branching random walk of law $\P_{S_u}$.
\end{enumerate}

\end{proposition}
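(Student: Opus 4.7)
The plan is to verify all four properties by unfolding the explicit size-biased construction of $\Q^*_x$. The central identity to establish first is that, for every $n\ge 0$, every $\mathcal{F}_n$-measurable event $A$, and every $u\in\mathbb{U}$ with $|u|=n$,
\[
\Q^*_x(A,\ w_n=u)=\P_x(A,\ u\in\T).
\]
I would prove this by a direct computation on a finite event that fixes the truncated tree $\t$ at generation $n$, the spatial positions of all its vertices, and the ancestral line $(u_0,\dots,u_n)$ of the spine with $u_n=u$. Along the spine, each vertex $u_k$ with $k<n$ and $N_{u_k}(\t)$ children contributes the size-biased factor $\hat p_{N_{u_k}}$ from the offspring distribution, followed by a factor $1/N_{u_k}$ from the uniform choice of the next spine vertex; these multiply to the unbiased probability $p_{N_{u_k}}$. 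Every off-spine vertex $v$ with $|v|<n$ contributes $p_{N_v(\t)}$, exactly as under $\P_x$, and the jump contributions from $\mu$ are common to both sides. Summing this identity over the $Z_n$ admissible positions of $u$ recovers the Radon--Nikodym relation $\frac{\d\Q^*_x}{\d\P_x}\big|_{\mathcal{F}_n}=Z_n$ from the construction, and property (1) follows immediately since the $Z_n$ choices contribute equally.

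Properties (2)--(4) are then direct consequences of the step-by-step Markovian construction. For (2), conditionally on the past up to generation $n-1$ the number of children of $w_{n-1}$ has law $\hat p$ and $w_n$ is chosen uniformly among them, hence
\[
\Q^*_x\bigl(L(w_n)=i,\ R(w_n)=j\bigr)=\hat p_{i+j+1}\cdot\frac{1}{i+j+1}=p_{i+j+1},
\]
for all $i,j\ge 0$, independently of $n$ and of the earlier generations. For (3), the jumps attached to the children of $w_{n-1}$ are by construction i.i.d.~of law $\mu$ and independent of the branching data; since the uniform selection of $w_n$ among these children is itself independent of the jumps, the increment $S_{w_n}-S_{w_{n-1}}$ has law $\mu$ and is independent of $(L(w_n),R(w_n))$ and of the past of the spine, and iterating over $n$ yields the full claim. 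Finally, (4) is immediate from the recursive definition: conditionally on the spine, on its brothers $\L(w_n)\cup\rt(w_n)$, and on their positions, every off-spine individual $u$ evolves according to the unbiased offspring law $\{p_k\}_{k\ge 0}$ and the jump law $\mu$, which by definition produces an independent CBRW of law $\P_{S_u}$.

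The only real obstacle is the bookkeeping underlying the central identity displayed above. The key ingredient is the cancellation $\hat p_k\cdot (1/k)=p_k$, which ensures that the marginal law of $(u,S_u)_{u\in\T}$ under $\Q^*_x$ biases $\P_x$ by exactly the factor $Z_n$ at each generation; once this is in place, all four properties follow without further probabilistic input.
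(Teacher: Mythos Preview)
The paper does not actually prove this proposition; it is stated as a direct consequence of the explicit size-biased construction and the change of measure, in the spirit of the classical spinal decomposition of Lyons--Pemantle--Peres \cite{LPP}. Your sketch is correct and is exactly the standard argument one would supply here: the cancellation $\hat p_k\cdot(1/k)=p_k$ along the ancestral line yields the key identity $\Q^*_x(A,\,w_n=u)=\P_x(A,\,u\in\T)$, from which (1) and the Radon--Nikodym relation follow, while (2)--(4) are read off from the Markovian construction.
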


Recall that $(u_0,\cdots, u_{|u|})$ is the ancestral line of $u$ and that $(S_n)$ is a random walk with i.i.d.~increments distributed as $\mu$ under $\P$. The following Many-to-One lemma is derived from the previous proposition.

\begin{lemma}\label{Manyto1}
For any integer $n\ge0$ and $x\in\Z^d$, for any non-negative measurable function $F:\R^{n+1}\to \R_+$, we have
\[
\E_x\left[\sum_{|u|=n}F(S_{u_k}, 0\le k\le n)\right]= \E_x\big[F(S_k, 0\le k\le n)\big].
\]
\end{lemma}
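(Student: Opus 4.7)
The plan is to deduce the Many-to-One lemma directly from the spinal decomposition already established in Proposition \ref{spine}, by computing $\E_{\Q^*_x}[F(S_{w_0},\ldots,S_{w_n})]$ in two different ways.

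First I would compute it using property (3) of Proposition \ref{spine}: since under $\Q^*_x$ the spatial process $(S_{w_k})_{k\ge 0}$ along the spine is an i.i.d.\ random walk with jump law $\mu$ started from $x$, we immediately get
\[
\E_{\Q^*_x}\bigl[F(S_{w_k},0\le k\le n)\bigr] = \E_x\bigl[F(S_k,0\le k\le n)\bigr].
\]

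Next I would compute the same quantity by conditioning on $\mathcal{F}_n$. By property (1) of Proposition \ref{spine}, given $\mathcal{F}_n$ the vertex $w_n$ is uniform on the $n$-th generation, and the ancestral line $(w_0,\ldots,w_n)$ is determined by $w_n$, so
\[
\E_{\Q^*_x}\bigl[F(S_{w_k},0\le k\le n)\,\bigm|\,\mathcal{F}_n\bigr] = \frac{1}{Z_n}\sum_{|u|=n} F(S_{u_k},0\le k\le n)
\]
on $\{Z_n>0\}$, where the right-hand side is understood as $0$ when $Z_n=0$ (in which case the sum is empty). Taking expectations and applying the change-of-measure identity $\d\Q_x/\d\P_x|_{\mathcal{F}_n}=Z_n$, the factor $Z_n$ cancels the $1/Z_n$ (with the empty sum handling the $Z_n=0$ event), giving
\[
\E_{\Q^*_x}\bigl[F(S_{w_k},0\le k\le n)\bigr] = \E_x\Bigl[\sum_{|u|=n} F(S_{u_k},0\le k\le n)\Bigr].
\]

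Equating the two expressions yields the lemma. There is no real obstacle: the only subtle point is handling the event $\{Z_n=0\}$ in the cancellation $Z_n\cdot(1/Z_n)$, which is harmless because the right-hand sum vanishes there. I would treat measurability by a standard monotone class argument, first proving the identity for $F=\mathbf{1}_A$ with $A$ a product Borel set, then extending to general non-negative measurable $F$ by monotone convergence; this is routine and would not require being spelled out in detail.
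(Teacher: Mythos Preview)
Your proposal is correct and is precisely the derivation the paper has in mind: the paper does not spell out a proof but simply remarks that the Many-to-One lemma ``is derived from the previous proposition'', i.e., from the spinal decomposition of Proposition~\ref{spine}. Your two-way computation of $\E_{\Q^*_x}[F(S_{w_k},0\le k\le n)]$ via properties (1) and (3) of that proposition, combined with the change of measure $\d\Q_x/\d\P_x|_{\mathcal{F}_n}=Z_n$, is exactly the intended argument.
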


\subsection{Results for random walk on $\Z^d$}\label{SRW} $ $

In this subsection, we state some results on the $d$-dimensional random walk $(S_n)_{n\ge0}$, whose proofs will be postponed to Appendix \ref{appendixA}. 

For a finite subset $K\subset\Z^d$, define the hitting time of $K$ by
\[
T_K:=\inf\{n\ge0\colon S_n\in K\}.
\]
For convenience, we write $q_K(x):=\P_x(T_K<\infty)$ for any $x\in \Z^d$. We also define $T^+_K:=\inf\{n\ge1 \colon S_n\in K\}$.

Recall that when $d\ge3$, for a finite set $K\subset \Z^d$, as $\|x\|\to\infty$,
\begin{equation}\label{hittingprobabS}
\P_x(T_K<\infty)= g(x,0)\cap(K)(1+o_x(1)).
\end{equation}
Assume that $\mu$ has a finite $(d-1)$-th moment. Then together with \eqref{green-not-finite-support}, we see that 
\begin{equation}\label{hittingprobabS-2}
\P_x(T_K<\infty)= \frac{\c_d\cap(K)}{\J(x)^{d-2}}(1+o_x(1)).
\end{equation}
For any $r>0$, we define the ball $\B_r:=\{z\in\Z^d \colon \J(z)\le r\}$ and the exit time
\[
\tau_r:=\inf\{n\ge0 \colon S_n\notin \B_r\}=\inf\{n\ge0 \colon \J(S_n)>r\}.
\]
When $\mu$ is finitely supported, we will consider the support of $S_{\tau_r}$, denoted by $\partial_\mu \B_r$. In other words,
\[
\partial_\mu \B_r=\{z\in \B_r^c \colon \exists\, x\in \B_r\textrm{ such that } \mu(x-z)>0\},
\]
where $\B_r^c$ is the complementary of $\B_r$ in $\Z^d$.

Note that if the random walk starts from $x$ and ends at $y$ with $\|x\|\wedge \|y\|\gg1$, then it hardly enters a finite set $K\subset\Z^d$ when $d\ge3$. This is what the following lemma states. 

\begin{lemma}\label{fargreen}
We assume that the jump law $\mu$ on $\Z^d$ has zero mean and a finite $(d-1)$-th moment. 
When $d\ge3$, for any finite subset $K\subset\Z^d$, 
there exists some constant $C>0$ (depending on $K$ and $d$) such that for all $x,y \in \Z^d$,
\[
g(x,y)-\sum_{n=0}^\infty\P_x(S_n=y, T_K>n) \le \frac{C}{\J(x)^{d-2}\J(y)^{d-2}}.
\]
\end{lemma}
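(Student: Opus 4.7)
The idea is to interpret the left-hand side as the expected number of visits to $y$ that occur at or after the first entry time $T_K$, and then factor this via the strong Markov property at $T_K$.

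Concretely, we write
\begin{equation*}
g(x,y) - \sum_{n=0}^\infty\P_x(S_n=y,\,T_K>n)
= \E_x\Big[\ind{T_K<\infty}\sum_{n\ge T_K}\ind{S_n=y}\Big].
\end{equation*}
The strong Markov property at the stopping time $T_K$, combined with a decomposition according to the entry point $S_{T_K}\in K$, then gives
\begin{equation*}
\E_x\Big[\ind{T_K<\infty}\sum_{n\ge T_K}\ind{S_n=y}\Big]
= \sum_{z\in K}\P_x\bigl(T_K<\infty,\,S_{T_K}=z\bigr)\,g(z,y)
\le \P_x(T_K<\infty)\cdot\max_{z\in K}g(z,y).
\end{equation*}

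It will then suffice to bound each factor by a constant (depending only on $K$ and $d$) times $\J(\cdot)^{-(d-2)}$, uniformly in its argument. For the hitting probability, the asymptotic \eqref{hittingprobabS-2} provides $\P_x(T_K<\infty)\le C_K/\J(x)^{d-2}$ once $\J(x)$ is large enough, while the trivial bound $\P_x(T_K<\infty)\le 1$ handles smaller $x$; together these yield a uniform estimate $\P_x(T_K<\infty)\le C_K'/(\J(x)\vee 1)^{d-2}$ for all $x\neq 0$. For the Green function, \eqref{green-not-finite-support} applied to $g(z,y)=g(0,y-z)$ gives $g(z,y)\le C/\J(y-z)^{d-2}$ for $\|y-z\|$ large; since $K$ is finite, one has $\J(y-z)\ge \J(y)/2$ once $\J(y)$ exceeds some $R_0=R_0(K)$, and combining with the trivial bound $g(z,y)\le g(0,0)<\infty$ yields $\max_{z\in K}g(z,y)\le C_K''/(\J(y)\vee 1)^{d-2}$ for all $y\neq 0$.

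Multiplying the two estimates and using $(\J(\cdot)\vee 1)^{d-2}\ge \J(\cdot)^{d-2}$ gives the claimed bound with an appropriate $C=C(K,d)$; the degenerate cases $x=0$ or $y=0$ are immediate since the right-hand side of the asserted inequality is then infinite. The only point requiring care is the passage from the pointwise asymptotics \eqref{hittingprobabS-2}--\eqref{green-not-finite-support} to uniform-in-space estimates, but this is routine and is handled simply by splitting the lattice into bounded and unbounded regimes.
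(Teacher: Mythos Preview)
Your proof is correct and follows essentially the same route as the paper: both decompose via the strong Markov property at $T_K$ to obtain $\sum_{z\in K}\P_x(T_K<\infty,\,S_{T_K}=z)\,g(z,y)$, then bound each factor separately using the Green-function/hitting-probability asymptotics together with trivial bounds near the origin. The only cosmetic difference is that the paper writes $g(z,y)=\P_z(T_y<\infty)\,g(0,0)$ and invokes time-reversal before applying \eqref{hittingprobabS-2}, whereas you apply \eqref{green-not-finite-support} to $g(z,y)$ directly; your version is arguably a touch more direct.
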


The next lemma will be used in the proof of Theorem \ref{highd} when $d\ge 5$, and also in the proof of Theorem \ref{lowd} when $d\le 3$.

\begin{lemma}\label{bigjump}
Given any $x\in \Z^d, d\geq 1$, we have for $R>r>\J(x)$ that 
\begin{equation}\label{eq:bigjump}
  \P_x(\J(S_{\tau_r})>R)\leq \E_x[\tau_r]\cdot \P(\J(X_1)>R-r).
\end{equation}
If we assume that the jump law $\mu$ on $\Z^d$ has zero mean and a finite $(d-1)$-th moment (with $d\geq 3$), then
\begin{equation}\label{eq:bigjump-asymp}
  \P_x(\J(S_{\tau_r})>2r)=o(r^{3-d}) \quad \mbox{ as } r\to \infty.
\end{equation}
\end{lemma}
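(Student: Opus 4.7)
For the first inequality, I would exploit the triangle inequality for the norm $\J$. Under the hypothesis $\J(x) < r$ we have $\tau_r \geq 1$ almost surely, so at time $\tau_r - 1$ the walk still lies in $\B_r$ and $\J(S_{\tau_r - 1}) \leq r$. Writing $X_{\tau_r} = S_{\tau_r} - S_{\tau_r - 1}$, the triangle inequality yields $\J(S_{\tau_r}) \leq r + \J(X_{\tau_r})$, and hence
\[
\{\J(S_{\tau_r}) > R\} \subseteq \{\J(X_{\tau_r}) > R - r\}.
\]
Decomposing on the value of $\tau_r$ and bounding $\mathbf{1}_{\{\tau_r = n\}}$ by $\mathbf{1}_{\{\tau_r \geq n\}}$ gives
\[
\P_x(\J(X_{\tau_r}) > R-r) \leq \sum_{n \geq 1} \P_x\big(\tau_r \geq n,\, \J(X_n) > R - r\big).
\]
Since the event $\{\tau_r \geq n\}$ is $\mathcal{F}_{n-1}$-measurable and hence independent of $X_n$, each summand factorizes as $\P_x(\tau_r \geq n) \cdot \P(\J(X_1) > R - r)$, and the tail formula $\E_x[\tau_r] = \sum_{n \geq 1}\P_x(\tau_r \geq n)$ delivers \eqref{eq:bigjump}.

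For the asymptotic statement, I would apply \eqref{eq:bigjump} with $R = 2r$ and control the two factors separately. The jump tail satisfies $\P(\J(X_1) > r) = o(r^{1-d})$: indeed the finite $(d-1)$-th moment hypothesis gives $\E[\J(X_1)^{d-1}] < \infty$, and dominated convergence applied to $\J(X_1)^{d-1}\,\mathbf{1}_{\{\J(X_1) > r\}}$ forces $r^{d-1}\,\P(\J(X_1) > r) \to 0$. For the expected exit time, I would use that $M_n := \J(S_n)^2 - n$ is a martingale (since $\E[X_1^\top \Gamma^{-1} X_1]/d = \mathrm{tr}(\Gamma^{-1}\Gamma)/d = 1$) together with the standard CLT-based observation that there exists $c > 0$ with $\P_x(\tau_r > c r^2) \leq 1/2$ uniformly for $x \in \B_r$ and $r$ large. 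Iterating the Markov property produces the exponential tail $\P_x(\tau_r > k c r^2) \leq 2^{-k}$, whence $\E_x[\tau_r] \leq C r^2$ uniformly in $x \in \B_r$. Combining,
\[
\P_x(\J(S_{\tau_r}) > 2r) \leq C r^2 \cdot o(r^{1-d}) = o(r^{3-d}),
\]
which is \eqref{eq:bigjump-asymp}.

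The main obstacle is the uniform $O(r^2)$ bound on $\E_x[\tau_r]$: because the jump law is not assumed bounded, a naive application of optional stopping to $M_n$ at time $\tau_r$ produces an overshoot term that is itself controlled by $\E_x[\tau_r]$, leading to a circular inequality. The CLT-plus-iteration route cleanly circumvents this difficulty. Once this standard estimate is in hand, the remainder of the argument is just the triangle inequality and a one-step factorization via the independence of increments.
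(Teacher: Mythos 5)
Your proof is correct and follows essentially the same route as the paper's: the exit-time bound \eqref{eq:bigjump} is obtained by the same decomposition over the value of $\tau_r$, the independence of $X_n$ from $\{\tau_r\ge n\}$, and the tail formula for $\E_x[\tau_r]$; the asymptotic \eqref{eq:bigjump-asymp} then combines the finite $(d-1)$-th moment (giving $\P(\J(X_1)>r)=o(r^{1-d})$) with $\E_x[\tau_r]=O(r^2)$. The only difference is that the paper simply cites the $O(r^2)$ exit-time bound as well known, whereas you supply a (correct) CLT-plus-iteration sketch for it.
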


The following lemma applies differently for $d=4$ or $d\ge5$. 

\begin{lemma}\label{bdhgg}
We assume that the jump law $\mu$ on $\Z^d$ has zero mean and a finite $(d-1)$-th moment. 
If $h \colon \Z^d\to \R_+$ is a function such that
\[
h(x)\le C_0 \frac{(\log (2+\|x\|))^k}{1+\|x\|^{d-2}},
\]
with some $k\in\mathbb{N}$ and $C_0\in(0,\infty)$, then there exists some constant $C_1>0$ (depending on $d$) such that for any $a,b\in\Z^d$,
\begin{enumerate}
\item when $d=4$, 
\begin{equation}\label{hgg4d}
\sum_{w\in\Z^4}h(w) g(a,w)g(w,b)\le C_1C_0 \frac{(\log (2+M))^k[1\vee \log\frac{M+1}{m+1}]}{(1+M)^2};
\end{equation}
\item when $d\ge5$,
\begin{equation}\label{hgg5d}
\sum_{w\in\Z^d}h(w) g(a,w)g(w,b)\le C_1C_0 \frac{(\log (2+M))^k(1+m)^{4-d}}{(1+M)^{d-2}},
\end{equation}
\end{enumerate}
where $M=\max\{\|a\|, \|b\|, \|a-b\|\}$ and $m=\min\{\|a\|,\|b\|, \|a-b\|\}$.
\end{lemma}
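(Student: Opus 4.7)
The plan is to estimate the left-hand side pointwise using the Green function bound $g(x,y) \leq C(1+\|y-x\|)^{-(d-2)}$, which for $d\ge 3$ follows from \eqref{green-not-finite-support} together with the uniform bound $g(x,y) \leq g(0,0) < \infty$. This reduces the problem to controlling
\[
T := \sum_{w \in \Z^d} \frac{(\log(2+\|w\|))^k}{(1+\|w\|)^{d-2}(1+\|w-a\|)^{d-2}(1+\|w-b\|)^{d-2}}.
\]

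First I would isolate the tail $\{\|w\| > 3M\}$, on which $\|w\|$, $\|w-a\|$ and $\|w-b\|$ are all comparable to $\|w\|$. A dyadic shell estimate yields a tail contribution of order $(\log M)^k M^{6-2d}$, which is absorbed by the target bound in both cases (for $d=4$ it is $O((\log M)^k/M^2)$, and for $d\ge 5$ it is dominated by $(\log M)^k m^{4-d}/M^{d-2}$ since $m\le M$). On the near region $\{\|w\|\le 3M\}$, one can replace $(\log(2+\|w\|))^k$ by $C(\log(2+M))^k$ at the cost of an absolute constant, so it remains to bound the three-point Green product sum there.

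I would then partition the near region into Voronoi cells $A_0, A_a, A_b$ according to which of $\{0,a,b\}$ is closest to $w$, and stratify each $A_p$ by dyadic shells $\{r\le\|w-p\|<2r\}$ for $r=1,2,4,\dots$. The key geometric observation is that on $A_p$, for any other special point $p'$, the triangle inequality combined with the closest-point property forces $\|w-p'\|\ge \|p-p'\|/2$, hence $\|w-p'\|\asymp \max(\|w-p\|, \|p-p'\|)$. A shell of radius $r$ has volume $\asymp r^d$, so it contributes at most $C r^{2}\prod_{p'\ne p}\max(r, \|p-p'\|)^{-(d-2)}$ (times $(\log M)^k$). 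Summing over $r$ from $1$ to $M$ yields a one-dimensional sum whose behavior depends on the dimension.

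In $d=4$, the shells in the range $m\lesssim r\lesssim M$ each contribute comparable amounts of order $(\log M)^k/M^2$, producing by summation the harmonic factor $\log(M/m)$ in the final bound; in $d\ge 5$, the same shell sum is geometric in $r$ and dominated by the smallest scale $r\asymp m$, producing the factor $m^{4-d}$. The main obstacle is the case analysis needed to identify which Voronoi cell $A_p$ and which edge of the triangle $\{0,a,b\}$ realize the minimum distance $m$, since the geometry of the dominant shells depends on whether $m$ is $\|a\|$, $\|b\|$ or $\|a-b\|$. Once the dominant case is identified, one must also check that the subdominant contributions (the first-step tail, the other two Voronoi cells, and shells with $r$ outside $[m,M]$) are all absorbed into the main term; this is a routine but careful bookkeeping.
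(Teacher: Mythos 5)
Your proposal is correct and follows essentially the same route as the paper: the same pointwise Green-function bound reduces the problem to the three-point sum, which the paper likewise splits into near-balls around $0$, $a$, $b$, an intermediate annulus of scales between $m$ and $M$ (producing the harmonic factor $\log(M/m)$ in $d=4$ and a geometric sum dominated by scale $m$ in $d\ge5$), and a far tail of order $M^{6-2d}$. Your Voronoi-cell-plus-dyadic-shell bookkeeping is a symmetric repackaging of the paper's explicit ball decomposition $K_a,K_b,K_c,K_t,K_t^c$, resting on the same triangle-inequality comparisons.
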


The last lemma holds when $d=4$.

\begin{lemma}\label{sumh}
We assume that the jump law $\mu$ on $\Z^4$ is symmetric with finite support.
For a non-negative measurable function $h\colon \Z^4\to\R_+$ such that
\[
h(x)\sim c_h \frac{(\log \J(x))^k }{\J(x)^2},\textrm{  as } \|x\|\to\infty,
\]
with some integer $k\in\mathbb{N}$ and some constant $c_h>0$, we have the following estimates:
\begin{enumerate}
\item For any $x\in\Z^4$, there exists a constant $C_h\in(0,\infty)$ such that
\begin{equation}\label{bdsumh}
\E_x\Bigg[\sum_{j=0}^{T_K}h(S_j); T_K<\infty\Bigg]\le C_h \frac{\log(2+\J(x))^{k+1}}{(1+\J(x))^2}.
\end{equation}
\item As $\|x\|\to\infty$, 
\begin{equation}\label{cvgsumh}
\E_x\Bigg[\sum_{j=0}^{T_K-1}h(S_j)\,\bigg\vert\, T_K<\infty\Bigg]\sim \frac{4c_h}{k+1}(\log\J(x))^{k+1}.
\end{equation} 
\end{enumerate}
\end{lemma}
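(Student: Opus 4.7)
The starting point for both parts is the identity
\[
  \E_x\Big[\sum_{j=0}^{T_K-1}h(S_j)\,\mathbf{1}_{T_K<\infty}\Big]=\sum_{y\notin K}h(y)\,G_K(x,y)\,q_K(y),
\]
where $G_K(x,y):=\sum_{j\ge 0}\P_x(S_j=y,\,j<T_K)$ is the Green function of the walk killed upon entering $K$. This is obtained by applying the Markov property at each time $j\ge 0$: on $\{S_j=y,\,j<T_K\}$ with $y\notin K$, the residual probability of ever reaching $K$ is $q_K(y)$. The boundary term $\E_x[h(S_{T_K});\,T_K<\infty]\le h_{\max}\,q_K(x)=O(\J(x)^{-2})$ needed for part (1) is of lower order than the target bound.

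For part (1), I would use $G_K(x,y)\le g(x,y)$ together with $q_K(y)\le \sum_{z\in K}g(y,z)\le C\,g(y,0)$ (valid in $\Z^4$ since $g(y,z)$ is uniformly comparable to $g(y,0)$ as $z$ ranges over the finite set $K$). The estimate then reduces to controlling
\[
  \sum_{y}\tilde h(y)\,g(x,y)\,g(y,0)\qquad\text{with}\qquad \tilde h(y)\le C(\log(2+\|y\|))^k/(1+\|y\|)^{2},
\]
which is precisely the form handled by Lemma~\ref{bdhgg}. Taking $a=x$ and $b=0$ gives $M=\|x\|$ and $m=0$, so \eqref{hgg4d} produces an extra $\log(\|x\|+1)$ factor and yields exactly \eqref{bdsumh}.

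For part (2), the same decomposition must be sharpened into an asymptotic equivalent. Lemma~\ref{fargreen} allows one to replace $G_K(x,y)$ by $g(x,y)$ at cost $O(\J(x)^{-2}\sum_y h(y)\J(y)^{-4})=O(\J(x)^{-2})$, negligible after dividing by $q_K(x)\sim \c_4\cap(K)\J(x)^{-2}$ (from \eqref{hittingprobabS-2}). Substituting the asymptotics $q_K(y)=\c_4\cap(K)\J(y)^{-2}(1+o_y(1))$ and $h(y)\sim c_h(\log\J(y))^k/\J(y)^2$ reduces the claim to showing
\[
  \J(x)^2\sum_{y\in\Z^4}\frac{(\log\J(y))^k}{\J(y)^4}\,g(x,y)\;\sim\;\frac{4}{k+1}(\log\J(x))^{k+1}.
\]
The dominant contribution comes from the annular range $1\le \J(y)\le \J(x)/2$, where $g(x,y)\sim \c_4/\J(x)^{2}$ is essentially constant. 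A polar Riemann-sum approximation then exploits that the surface of the $\J$-sphere of radius $r$ in $\R^4$ has area $32\pi^2 r^3\sqrt{\det\Gamma}$; combined with $\c_4=1/(8\pi^2\sqrt{\det\Gamma})$, this produces $\c_4\cdot 32\pi^2\sqrt{\det\Gamma}=4$ times the logarithmic integral $\int_1^{\J(x)/2}(\log r)^k\,r^{-1}\,dr=(\log\J(x))^{k+1}/(k+1)+O((\log\J(x))^k)$.

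The main technical obstacle is rigorous error control in the Riemann-sum approximation. One splits the sum over $y$ into three ranges: the core $\{\J(y)\le R_0\}$ (finitely many terms, contributing $O(\J(x)^{-2})$), the bulk $\{R_0<\J(y)\le \eta\J(x)\}$ (where $g(x,y)\sim \c_4\J(x)^{-2}$ uniformly by \eqref{green} and the Riemann sum matches the integral up to $O((\log\J(x))^k)$), and the tail $\{\J(y)>\eta\J(x)\}$ (bounded using $g(x,y)\le C/\J(y-x)^{2}$ and the decay of $(\log\J(y))^k/\J(y)^4$, yielding at most $(\log\J(x))^k/\J(x)^2$). Letting $\|x\|\to\infty$ and then $R_0\to\infty$, $\eta\to 0$ isolates the leading $(\log\J(x))^{k+1}$ contribution with constant $4/(k+1)$, while the $o_y(1)$ corrections in $h$ and $q_K$ are absorbed into a multiplicative $(1+o_x(1))$ factor once $R_0$ is chosen large.
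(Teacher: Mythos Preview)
Your argument is correct and follows a genuinely different route from the paper's. You express $\E_x[\sum_{j<T_K}h(S_j);T_K<\infty]$ via the killed Green function as $\sum_y h(y)G_K(x,y)q_K(y)$, bound or replace $G_K$ by $g$ (using Lemma~\ref{bdhgg} for part~(1), Lemma~\ref{fargreen} for part~(2)), and then read off the asymptotic by a direct lattice Riemann sum, with the constant $4/(k+1)$ emerging from the identity $\c_4\cdot 32\pi^2\sqrt{\det\Gamma}=4$. The paper instead decomposes the path at the last exit time $\sigma_\gamma$ from $\B_\gamma$ (with $\gamma=\J(x)/(\log\J(x))^{1/4}$), time-reverses the inner piece to reduce to $\E_y[\sum_{k<\tau_\gamma}h(S_k)]$ for $y\in K$, and evaluates this via a separate concentration result (Lemma~\ref{sumhmomp}) proved through a KMT-type coupling of the radial part of the walk to a four-dimensional Bessel process. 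Your route is more elementary and self-contained for this lemma, and makes the origin of the constant transparent as a volume computation; the paper's detour, however, is not gratuitous, since the concentration estimates \eqref{cvgPsumh} and \eqref{sumu} obtained along the way are reused in the proof of Lemma~\ref{mainHx}, where one needs not merely the expectation but genuine concentration of $\sum_{\tau_r\le k<\tau_\gamma}1/(\J(S_k)^2\log\J(S_k))$. One small point: in your tail bound for $\{\J(y)>\eta\J(x)\}$, the inequality $g(x,y)\le C/\J(y-x)^2$ is singular near $y=x$, so the claimed $O((\log\J(x))^k/\J(x)^2)$ bound really comes from combining $\sum_{\J(y-x)\le\J(x)}g(x,y)=O(\J(x)^2)$ with $h(y)q_K(y)=O((\log\J(x))^k/\J(x)^4)$ in that region; this is straightforward but worth making explicit.
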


\section{Occupation times when $d\ge5$: proof of Theorem \ref{highd}}\label{hd}

In this section, we assume that $\mu$ has zero mean and satisfies \eqref{eq:weakLd}, 
with $d\ge5$. We aim to study the asymptotics of $(L_K, Z_\T(K))$ for a finite subset $K\subset\Z^d$ in high dimensions. In fact, it suffices to study 
\[
\E_x[f(L_K)\ind{L_K\ge 1}]
\]
for any continuous and bounded function $f\colon\R\to\R$. The joint convergence in law of $(L_K, Z_\T(K))$ follows in the same way by considering 
\[
\E_x[f(L_K, Z_\T(K))\ind{L_K\ge 1}]
\]
for any continuous and bounded function $f\colon\R^2\to\R$.

First, by the many-to-one Lemma \ref{Manyto1}, one sees that
\begin{equation}\label{meanL}
\E_x[L_K\ind{L_K\ge 1}]=\E_x[L_K]=\P_x(T_K<\infty)=q_K(x).
\end{equation}
Observe that for $x\notin K$,
\[
\E_x[f(L_K)\ind{L_K\ge 1}]=\E_x\left[\sum_{n=1}^\infty \sum_{|u|=n}\ind{u\in \L_K}\frac{f(L_K)\ind{L_K\ge 1}}{L_K}\right]
\]
By change of measure and Proposition \ref{spine}, we get that
\begin{align*}
&\E_x[f(L_K)\ind{L_K\ge 1}]=\sum_{n=1}^\infty\E_{\Q_x}\left[\frac{\sum_{|u|=n}\ind{u\in\L_K}}{Z_n}\E_x\left[\frac{f(L_K)\ind{L_K\ge1 }}{L_K}\Big\vert \mathcal{F}_n\right]\right]\\
=&\sum_{n=1}^\infty\E_{\Q_x^*}\left[\sum_{|u|=n}\ind{w_n=u}\ind{u\in\L_K}\E_{x}\left[\frac{f(L_K)\ind{L_K\ge1 }}{L_K}\Big\vert \mathcal{F}_n\right]\right]\\
=&\sum_{n=1}^\infty\E_{\Q_x^*}\left[\sum_{|u|=n}\ind{w_n=u}\ind{w_n\in\L_K}\E_{\Q^*_x}\left[\frac{f(L_K)\ind{L_K\ge1 }}{L_K} \Big\vert \widetilde{\mathcal{F}}_n \right]\right]\\
=&\sum_{n=1}^\infty\E_{\Q_x^*}\left[\ind{w_n\in\L_K}\E_{\Q^*_x}\left[\frac{f(L_K)\ind{L_K\ge1 }}{L_K}\Big\vert \widetilde{\mathcal{F}}_n\right]\right]\\
=&\sum_{n=1}^\infty\E_{\Q_x^*}\left[\ind{w_n\in\L_K}\frac{f(L_K)\ind{L_K\ge1 }}{L_K}\right].
\end{align*}
Here we define \(\widetilde{\mathcal F}_n:=\sigma(\mathcal{F}_n, (w_k, S_{w_k})_{k\le n})\), which augments \(\mathcal{F}_n\) with the spine information up to generation \(n\). And we have used the fact that, on the event \(\{w_n=u\}\),
\[
\mathbf{1}_{\{u\in\L_K\}}\E_x\!\left[\frac{f(L_K)\mathbf{1}_{\{L_K\ge 1\}}}{L_K}\,\middle|\, \mathcal{F}_n\right]
=
\mathbf{1}_{\{w_n\in\L_K\}}\E_{\Q^*_x}\!\left[\frac{f(L_K)\mathbf{1}_{\{L_K\ge 1\}}}{L_K}\,\middle|\, \widetilde{\mathcal{F}}_n\right],
\]
as on both sides, $\L_K$ does not contain any descendant of $u=w_n$. 

Note that $w_n\in\L_K$ means that
\[
\forall 0\le k<n, S_{w_k}\notin K, \; \textrm{ and } S_{w_n}\in K.
\]
We set $T_K(w):=\inf\{n\ge0 \colon S_{w_n}\in K\}$. For any $v\in\T$, we say that $v\le \L_K$ if either $v\in\L_K$, or for all $\rho\le w\le v$, $S_w\notin K$. We define
\[
L_K(v):=\sum_{w\in\T_v}\ind{w\in \L_K}, \textrm{ if } v\le \L_K.
\]
We thus get that
\begin{align*}
&\E_x[f(L_K)\ind{L_K\ge 1}]=\E_{\Q^*_x}\left[\ind{T_K(w)<\infty}\frac{f(1+\sum_{k=1}^{T_K(w)}\sum_{u\in\L(w_k)\cup \rt(w_k)}L_K(u))}{1+\sum_{k=1}^{T_K(w)}\sum_{u\in\L(w_k)\cup \rt(w_k)}L_K(u)}\right]\\
=&\sum_{y\in K}\sum_{n=0}^\infty\E_{\Q^*_x}\left[\ind{T_K(w)=n, S_{w_n}=y}\frac{f(1+\sum_{k=1}^{n}\sum_{u\in\L(w_k)\cup \rt(w_k)}L_K(u))}{1+\sum_{k=1}^{n}\sum_{u\in\L(w_k)\cup \rt(w_k)}L_K(u)}\right].
\end{align*}
Let us denote by $\bar\mu$ the probability distribution on $\Z^d$ such that for all $x\in \Z^d$, $\bar\mu(\{x\})=\mu(\{-x\})$.
By time-reversal, it follows that 
\begin{align*}
\E_x[f(L_K)\ind{L_K\ge 1}]=&\sum_{y\in K}\sum_{n=0}^\infty \bar\E_y\left[\ind{S_n=x, T^+_K>n}\frac{f(1+\sum_{k=1}^n\sum_{j=1}^{B_k}L_K^{k,j}(S_k+X_{k,j}))}{1+\sum_{k=1}^n\sum_{j=1}^{B_k}L_K^{k,j}(S_k+X_{k,j})}\right],
\end{align*}
where, under $\bar \P_y$, 
\begin{itemize}
  \item $(S_n)_{n\ge0}$ is a random walk started from $y$ with i.i.d.~increments distributed as $\bar\mu$,
  \item $(B_k)_{k\ge1}$ is a sequence of i.i.d.~random variables distributed as $L(w_1)+R(w_1)$ under $\Q^*_x$,
  \item $\{X_{k,j}; k,j\ge1\}$ are i.i.d.~random variables distributed as $\mu$,
  \item for any $z\in\Z^d$, $\{L_K^{k,j}(z); k,j \geq 1\}$ are i.i.d.~random variables distributed as $L_K$ under $\P_z$,
  \item $(B_k)_{k\ge1}$, $(S_n)_{n\ge0}$, $\{X_{k,j}; k,j\ge1\}$ and $\{L_K^{k,j}; k,j \geq 1\}$ are independent.
\end{itemize}   
It follows that, given $(B_k)_{k\ge1}$, $(S_n)_{n\ge0}$ and $\{X_{k,j}; k,j\ge1\}$, the family of random variables $L_K^{k,j}(S_k+X_{k,j})$, $k,j\ge1$ are independent and distributed as $L_K$ under $\P_{S_k+X_{k,j}}$.

To simplify notation, we set 
\[\gamma:=\gamma_x:=\frac{\J(x)}{(\log \J(x))^{\frac14}}.\] 
Recall that $\tau_\gamma=\inf\{n\ge0\colon S_n\notin \B_\gamma\}$. When $\J(x)$ is sufficiently large, since $x\notin \B_\gamma$, we have
\begin{align*}
&\E_x[f(L_K)\ind{L_K\ge 1}]\\
=&\sum_{y\in K}\sum_{n=0}^\infty\sum_{\ell=0}^n \bar \E_y\left[\ind{S_n=x, T^+_K>n, \tau_\gamma=\ell}\frac{f(1+\sum_{k=1}^n\sum_{j=1}^{B_k}L_K^{k,j}(S_k+X_{k,j}))}{1+\sum_{k=1}^n\sum_{j=1}^{B_k}L_K^{k,j}(S_k+X_{k,j})}\right]\\
=&\sum_{y\in K}\sum_{m=0}^\infty\sum_{\ell=0}^\infty \bar \E_y\left[\ind{S_{\ell+m}=x, T^+_K>m+\ell, \tau_{\gamma}=\ell}\frac{f(1+\sum_{k=1}^{\ell+m}\sum_{j=1}^{B_k}L_K^{k,j}(S_k+X_{k,j}))}{1+\sum_{k=1}^{\ell+m}\sum_{j=1}^{B_k}L_K^{k,j}(S_k+X_{k,j})}\right].
\end{align*}
For any integers $1\le m_1\le m_2\le \ell+m$, let us write $\Sigma_{[m_1, m_2]}$ for the sum $\sum_{k=m_1}^{m_2}\sum_{j=1}^{B_k}L_K^{k,j}(S_k+X_{k,j})$. In the previous expectation, we claim that
\[
\Sigma_{[1,\ell+m]}:=\sum_{k=1}^{\ell+m}\sum_{j=1}^{B_k}L_K^{k,j}(S_k+X_{k,j})
\]
can be approximated by $\Sigma_{[1,\ell-1]}$.
In fact, we will show that
\begin{equation}\label{smallpart5d}
\Sigma_\eqref{smallpart5d}:=\sum_{y\in K}\sum_{m=0}^\infty\sum_{\ell=0}^\infty \bar \E_y\left[\ind{S_{\ell+m}=x, T^+_K>m+\ell, \tau_{\gamma}=\ell}\ind{\Sigma_{[\ell,\ell+m]}\ge 1}\right]=o_x(1) \|x\|^{2-d}.
\end{equation}

Let us postpone the proof of \eqref{smallpart5d} until the end of this section, and continue the study of $\E_x[f(L_K)\ind{L_K\ge 1}]$. Recall that the sum $\Sigma_{[\ell,\ell+m]}$ takes its values in $\n$.
On the event $\{\Sigma_{[\ell,\ell+m]}\ge 1\}$, $\frac{|f(1+t)|}{1+t}\le \sup_t|f(t)|=:\|f\|_\infty<\infty$. On the contrary, if $\Sigma_{[\ell,\ell+m]}$ is equal to 0, $\Sigma_{[1,\ell+m]}$ coincides with $\Sigma_{[1,\ell-1]}$. Consequently,
\begin{align*}
&\E_x[f(L_K)\ind{L_K\ge 1}]\\
=\,&o_x(1) \|x\|^{2-d}+\sum_{y\in K}\sum_{m=0}^\infty\sum_{\ell=0}^\infty\bar \E_y\left[\ind{S_{\ell+m}=x, T^+_K>m+\ell, \tau_{\gamma}=\ell}\frac{f(1+\Sigma_{[1,\ell-1]})}{1+\Sigma_{[1,\ell-1]}} \ind{ \Sigma_{[\ell,\ell+m]} = 0 } \right]\\
=\,&o_x(1) \|x\|^{2-d}+\sum_{y\in K}\sum_{m=0}^\infty\sum_{\ell=0}^\infty \bar \E_y\left[\ind{S_{\ell+m}=x, T^+_K>m+\ell, \tau_{\gamma}=\ell}\frac{f(1+\Sigma_{[1,\ell-1]})}{1+\Sigma_{[1,\ell-1]}}\right].
\end{align*}
By the strong Markov property of the random walk $(S_n)_{n\geq 0}$ at $\tau_\gamma$, we have
\begin{align}
&\E_x[f(L_K)\ind{L_K\ge 1}] = o_x(1) \|x\|^{2-d} \nonumber\\
&\quad +\sum_{y\in K}\sum_{z\in \B_\gamma^c}\left(\bar \E_y\left[\frac{f(1+\Sigma_{[1,\tau_\gamma-1]})}{1+\Sigma_{[1,\tau_\gamma-1]}}\ind{S_{\tau_\gamma}=z, \tau_\gamma<T^+_K}\right]\sum_{m=0}^\infty\bar \P_z(S_m=x, T_K>m)\right). \label{eq:sum-d5}
\end{align}

Note that for any $z\in  \B_\gamma^c$ such that $\gamma<\J(z)\leq 2\gamma$, we deduce from Lemma \ref{fargreen} that
\[
\sum_{m=0}^\infty\bar \P_z(S_m=x, T_K>m)=g(x,z)+o_x(1)\frac{1}{\J(x)^{d-2}}.
\]
Together with \eqref{green-not-finite-support}, we see that
\[
\sum_{m=0}^\infty\bar \P_z(S_m=x, T_K>m)=(1+o_x(1))\frac{\c_d}{\J(x)^{d-2}}.
\]
Moreover, this asymptotic is uniform for all such $z\in \B_{2\gamma}\backslash \B_{\gamma}$.
Thus, 
\begin{align}
&\sum_{z\in \B_{2\gamma}\backslash \B_{\gamma}}\left(\bar \E_y\left[\frac{f(1+\Sigma_{[1,\tau_\gamma-1]})}{1+\Sigma_{[1,\tau_\gamma-1]}}\ind{S_{\tau_\gamma}=z, \tau_\gamma<T^+_K}\right]\sum_{m=0}^\infty\bar \P_z(S_m=x, T_K>m)\right) \nonumber \\ 
=\;&(1+o_x(1))\frac{\c_d}{\J(x)^{d-2}}\sum_{z\in \B_{2\gamma}\backslash \B_{\gamma}}\bar \E_y\left[\frac{f(1+\Sigma_{[1,\tau_\gamma-1]})}{1+\Sigma_{[1,\tau_\gamma-1]}}\ind{S_{\tau_\gamma}=z, \tau_\gamma<T^+_K}\right].\label{eq:sum-annu}
\end{align}

On the other hand, we have
\begin{align}
&\left|\sum_{z\in \B_{2\gamma}^c}\left(\bar \E_y\left[\frac{f(1+\Sigma_{[1,\tau_\gamma-1]})}{1+\Sigma_{[1,\tau_\gamma-1]}}\ind{S_{\tau_\gamma}=z, \tau_\gamma<T^+_K}\right]\sum_{m=0}^\infty\bar \P_z(S_m=x, T_K>m)\right)\right|\nonumber \\ 
\leq \;& \sum_{z\in \B_{2\gamma}^c}\bar \E_y\left[\frac{|f(1+\Sigma_{[1,\tau_\gamma-1]})|}{1+\Sigma_{[1,\tau_\gamma-1]}}\ind{S_{\tau_\gamma}=z, \tau_\gamma<T^+_K}\right]\sum_{m=0}^\infty\bar \P_z(S_m=x)\nonumber\\
\leq \;& \|f\|_\infty\sum_{z\in \B_{2\gamma}^c}\bar\P_y(S_{\tau_\gamma}=z, \tau_\gamma<T^+_K) g(x,z)\nonumber \\ 
= \; & \|f\|_\infty\bar \E_y\left[g(x,S_{\tau_\gamma})\ind{\J(S_{\tau_\gamma})>2\gamma, \tau_\gamma<T^+_K}\right].\label{eq:sum-outside-2gamma}
\end{align}
Let us fix some $\varepsilon\in (0,1)$. When $\J(x)$ is sufficiently large so that $\varepsilon\J(x)>2\gamma$, 
\begin{align}
   & \bar \E_y\left[g(x,S_{\tau_\gamma})\ind{\J(S_{\tau_\gamma})>2\gamma, \tau_\gamma<T^+_K}\right]\nonumber\\ 
   \leq\; &\bar \E_y\left[g(x,S_{\tau_\gamma})\ind{\J(S_{\tau_\gamma})>\varepsilon\J(x)}\right]+\bar \E_y\left[g(x,S_{\tau_\gamma})\ind{2\gamma<\J(S_{\tau_\gamma})\leq \varepsilon\J(x)}\right].\label{eq:sum-outside-2gamma+}
\end{align}
Since the Green function $g$ is bounded, we set $\|g\|_\infty:=\sup_{x,z\in\Z^d}g(x,z)<\infty$. Then applying \eqref{eq:bigjump}, we get 
\begin{align*}
  \bar \E_y\left[g(x,S_{\tau_\gamma})\ind{\J(S_{\tau_\gamma})>\varepsilon\J(x)}\right]\leq & \|g\|_\infty \cdot\bar\P_y(\J(S_{\tau_\gamma})>\varepsilon\J(x))\\ 
  \leq & \|g\|_\infty \cdot \bar\E_y[\tau_\gamma] \cdot \P(\J(X_1)>\varepsilon\J(x)-\gamma)\\ 
  \leq & \|g\|_\infty \cdot \bar\E_y[\tau_\gamma] \cdot \P(\J(X_1)>\frac{\varepsilon}{2}\J(x))
\end{align*}
Under the assumption \eqref{eq:weakLd}, $\P(\J(X_1)>\frac{\varepsilon}{2}\J(x))$ is bounded by $(\frac{\varepsilon}{2}\J(x))^{-d}$ up to a multiplicative constant. 
As the subset $K$ is finite, there exists some constant $C>0$ such that $\sup_{y\in K} \bar\E_y[\tau_\gamma]\leq C\gamma^2$ (see Proposition 2.4.5 and Exercise 2.7 in \cite{Lawler-Limic}) . Putting them together, we see that 
\[
  \bar \E_y\left[g(x,S_{\tau_\gamma})\ind{\J(S_{\tau_\gamma})>\varepsilon\J(x)}\right]=o_x(1)\|x\|^{2-d}.
\]
For the second expectation on the right hand side of \eqref{eq:sum-outside-2gamma+}, we use the bound $g(0,x)\leq C\|x\|^{2-d}$ to deduce that
\[
  \bar \E_y\left[g(x,S_{\tau_\gamma})\ind{2\gamma<\J(S_{\tau_\gamma})\leq \varepsilon\J(x)}\right]\leq C_\varepsilon \|x\|^{2-d} \bar\P_y(2\gamma<\J(S_{\tau_\gamma})\leq \varepsilon\J(x)),
\]
where the finite constant $C_\varepsilon$ depends on $\varepsilon$. By \eqref{eq:bigjump-asymp}, it follows immediately that 
\[
  \bar \E_y\left[g(x,S_{\tau_\gamma})\ind{2\gamma<\J(S_{\tau_\gamma})\leq \varepsilon\J(x)}\right]=o_x(1)\|x\|^{2-d}.
\]
As a result, the last term in \eqref{eq:sum-outside-2gamma} is $o_x(1)\|x\|^{2-d}$. Putting this and \eqref{eq:sum-annu} into \eqref{eq:sum-d5}, we deduce that
\begin{align}
&\E_x[f(L_K)\ind{L_K\ge 1}]= o_x(1) \|x\|^{2-d} \nonumber\\
&+ (1+o_x(1))\frac{\c_d}{\J(x)^{d-2}}\sum_{y\in K} \sum_{z\in \B_{2\gamma}\backslash \B_{\gamma}}\bar \E_y\left[\frac{f(1+\Sigma_{[1,\tau_\gamma-1]})}{1+\Sigma_{[1,\tau_\gamma-1]}}\ind{S_{\tau_\gamma}=z, \tau_\gamma<T^+_K}\right].\label{eq:E-annu}
\end{align}
Notice that 
\[
  \left|\sum_{y\in K}\sum_{z\in \B_{2\gamma}^c}\bar \E_y\left[\frac{f(1+\Sigma_{[1,\tau_\gamma-1]})}{1+\Sigma_{[1,\tau_\gamma-1]}}\ind{S_{\tau_\gamma}=z, \tau_\gamma<T^+_K}\right]\right|\leq \|f\|_\infty \sum_{y\in K}\P_y(\J(S_{\tau_\gamma})>2\gamma).
\]
Since $|K|<\infty$, we can apply \eqref{eq:bigjump-asymp} to see that 
\[
  \sum_{y\in K}\sum_{z\in \B_{2\gamma}^c}\bar \E_y\left[\frac{f(1+\Sigma_{[1,\tau_\gamma-1]})}{1+\Sigma_{[1,\tau_\gamma-1]}}\ind{S_{\tau_\gamma}=z, \tau_\gamma<T^+_K}\right]=o_x(1).
\]
Consequently, we derive from \eqref{eq:E-annu} that
\begin{multline}\label{cvghighd}
\E_x[f(L_K)\ind{L_K\ge 1}] \\
=o_x(1) \|x\|^{2-d}+(1+o_x(1))\frac{\c_d}{\J(x)^{d-2}}\sum_{y\in K}\bar \E_y\left[\frac{f(1+\Sigma_{[1,\tau_\gamma-1]})}{1+\Sigma_{[1,\tau_\gamma-1]}}\ind{ \tau_\gamma<T^+_K}\right].
\end{multline}

Then observe that for any $y\in K$, as $\gamma\to\infty$, $\bar\P_y$-a.s.~$\tau_\gamma\to\infty$ and $\Sigma_{[1,\tau_\gamma-1]}$ converges towards 
\begin{equation}\label{backwardspine}
\Sigma_\infty:=\sum_{k=1}^\infty\sum_{j=1}^{B_k}L_K^{k,j}(S_k+X_{k,j}),
\end{equation}
which is finite $\bar \P_y$-a.s. In fact, recalling $\bar\E_y[B_k]=\sigma^2$, by \eqref{meanL} we have
\begin{align*}
\bar\E_y[\Sigma_\infty]=&\sum_{k=1}^\infty \bar\E_y\left[\sum_{j=1}^{B_k}L_K^{k,j}(S_k+X_{k,j})\right]\\
=&\sigma^2\sum_{k=1}^\infty \bar\E_y\left[\sum_{e\in \Z^d} q_K(S_k+e)\mu(e)\right]=\sigma^2\sum_{w\in\Z^d} h(w)g(w,y),
\end{align*}
where $h(w):=\sum_{e\in \Z^d} q_K(w+e)\mu(e)$. According to \eqref{eq:weakLd} and \eqref{hittingprobabS-2}, one can find a constant $C>0$, such that 
\begin{equation}\label{eq:h-bound}
h(x) \le \sum_{\|e\|>\frac{\|x\|}{2}} \mu(e) + \sum_{\|e\|\leq \frac{\|x\|}{2}} q_K(x+e)\mu(e) \le\frac{C}{1+\|x\|^{d-2}}.
\end{equation}
Combined with \eqref{green-not-finite-support}, one sees that as $K$ is finite and $d\ge5$, 
\[
\sup_{y\in K}\sum_{w\in\Z^d} h(w)g(w,y)<\infty.
\]
This implies that $\Sigma_\infty$ is finite $\bar\P_y$-a.s. Therefore, by dominated convergence theorem, for any $y\in K$.
\[
\bar\E_y\left[\frac{f(1+\Sigma_{[1,\tau_\gamma-1]})}{1+\Sigma_{[1,\tau_\gamma-1]}}\ind{ \tau_\gamma<T^+_K}\right]\xrightarrow{\|x\|\to\infty} \bar \E_y\left[\frac{f(1+\Sigma_\infty)}{1+\Sigma_\infty}\ind{T_K^+=\infty}\right].
\]
Going back to \eqref{cvghighd}, we obtain that
\begin{align*}
&\E_x[f(L_K)\ind{L_K\ge 1}]\\
=&o_x(1) \|x\|^{2-d}+(1+o_x(1))\frac{\c_d}{\J(x)^{d-2}}\sum_{y\in K}\left(\bar\E_y\left[\frac{f(1+\Sigma_\infty)}{1+\Sigma_\infty}\ind{T_K^+=\infty}\right]+o_x(1)\right).
\end{align*}
In particular, when the function $f$ is constantly equal to 1, we get that
\[
\P_x(L_K\ge 1)=o_x(1) \|x\|^{2-d}+(1+o_x(1))\frac{\c_d}{\J(x)^{d-2}}\sum_{y\in K}\bigg(\bar\E_y\left[\frac{\ind{T_K^+=\infty}}{1+\Sigma_\infty}\right]+o_x(1)\bigg).
\]
In summary, when $\|x\|\to \infty$, 
\begin{align*}
\P_x(L_K\ge1)\sim&\frac{\c_d}{\J(x)^{d-2}}\bigg(\sum_{y\in K}\bar \E_y\left[\frac{\ind{T_K^+=\infty}}{1+\Sigma_\infty}\right]\bigg),\\
\E_x[f(L_K)\ind{L_K\ge 1}]\sim&\frac{\c_d}{\J(x)^{d-2}}\sum_{y\in K}\bigg(\bar\E_y\left[\frac{f(1+\Sigma_\infty)}{1+\Sigma_\infty}\ind{T_K^+=\infty}\right]\bigg).
\end{align*}
As a result, we have established that 
\[
\lim_{\|x\|\to\infty} \E_x[f(L_K)\vert L_K\ge 1] = \frac{\sum_{y\in K}\bar\E_y\left[\frac{f(1+\Sigma_\infty)}{1+\Sigma_\infty}\ind{T_K^+=\infty}\right]}{\sum_{y\in K}\bar\E_y\Big[\frac{\ind{T_K^+=\infty}}{1+\Sigma_\infty}\Big]},
\]
where the limit on the right-hand side defines a probability measure $\nu_{d,K}$ on $\n^*$ as
\begin{equation}\label{limitlaw5d}
 \nu_{d,K}(k)=\frac{\sum_{y\in K}\bar\E_y\left[\frac{\ind{1+\Sigma_\infty=k}}{1+\Sigma_\infty}\ind{T_K^+=\infty}\right]}{\sum_{y\in K}\bar\E_y\Big[\frac{\ind{T_K^+=\infty}}{1+\Sigma_\infty}\Big]}, \quad \forall k\ge 1.
\end{equation}
So we conclude that conditionally on $L_K\ge 1$ (i.e.~$Z_\T(K)\ge 1$), $L_K$ converges in law towards $\nu_{d,K}$. 

\begin{remark}
Our previous arguments recover Theorem 1.1 of \cite{Zhu+}, stated in \eqref{5duK}, under the same assumption on $\mu$. We see further that
\[
\bcap(K)=\sum_{y\in K}\bar\E_y\left[\frac{\ind{T_K^+=\infty}}{1+\Sigma_\infty}\right].
\]
\end{remark}

\begin{remark}\label{joint-limit}
  For the joint convergence in law of $(L_K, Z_\T(K))$, when $\|x\|\to\infty$, by similar arguments we get the limit of $\E_x[f(L_K, Z_\T(K))\vert L_K\ge 1]$ as 
  \begin{equation*} \label{limitlaw5d-withZ}
    \Bigg(\sum_{y\in K}\bar\E_y\left[\frac{\ind{T_K^+=\infty}}{1+\Sigma_\infty}\right]\Bigg)^{-1}\times \sum_{y\in K}\bar\E_y\left[\frac{f(1+\Sigma_\infty, Z_K(y)+\widetilde{\Sigma}_\infty)}{1+\Sigma_\infty}\ind{T_K^+=\infty}\right],
  \end{equation*}
  where 
  \[
    \widetilde{\Sigma}_\infty:=\sum_{k=1}^\infty\sum_{j=1}^{B_k}Z_K^{k,j}(S_k+X_{k,j}).
  \]
  Here, under $\bar\P_y$, 
  \begin{itemize}
  \item $Z_K(x)$ is distributed as $Z_\T(K)$ under $\P_y$;
  \item $(B_k)_{k\ge1}$, $(S_n)_{n\ge0}$ and $\{X_{k,j}; k,j\ge1\}$ are the same as above;
  \item for any $z\in\Z^d$, $\{(L_K^{k,j}(z), Z_K^{k,j}(z)); k,j \geq 1\}$ are i.i.d.~random variables distributed as $(L_K, Z_\T(K))$ under $\P_z$;
  \item $Z_K(x)$, $(B_k)_{k\ge1}$, $(S_n)_{n\ge0}$, $\{X_{k,j}; k,j\ge1\}$ and $\{(L_K^{k,j}, Z_K^{k,j}); k,j \geq 1\}$ are independent.
\end{itemize} 
\end{remark}

To finish the proof of Theorem \ref{highd}, it remains to check \eqref{smallpart5d}.

\begin{proof}[Proof of \eqref{smallpart5d}]
First of all, by Markov's inequality,
\begin{align*}
&\bar\E_y\left[\ind{S_{\ell+m}=x, T^+_K>m+\ell, \tau_{\gamma}=\ell}\ind{\sum_{k=\ell}^{\ell+m}\sum_{j=1}^{B_k}L_K^{k,j}(S_k+X_{k,j})\ge 1}\right]\nonumber\\
\le \;& \bar\E_y\Bigg[\ind{S_{\ell+m}=x, T^+_K>m+\ell, \tau_{\gamma}=\ell}\sum_{k=\ell}^{\ell+m}\sum_{j=1}^{B_k}L_K^{k,j}(S_k+X_{k,j})\Bigg]\nonumber\\
=\;& \bar\E_y\Bigg[\ind{S_{\ell+m}=x, T^+_K>m+\ell, \tau_{\gamma}=\ell}\sum_{k=\ell}^{\ell+m}\sum_{j=1}^{B_k}\E_{S_k+X_{k,j}}[L_K]\Bigg].
\end{align*}
Recalling \eqref{meanL}, we have $\E_{S_k+X_{k,j}}[L_K]=q_K(S_k+X_{k,j})$. Recall also that $\bar\E_y[B_k]=\sigma^2$. It follows that
\begin{align*}
\Sigma_\eqref{smallpart5d}
&\le \sum_{y\in K}\sum_{m=0}^\infty\sum_{\ell=0}^\infty\bar\E_y\Bigg[\ind{S_{\ell+m}=x, T^+_K>m+\ell, \tau_{\gamma}=\ell}\sum_{k=\ell}^{\ell+m}\sigma^2 \sum_{e\in \Z^d} q_K(S_k+e)\mu(e)\Bigg].
\end{align*}
By the strong Markov property at time $\tau_\gamma$ and then by time-reversal, one sees that
\begin{align*}
\Sigma_\eqref{smallpart5d}\le& \,\sigma^2 \sum_{y\in K}\sum_{m=0}^\infty\sum_{z\in \B_\gamma^c}\bar \P_y(\tau_\gamma<T^+_K, S_{\tau_\gamma}=z)\bar\E_z\Bigg[\sum_{k=0}^m\sum_{e\in \Z^d} q_K(S_k+e)\mu(e)\ind{S_m=x}\Bigg]\\
=& \,\sigma^2 \sum_{y\in K}\sum_{z\in \B_\gamma^c}\Bigg(\sum_{m=0}^\infty\E_x\Bigg[\sum_{k=0}^m \sum_{e\in \Z^d} q_K(S_k+e)\mu(e)\ind{S_m=z}\Bigg]\Bigg)\bar\P_y(\tau_\gamma<T^+_K, S_{\tau_\gamma}=z).
\end{align*}
Here we note that 
\[
\sum_{m=0}^\infty\E_x\Bigg[\sum_{k=0}^m \sum_{e\in \Z^d} q_K(S_k+e)\mu(e)\ind{S_m=z}\Bigg]
\le \sum_{w\in \Z^d} h(w) g(x,w) g(w,z),
\]
with $h(w)=\sum_e q_K(w+e)\mu(e)$. 
We have already seen in \eqref{eq:h-bound} that $h$ satisfies the assumption of Lemma \ref{bdhgg} with $k=0$, which implies the existence of a constant $C>1$ such that 
\[
\sum_{w\in \Z^d} h(w) g(x,w) g(w,z)\le C\frac{(1+m)^{4-d}}{(1+M)^{d-2}},
\]
where $M:=\max\{\|x\|, \|z\|, \|x-z\|\}\ge \|x\|$ and $m:=\min\{\|x\|,\|z\|, \|x-z\|\}\le \|x\|$.
\begin{itemize}
  \item If $z\in \B_{2\gamma}\backslash\B_\gamma$, we have $m \ge \gamma$
  for $\|x\|$ sufficiently large. As the dimension $d\ge 5$, it follows that
  \[
  \sum_{m=0}^\infty\E_x\Bigg[\sum_{k=0}^m \sum_{e\in \Z^d} q_K(S_k+e)\mu(e)\ind{S_m=z}\Bigg]=o_x(1) \|x\|^{2-d}.
  \]
  \item Otherwise, if $z\in \B_{2\gamma}^c$, we still have the upper bound
  \[
    \sum_{m=0}^\infty\E_x\Bigg[\sum_{k=0}^m \sum_{e\in \Z^d} q_K(S_k+e)\mu(e)\ind{S_m=z}\Bigg]\leq C\|x\|^{2-d}.
  \]
\end{itemize}
Therefore, we finally obtain that
\begin{eqnarray*}
\Sigma_\eqref{smallpart5d} &\leq& o_x(1) \|x\|^{2-d} \sum_{y\in K}\sum_{z\in \B_{2\gamma}\backslash\B_\gamma}\bar \P_y(\tau_\gamma<T^+_K, S_{\tau_\gamma}=z) \\ 
& & \quad\quad  + C\|x\|^{2-d}\sum_{y\in K}\sum_{z\in \B_{2\gamma}^c}\bar \P_y(\tau_\gamma<T^+_K, S_{\tau_\gamma}=z) \\ 
 &\leq &o_x(1) \|x\|^{2-d} \sum_{y\in K}\bar \P_y(\tau_\gamma<T^+_K)+C\|x\|^{2-d} \sum_{y\in K} \bar\P_y(\J(S_{\tau_\gamma})>2\gamma).
\end{eqnarray*}
Both terms in the last line are $o_x(1) \|x\|^{2-d}$, since $\sum_{y\in K}\bar \P_y(\tau_\gamma<T^+_K)\le |K|$ and $\bar\P_y(\J(S_{\tau_\gamma})>2\gamma)=o_x(1)$ according to \eqref{eq:bigjump-asymp}. The proof of \eqref{smallpart5d} is completed.
\end{proof}

\section{Occupation times in the critical dimension $d=4$}\label{cd}
This section is devoted to the critical dimension $d=4$. Here we always assume \eqref{offspring} and that the jump law $\mu$ is symmetric with its support $\supp(\mu)$ being finite. 
Recall that the hitting probability for the critical branching random walk is given by \cite{Zhu}, which says that for any finite subset $K\subset \Z^4$, as $\|x\|\to\infty$,
\begin{equation}\label{hittingprobab4d}
u_K(x):=\P_x(Z_\T(K)\ge 1)\sim \frac{1}{2\sigma^2\J(x)^{2} \log \J(x)}.
\end{equation}
The arguments in the previous section fail to obtain this asymptotic because $\Sigma_\infty$ in \eqref{backwardspine} becomes a.s.~infinite in $\Z^4$. Let us first review the idea of the proof in \cite{Zhu}. On the one hand, the many-to-one lemma (Lemma \ref{Manyto1}) shows that
\begin{equation}\label{meanZ}
m_K(x):=\E_x[Z_\T(K)]=g(x,K)\sim \frac{\c_4 |K|}{\J(x)^{2}}.
\end{equation}
On the other hand, let $u^*_K$ be the first individual in the lexicographic order that arrives in $K$, then
\begin{align*}
\P_x(Z_\T(K)\ge 1)=&\sum_{n=0}^\infty\E_x\Bigg[ \sum_{|u|=n}\ind{u=u^*_K}\Bigg],\\
\E_x[Z_\T(K)]=&\sum_{n=0}^\infty\E_x\Bigg[ \sum_{|u|=n} \E_x\Big[\ind{u=u^*_K}Z_\T(K)\vert \mathcal{F}_n\Big]\Bigg].
\end{align*}
Recall that for any $v\in\T$, the notation $v< \L_K$ means that for all $\rho\le w\le v$, $S_w\notin K$. By change of measure and Proposition \ref{spine}, one has
\begin{align*}
&\P_x(Z_\T(K)\ge 1)=\sum_{n=0}^\infty\E_{\Q_x}\Bigg[\sum_{|u|=n}\frac{1}{Z_n}\E_x\Big[\ind{u=u^*_K}\vert\mathcal{F}_n\Big]\Bigg]\\
=&\sum_{n=0}^\infty\E_{\Q_x}\Bigg[\sum_{|u|=n}\frac{1}{Z_n}\ind{u\in\L_K}\prod_{k=1}^n\prod_{v\in\L(u_k)}\prod_{w: |w|=n, v\le w}\ind{w<\L_K}\P_{S_w}(Z_\T(K)=0)\Bigg]\\
=&\sum_{n=0}^\infty \E_{\Q^*_x}\Bigg[\sum_{|u|=n}\Q^*_x(w_n=u\vert\mathcal{F}_n)\ind{u\in\L_K}\prod_{k=1}^n\prod_{v\in\L(u_k)}\prod_{w: |w|=n, v\le w}\ind{w<\L_K}(1-u_K(S_w))\Bigg].
\end{align*}
Consequently,
\begin{equation}\label{uK}
u_K(x)=\E_{\Q^*_x}\Bigg[\ind{T_K(w)<\infty}\prod_{k=1}^{T_K(w)}\prod_{v\in \L(w_k)}(1-u_K(S_v))\Bigg].
\end{equation}
In the same spirit, we have
\begin{align*}
&\E_x[Z_\T(K)]=\sum_{n=0}^\infty \E_{\Q^*_x}\left[\ind{w_n=u^*_K}Z_\T(K)\right]\\
=\;&\sum_{n=0}^\infty \E_{\Q^*_x}\Bigg[\ind{w_n=u^*_K} \Bigl(m_K(S_{w_n})+\sum_{k=1}^n \sum_{v\in \rt(w_k)} m_K(S_v)\Bigr)\Bigg]\\
=\;&\E_{\Q^*_x}\Bigg[\ind{T_K(w)<\infty}\prod_{k=1}^{T_K(w)}\prod_{v\in \L(w_k)}(1-u_K(S_v))\times \bigg( m_K(S_{T_K(w)})+\sum_{k=1}^{T_K(w)}\sum_{v\in \rt(w_k)} m_K(S_v) \bigg)\Bigg].
\end{align*}
In fact, it is shown in \cite{Zhu} that under $\Q^*_x$, the sum 
\[
	m_K(S_{T_K(w)})+ \sum_{k=1}^{T_K(w)} \sum_{v\in \rt(w_k)} m_K(S_v)
\]
can be approximated by $2\sigma^2\c_4|K|\log \J(x)$ as $\|x\|\to\infty$. This leads to 
\[
m_K(x)=\E_x[Z_\T(K)]\sim u_K(x) 2\sigma^2\c_4|K|\log \J(x).
\]
Then \eqref{hittingprobab4d} follows immediately from \eqref{meanZ}. 

In order to study the asymptotic law of $Z_\T(K)$ (or $L_K$) under $\P_x(\cdot\vert Z_\T(K)\ge 1)$, one possible way is to calculate the moments
$\E_x[Z_\T(K)^j\vert Z_\T(K)\ge 1]$ of all integer orders $j\geq 1$. This can be done in a recursive way under the extra assumption that $\sup_{j\ge1}\sum_{k}k^j p_k<\infty$. Such results will be stated in Proposition \ref{moments}, whose proof is postponed to Appendix \ref{AppendixB}.

Our main idea of the proof is however inspired by \cite{Geiger}. For all individuals in $\L_K$ (or located in $K$), we consider their most recent common ancestor $\A_x(K)$. For convenience, we write its spatial position $H_x:=S_{\A_x(K)}\in \Z^d$.
Let $\N_x(K)$ be the set of children of $\A_x(K)$ which have descendants arriving in $K$. By the definition of $\A_x(K)$ as the most recent common ancestor, $\#\N_x(K)\ge 2$ conditioned on $L_K\ge 2$. In particular, if $L_K=1$, then $\A_x(K)\in\L_K$ and $\N_x(K)$ is the singleton $\{\A_x(K)\}$. 

\begin{figure}[!htbp]
 \begin{center}
 \includegraphics[width=0.4\textwidth]{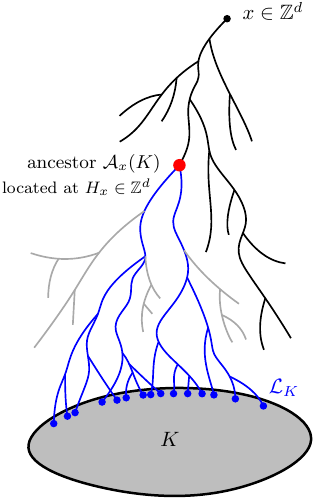}
 \caption{\textsf{The most recent common ancestor $\A_x(K)$ and its spatial location $H_x$.} \label{fig-reduced_tree}}
 \end{center}
\end{figure}

 For any $u\in \N_x(K)$, let $X_u=S_u-H_x$ represent its jump from $H_x$ and let $\T_u$ represent the subtree rooted at $u$. Then by regrouping the individuals in $\{u\in\T\colon S_u\in K\}$ according to their ancestors in $\N_x(K)$, we have
\[
Z_\T(K)=\sum_{u\in \N_x(K)} Z_{\T_u}(K).
\]
Under the conditional law $\P_x(\cdot\vert Z_\T(K)\ge 1)$, given $\{S_u=H_x+X_u, u\in \N_x(K)\}$, $Z_{\T_u}(K), u\in \N_x(K)$ are independent and distributed as $\P_{S_u}(Z_\T(K)\in \cdot\vert Z_\T(K)\ge 1)$. Then after suitable renormalisation, we get
\begin{equation}\label{eq:eq-dis-approxi}
\frac{Z_\T(K)}{\log \J(x)}=\frac{\log \J(H_x)}{\log \J(x)}\sum_{u\in \N_x(K)}\frac{Z_{\T_u}(K)}{\log \J(H_x)}.
\end{equation}
Later in Proposition \ref{MRCA}, we will show that under $\P_x(\cdot\vert Z_\T(K)\ge 1)$, 
 \[
 \#\N_x(K)\overset{\mathrm{(d)}}{\longrightarrow} 2 \quad \mbox {and} \quad \frac{\log \J(H_x)}{\log \J(x)}\overset{\mathrm{(d)}}{\longrightarrow} U
 \]
 in distribution as $\|x\|\to \infty$, where $U$ has uniform distribution on $[0,1]$. So, we will be able to compare the previous identity \eqref{eq:eq-dis-approxi} with the following equation in distribution:
 \[
 Y\overset{\mathrm{(d)}}{=}U(Y_1+Y_2)
 \]
 with $Y_1$ and $Y_2$ being i.i.d.~copies of $Y$ and independent of $U$. The unique non-negative solution of this equation is given by the family of exponential distributions (see \cite{KS}). 
 More precisely, we will check that the $L_2$-Wasserstein distance between $\frac{Z_\T(K)}{\log \J(x)}$ and $Y$ goes to zero where $Y$ has exponential distribution with a suitable parameter. This concludes our Yaglom-type theorem \ref{4d}. We emphasize that in order to use the $L_2$-Wasserstein distance, we need a finite second moment of $Z_\T(K)$. This is indeed the case under our assumption \eqref{offspring}.
 
In the next subsection, we first compute the moments of $L_K$ and of $Z_\T(K)$ in a recursive way.
 
\subsection{Moments of $L_K$ and $Z_\T(K)$} $ $

In Proposition 6.3 of \cite{AHJ21}, using a diagrammatic expansion of moments, Angel, Hutchcroft and J\'arai have proved (for simple random walk jump law) that if there exist some positive constants $c$ and $C$ such that $p_k\leq Ce^{-ck}$ for every $k\geq 1$, then for every $j\ge1$ and for every $x\in\Z^4$, 
\[
(\C_-)^j j!\frac{(j+\log \J(x))^{j-1}}{(1+\J(x))^{2}}\le \E_x[Z_\T(0)^j]\le (\C_+)^j j!\frac{(j+\log \J(x))^{j-1}}{(1+\J(x))^{2}},
\]
with some constants $0<\C_-\le\C_+<\infty$. We manage to obtain the following asymptotics for all moments of $L_K$ and $Z_\T(K)$. Our arguments are mainly inspired by the work of Fleischman \cite{Fle78}. The proof is postponed to Appendix \ref{AppendixB}.

\begin{proposition}\label{moments}
Assume that the jump law $\mu$ is symmetric and has finite support. For any integer $j\ge1$, if $\sum_{k=0}^\infty k^j p_k<\infty$, then as $\|x\|\to\infty$, we have
\begin{align}
\E_x[L_K^j]&\sim u_K(x) \left(2  \c_4 \sigma^2 \cap(K) \log \J(x)\right)^j j!,\label{momL}\\
\E_x[Z_\T(K)^j]&\sim u_K(x) \left(2\c_4\sigma^2 |K|\log \J(x)\right)^j j!.\label{momZ}
\end{align}
\end{proposition}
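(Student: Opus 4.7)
The plan is to prove both asymptotics by induction on $j$, using the branching recursion for moments of the CBRW. Writing $\phi_j(x) := \E_x[Z_\T(K)^j]$ and using $Z_\T(K)=\ind{x\in K}+\sum_{i=1}^N Z^i(K)$, where conditionally on $N$ and the children's positions the $Z^i(K)$ are independent copies of $Z_\T(K)$ starting from $x+X_i$, I would expand $(\ind{x\in K}+\sum_i Z^i)^j$ by the multinomial theorem and take expectations to obtain a recursion of the form
\[
(I-P)\phi_j(x) = \ind{x\in K}\,U_j(x) + \sum_{r\geq 2}\sum_{\substack{j_1+\cdots+j_r=j\\ j_i\geq 1}} c_{r,(j_i)}\prod_{i=1}^r P\phi_{j_i}(x),
\]
with coefficients $c_{r,(j_i)}$ expressible in terms of factorial moments of $\{p_k\}$, all finite under the assumption $\sum_k k^j p_k<\infty$. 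Inverting via the Green function gives $\phi_j(x)=\sum_y g(x,y)\,S_j(y)$, where $S_j$ denotes the right-hand side. The base case $j=1$ reduces to $\phi_1(x)=g(x,K)\sim \c_4|K|/\J(x)^2$ from \eqref{meanZ}, which combined with \eqref{hittingprobab4d} matches \eqref{momZ}.

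For the induction step, assume $\phi_i(x)\sim A_i(\log\J(x))^{i-1}/\J(x)^2$ with $A_i:=(2\c_4\sigma^2|K|)^i\,i!/(2\sigma^2)$ for all $i<j$. Since $\mu$ has finite support, $P\phi_i$ and $\phi_i$ share the same asymptotics at spatial infinity. In the source $S_j$, a split into $r\geq 3$ nonzero parts decays pointwise like $(\log\J(y))^{j-r}/\J(y)^{2r}$, and after Green convolution—by iterated applications of Lemma~\ref{bdhgg}(1)—it contributes at most $O((\log\J(x))^{j-r+1}/\J(x)^2)$, which is strictly lower-order for $r\geq 3$. The $\ind{y\in K}U_j(y)$ boundary terms are likewise subleading because they are localised on a finite set. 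Thus only the two-fold splits $r=2$ matter, and combining $\sum_k p_k\binom{k}{2}=\sigma^2/2$ with the summation over $a\in\{1,\dots,j-1\}$ (using the algebraic identity $A_a\,A_{j-a}/(a!(j-a)!)=(2\c_4\sigma^2|K|)^j/(2\sigma^2)^2$, crucially independent of~$a$), one finds
\[
S_j^{\mathrm{lead}}(y)\sim \frac{j!\,(j-1)(2\c_4\sigma^2|K|)^j}{8\sigma^2}\,\frac{(\log\J(y))^{j-2}}{\J(y)^4}.
\]

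The main obstacle is the Green-function asymptotic
\[
\sum_y g(x,y)\,\frac{(\log\J(y))^{j-2}}{\J(y)^4}\sim \frac{4\,(\log\J(x))^{j-1}}{(j-1)\,\J(x)^2},
\]
in which the $4/(j-1)$ factor is exactly what is required to reconstruct $A_j$. I would prove it in the same spirit as Lemma~\ref{sumh}(2): write $1/\J(y)^4\sim g(y,0)^2/\c_4^2$ (using symmetry of $\mu$ and the asymptotic \eqref{green} for $g$), interpret $\sum_y g(x,y)g(y,0)^2h(y)$ probabilistically in terms of visits of the random walk to the origin, and then invoke Lemma~\ref{sumh}(2) applied to $h(y)=(\log\J(y))^{j-2}/\J(y)^2$—which fits the hypothesis with $k=j-2$ and $c_h=1$ and yields exactly the factor $4c_h/(k+1)=4/(j-1)$. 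Combining these pieces closes the induction for $\phi_j$ and establishes \eqref{momZ}. For $\psi_j(x):=\E_x[L_K^j]$ the argument is formally identical: the base case is now $\psi_1(x)=\P_x(T_K<\infty)\sim\c_4\cap(K)/\J(x)^2$ from \eqref{meanL} and \eqref{hittingprobabS-2}, and the boundary condition $\psi_j\equiv 1$ on $K$ (replacing the $\ind{x\in K}U_j$ source) has the effect of substituting $|K|$ by $\cap(K)$ in every constant, propagated unchanged by the same induction; this yields \eqref{momL}.
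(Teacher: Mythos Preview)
Your approach is correct and runs parallel to the paper's, but the organization differs in one meaningful way. The paper does not set up a one–step recursion $(I-P)\phi_j=S_j$ and invert by the \emph{free} Green function; instead it expands $\E_x\big[(\sum_{u\in\L_K}\cdots)^{n+1}\big]$ by grouping the tuple $(u_1,\dots,u_{n+1})$ according to its most recent common ancestor $\omega$. Since each $u_i$ is a pioneer, $\omega$ necessarily satisfies $\omega<\L_K$, so the many-to-one lemma produces a sum $\sum_{k}\E_x[\,\ind{k<T_K}\,F(S_k)\,]$ along the \emph{killed} walk, which lands exactly on the hypothesis of Lemma~\ref{sumh}(2). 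In your scheme the same object appears as $\sum_y g(x,y)S_j(y)$ with the free $g$, and you then need the separate asymptotic
\[
\sum_y g(x,y)\,\frac{(\log\J(y))^{j-2}}{\J(y)^4}\ \sim\ \frac{4}{j-1}\,\frac{(\log\J(x))^{j-1}}{\J(x)^2}.
\]
This is true, but your sketch (``interpret $\sum_y g(x,y)g(y,0)^2h(y)$ via visits to the origin'') is not quite the right bridge. The clean way is to replace one factor $\J(y)^{-2}$ by $q_K(y)/(\c_4\cap(K))$, use the identity $\E_x\big[\sum_{n<T_K}h(S_n);T_K<\infty\big]=\sum_y g_K(x,y)h(y)q_K(y)$, apply Lemma~\ref{sumh}(2), and finally trade $g_K$ for $g$ via Lemma~\ref{fargreen} (the error is $O(\J(x)^{-2}\J(y)^{-2})$, which after convolution with $(\log\J(y))^{j-2}\J(y)^{-4}$ is lower order). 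With that fix, your induction closes exactly as you describe, and your constant bookkeeping (the $A_a A_{j-a}/(a!(j-a)!)$ identity and the $4/(j-1)$ factor) is correct.

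Two minor points. First, ``iterated applications of Lemma~\ref{bdhgg}(1)'' is not the right tool for the $r\ge3$ terms: that lemma controls $\sum_w h(w)g(a,w)g(w,b)$, whereas you need $\sum_y g(x,y)(\log\J(y))^{j-r}\J(y)^{-2r}$ with a single Green factor. A direct splitting of the sum into the regions $\J(y)\le\J(x)/2$, $\J(y-x)\le\J(x)/2$, and the complement gives the bound $O((\log\J(x))^{j-2}/\J(x)^2)$ for each $r\ge3$, which is what you want (this is also how the paper handles it, via \eqref{bdsumh}). Second, to justify $\phi_j=g*R_j$ you should note that $P^n\phi_j(x)\to0$ (which follows from the inductive decay $\phi_j(y)\le C(\log\J_+(y))^{j-1}/(1+\J(y))^2$ and transience). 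With these adjustments your argument is complete and equivalent to the paper's; the MRCA route simply avoids the extra Green–convolution lemma by building the $T_K$–conditioning in from the start.
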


One could immediately deduce from Proposition \ref{moments} and Theorem 3.3.12 of \cite{Durrett} that if the offspring distribution has finite moments of all orders, then the convergence in law for $\frac{L_K}{\log \J(x)}$ (or for $\frac{Z_\T(K)}{\log\J(x)}$) under $\P_x(\cdot \vert Z_\T(K)\ge1)$ holds. However, we will see in the next subsection that the finite second moment for the offspring distribution suffices to establish the Yaglom theorem \ref{4d}.

Let us restrict to the second moment here, as stated in the following lemma. 
\begin{lemma}\label{2mom}
Assume \eqref{offspring} and that the jump law $\mu$ is symmetric and has finite support. As $\|x\|\to\infty$, we have 
\begin{align}
\E_x[L_K^2]&\sim u_K(x) 8\left(  \c_4 \sigma^2 \cap(K) \log \J(x)\right)^2, \label{2momL}\\
\E_x[Z_\T(K)^2]&\sim u_K(x) 8\left(\c_4\sigma^2 |K|\log \J(x)\right)^2.\label{2momZ}
\end{align}
Moreover, for any $y_1,y_2\in K$, we have
\begin{equation}\label{cov}
\E_x[Z_\T(y_1)Z_\T(y_2)]\sim u_K(x) 8\left(\c_4\sigma^2 |K|\log \J(x)\right)^2.
\end{equation}
\end{lemma}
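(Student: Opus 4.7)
\emph{Proof plan.} The identities \eqref{2momL} and \eqref{2momZ} are immediate from Proposition \ref{moments} taken at $j=2$: its hypothesis $\sum_k k^2 p_k<\infty$ is simply $\sigma^2<\infty$, which is part of \eqref{offspring}. The substance of the lemma is therefore \eqref{cov}.

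For \eqref{cov}, my approach is a many-to-two calculation. Writing
\[
Z_\T(y_1)Z_\T(y_2)=\sum_{u,v\in\T}\ind{S_u=y_1}\ind{S_v=y_2}
\]
and separating the diagonal $u=v$ from the off-diagonal $u\ne v$, in which case one groups pairs by the most recent common ancestor $w=u\wedge v$ and the two distinct children $a,b$ of $w$ on the paths to $u$ and $v$, the conditional independence of the subtrees $\T_a,\T_b$ given their positions, combined with the identity $\sum_k k(k-1)p_k=\sigma^2$ and the Many-to-One Lemma \ref{Manyto1} applied to the spine from $\rho$ to $w$, yields
\[
\E_x[Z_\T(y_1)Z_\T(y_2)]=g(x,y_1)\ind{y_1=y_2}+\sigma^2\sum_{z\in\Z^4}g(x,z)\,\tilde g(z,y_1)\,\tilde g(z,y_2),
\]
where $\tilde g(z,y):=\sum_e\mu(e)g(z+e,y)=g(z,y)-\ind{z=y}$.

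It then remains to extract the asymptotic. The diagonal term $g(x,y_1)\ind{y_1=y_2}$ is $O(\J(x)^{-2})$, and the $\delta$-corrections in each $\tilde g$ contribute only $O(\J(x)^{-2})$, both negligible compared to the expected order $\log\J(x)/\J(x)^2$. For the remaining $\sum_z g(x,z)g(z,y_1)g(z,y_2)$, I would use \eqref{green}, namely $g(z,y_i)\sim \c_4/\J(z)^2$ as $\|z\|\to\infty$ (with $y_i$ fixed), and split $z$ by scale. In the dominant range $1\ll\|z\|\ll\|x\|$ the factor $g(x,z)\sim\c_4/\J(x)^2$ is essentially constant, and the residual sum $\sum_{\|z\|\le\|x\|}1/\J(z)^4$ produces the expected logarithmic divergence via a Riemann sum approximation in $\J$-coordinates. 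The outer ranges $\|z\|\asymp\|x\|$ and $\|z\|\gg\|x\|$ are estimated by Lemma \ref{bdhgg} applied to $h(z)=g(z,y_2)$, which has the right $1/\J^2$ decay with exponent $k=0$, and they yield only lower-order terms. Putting the pieces back together and converting powers of $\J(x)$ into $u_K(x)$ via \eqref{hittingprobab4d} gives \eqref{cov}.

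The main delicate point is pinning down the sharp leading constant in the critical-dimension logarithmic sum $\sum_{\|z\|\le\|x\|}1/\J(z)^4$: one must perform the change of variables $w=\Gamma^{-1/2}z/\sqrt{4}$ and track the $\sqrt{\det\Gamma}$ factor through carefully so that it matches $\c_4=1/(8\pi^2\sqrt{\det\Gamma})$ in the final expression. Once this constant is identified, the remaining scale-splitting estimates and the replacement of $\tilde g$ by $g$ are routine.
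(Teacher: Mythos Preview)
Your appeal to Proposition~\ref{moments} for \eqref{2momL} and \eqref{2momZ} is circular in the paper's logical order: the appendix proof of that proposition takes $j=1,2$ as its base case, citing precisely \eqref{meanL4d}, \eqref{2momL} and \eqref{2momZ}. The paper instead proves these directly, writing $\ell_2(x,K)=\ell_1(x,K)+\sigma^2\,\E_x\big[\sum_{n<T_K}h_2(S_n);\,T_K<\infty\big]$ with $h_2(z)=(\sum_e\ell_1(z+e,K)\mu(e))^2/\ell_1(z,K)$ and then invoking \eqref{cvgsumh} of Lemma~\ref{sumh} with $k=0$. The circularity is only organizational, however: running the inductive step of Proposition~\ref{moments} at $n=1$ reproduces exactly this calculation.

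For \eqref{cov} your route is genuinely different. The paper first passes through the pioneer decomposition $Z_\T(y_i)=\sum_{u\in\L_K}Z_{\T_u}(y_i)$, so that after the many-to-two step the spine sum is truncated at $T_K$ and Lemma~\ref{sumh} applies once more---the same mechanism as for $L_K$ and $Z_\T(K)$. You instead use an unconditioned many-to-two identity and reduce directly to the lattice sum $\sum_z g(x,z)g(z,y_1)g(z,y_2)$, extracting the sharp logarithm by a Riemann-sum argument in $\J$-coordinates. This is correct and more elementary, since it bypasses Lemma~\ref{sumh} altogether. Two small caveats: first, Lemma~\ref{bdhgg} as stated bounds the \emph{full} sum by $C\log\J(x)/\J(x)^2$ and will not by itself show the outer ranges $\|z\|\gtrsim\|x\|$ are lower order---you need a short direct tail estimate there (easy, since all three Green kernels decay like $\J(z)^{-2}$). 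Second, your computation produces the constant $8(\c_4\sigma^2\log\J(x))^2$ with no factor of $|K|$; this is consistent with \eqref{2momZ} after summing over $y_1,y_2\in K$, and the $|K|$ appearing in \eqref{cov} as printed is a typo.
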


It follows from \eqref{cov} that for any $y_1, y_2\in K$, 
\begin{equation}\label{sameinK}
\E_x\left[(Z_\T(y_1)-Z_\T(y_2))^2\vert Z_\T(K)\ge 1\right]=o_x(1)(\log \J(x))^2.
\end{equation}

\begin{proof}
To prove \eqref{2momL}, let us write $\ell_j(x,K):=\E_x[L_K^j]$ and $m_j(x,K):=\E_x[Z_\T(K)^j]$ for any $j\ge1$. First, by the many-to-one lemma, it is known that
\[
\ell_1(x,K)=\P_x(T_K<\infty)\textrm{ and }m_1(x,K)=g(x,K).
\]
By \eqref{hittingprobabS} and \eqref{green}, as $\|x\|\to\infty$,
\begin{equation}\label{meanL4d}
\ell_1(x,K)\sim \frac{\c_4\cap(K)}{\J(x)^2}\textrm{ and } m_1(x,K)\sim \frac{\c_4 |K|}{\J(x)^2}.
\end{equation}
Next, observe that
\begin{align*}
\ell_2(x,K)=&\E_x\Bigg[\bigg(\sum_{u\in\T}\ind{u\in\L_K}\bigg)^2\Bigg]=\E_x\Bigg[\sum_{u,v\in\T}\ind{u,v\in \L_K}\Bigg]\\
=&\ell_1(x,K)+\E_x\Bigg[\sum_{u\neq v}\ind{u,v\in\L_K}\Bigg].
\end{align*}
By considering the most recent common ancestor $w$ of the couple $(u,v)$ with $u\neq v$, we get that
\begin{equation*}
\E_x\Bigg[\sum_{u\neq v}\ind{u,v\in\L_K}\Bigg]=\E_x\Bigg[\sum_{w\in\T}\ind{w<\L_K, N_w\ge 2}\sum_{1\le i_1\neq i_2\le N_w}\bigg(\sum_{u\in \L_K}\ind{wi_1\le u}\bigg)\bigg(\sum_{v\in\L_K} \ind{wi_2\le v}\bigg)\Bigg],
\end{equation*}
where $N_w$ denotes the number of children of $w$ and $wi_1, wi_2$ are two distinct children of $w$. It then follows from the branching property that
\begin{align*}
\E_x\Bigg[\sum_{u\neq v}\ind{u,v\in\L_K}\Bigg]=&\sum_{n=0}^\infty \E_x\Bigg[\sum_{|w|=n}\ind{w<\L_K}\sum_{m=2}^\infty p_m m(m-1)\left(\sum_e\ell_1(S_w+e,K)\mu(e)\right)^2\Bigg]\\
=&\sum_{n=0}^\infty \E_x\Bigg[\sum_{|w|=n}\ind{w<\L_K}\sigma^2\left(\sum_e\ell_1(S_w+e,K)\mu(e)\right)^2\Bigg].
\end{align*}
Hence, applying the many-to-one lemma to the last expectation, one gets that
\begin{align*}
\ell_2(x,K)=&\ell_1(x,K)+\sum_{n=0}^\infty\sigma^2 \E_x\Bigg[\ind{n<T_K}\left(\sum_e\ell_1(S_n+e,K)\mu(e)\right)^2\Bigg]\\
=&\ell_1(x,K)+\sigma^2 \sum_{n=0}^\infty\E_x\left[\ind{n<T_K}h_2(S_n)\P_{S_n}(T_K<\infty)\right],
\end{align*}
where the function
\[
h_2(z): =\frac{(\sum_e \ell_1(z+e,K)\mu(e))^2}{\P_z(T_K<\infty)}=\frac{(\sum_e \ell_1(z+e,K)\mu(e))^2}{\ell_1(z,K)}.
\]
Observe that by the Markov property at time $n$,
\[
\E_x[\ind{n<T_K<\infty}h_2(S_n)]=\E_x\left[\ind{n<T_K}h_2(S_n)\P_{S_n}(T_K<\infty)\right].
\]
As a consequence, 
\begin{align*}
\ell_2(x,K)=&\ell_1(x,K)+\sigma^2\E_x\left[\sum_{n=0}^{T_K-1} h_2(S_n); T_K<\infty\right]\\
=&\P_x(T_K<\infty)+\sigma^2\E_x\left[\sum_{n=0}^{T_K-1} h_2(S_n)\Big\vert T_K<\infty\right]\P_x(T_K<\infty).
\end{align*}
Since $\mu$ has finite support, we derive from \eqref{meanL4d} that as $\|z\|\to\infty$,
\[
	\sum_e \ell_1(z+e,K)\mu(e) \sim \frac{\c_4\cap(K)}{\J(z)^2},
\]
and therefore
\[
h_2(z)\sim \frac{\c_4\cap(K)}{\J(z)^2}.
\]
The estimate \eqref{cvgsumh} of Lemma \ref{sumh} can be applied here with $k=0$. 
We thus conclude that
\[
\ell_2(x,K)\sim \frac{\c_4\cap(K)}{\J(x)^2}\sigma^2 4\c_4\cap(K)\log \J(x),
\]
which combined with \eqref{hittingprobab4d} yields \eqref{2momL}.

It remains to prove \eqref{2momZ} and \eqref{cov}. Observe that 
\[
Z_\T(K)=\sum_{u\in \T}\ind{u\in \L_K}Z_{\T_u}(K), \textrm{ and } Z_\T(y_i)=\sum_{u\in\T}\ind{u\in\L_K}Z_{\T_u}(y_i), \textrm{ for } i=1,2.
\]
Similarly as the arguments presented above, one can obtain \eqref{2momZ} and \eqref{cov} by replacing $\ind{u\in\L_K}$ by $\ind{u\in \L_K}Z_{\T_u}(K)$ and $\ind{u\in\L_K}Z_{\T_u}(y_i)$ respectively. We leave the details to the reader.  
\end{proof}

Note that if we use \eqref{bdsumh} instead of \eqref{cvgsumh} in the previous proof, we obtain the existence of some constant $C<\infty$ such that 
\begin{equation}\label{sup2momZ}
\E_x \left[Z_\T(K)^2\right]\le C \frac{\log(2+\J(x))}{(1+\J(x))^2}, \quad \forall x\in\Z^4.
\end{equation}
This upper bound will be useful later. 

\subsection{Yaglom-type limit: Theorem \ref{4d}}

\subsubsection{Most recent common ancestor $\A_x(K)$} $ $

In this part, we focus on the most recent common ancestor $\A_x(K)$ of all the individuals in $\L_K$. We will study its spatial position $H_x$ and the number of its children having descendants in $K$ which is denoted by $N_x(K):=\#\N_x(K)$. The main results are summarized in the following proposition.

\begin{proposition}\label{MRCA}
Assume \eqref{offspring} and that the jump law $\mu$ is symmetric and has finite support. Under $\P_x(\cdot\vert Z_\T(K)\ge 1)$, as $\|x\|\to\infty$, 
\begin{align}
\frac{\log \J(H_x)}{\log \J(x)}&\xrightarrow{\mathrm{(d)}} U,\label{cvgHx}\\
N_x(K)&\xrightarrow{\textrm{ \emph{in probability} }} 2,\label{cvgNx}
\end{align}
where $U$ is uniformly distributed on $(0,1)$. Moreover, $(\frac{\log_+\J(H_x)}{\log \J(x)})^2$ is uniformly integrable under $\P_x(\cdot\vert Z_\T(K)\ge 1)$.
\end{proposition}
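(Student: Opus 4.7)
The plan is to derive the joint law of $(\vert\A_x(K)\vert,\,H_x,\,N_x(K))$ under $\P_x(\cdot,\,L_K\ge 1)$ via a branching decomposition in the spirit of Geiger~\cite{Geiger}, and to extract the three marginal claims from it. First, I would observe that an individual $u$ at generation $k$ with $S_u=z\notin K$ is the most recent common ancestor with $N_x(K)=m\ge 2$ iff (i) the lineage $x=S_{u_0},\ldots,S_{u_k}=z$ avoids $K$, (ii) no off-spine child of any strict ancestor $u_j$ has a subtree hitting $K$, and (iii) exactly $m$ children of $u$ have subtrees hitting $K$; the case $z\in K$ corresponds to $\{L_K=1\}$ and is handled separately. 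Applying Lemma~\ref{Manyto1} to the sum over such $u$ at generation $k$ and using the independence of the off-spine subtrees, this joint probability becomes
\[
\E_x\Bigl[\ind{S_k=z,\,T_K>k}\prod_{j=0}^{k-1}\phi(S_j)\Bigr]\cdot \tilde\psi_m(z),
\]
where $\phi(y):=f'(1-\bar u_K(y))$ for $f$ the offspring generating function, $\bar u_K:=\mu*u_K$, and $\tilde\psi_m(z):=\sum_n p_n\binom{n}{m}\bar u_K(z)^m(1-\bar u_K(z))^{n-m}$.

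Summing over $k$ would then produce the tilted killed Green function $\tilde G_K(x,z):=\sum_{k\ge 0}\E_x[\ind{S_k=z,\,T_K>k}\prod_{j<k}\phi(S_j)]$ and give $\P_x(H_x=z,\,N_x(K)=m)\sim \tilde G_K(x,z)\,\tilde\psi_m(z)$. For \eqref{cvgNx}, I would note that $\tilde\psi_2(z)\sim\frac{\sigma^2}{2}\bar u_K(z)^2$ while $\tilde\psi_m(z)=O(\bar u_K(z)^m)$ for $m\ge 3$, so after summation the $m\ge 3$ contributions are a factor $O(1/\log\J(x))$ smaller than $u_K(x)$ and vanish after conditioning; moreover $N_x(K)=1$ coincides with $\{L_K=1\}$, which is negligible because $\E_x[L_K\mid L_K\ge 1]\to\infty$ by Lemma~\ref{2mom}, forcing $\P_x(L_K=1\mid L_K\ge 1)\to 0$ via Markov's inequality.

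For \eqref{cvgHx}, the heart of the argument is the asymptotic (analogous to Geiger's telescoping factor $((n-k)/n)^2$)
\[
\tilde G_K(x,z)\,\sim\,g(x,z)\Bigl(\frac{\log\J(z)}{\log\J(x)}\Bigr)^{\!2}\qquad\text{for }1\ll\J(z)\ll\J(x),
\]
which I would establish from the expansion $\phi=1-\sigma^2\bar u_K+O(\bar u_K^2)$, the identity $(\mu*u_K)-u_K=\frac{\sigma^2}{2}u_K^2+O(u_K^3)$, and Lemma~\ref{sumh}. Combining this with $\tilde\psi_2(z)\sim 1/[8\sigma^2\J(z)^4(\log\J(z))^2]$, $g(x,z)\sim \c_4/\J(x)^2$ for $\J(z)\ll\J(x)$, and spherical-shell integration in the $\J$-metric, the density of $\J(H_x)$ on $\{L_K\ge 1\}$ becomes asymptotic to $d\rho/[2\sigma^2\rho\J(x)^2(\log\J(x))^2]$. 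Dividing by $u_K(x)\sim 1/[2\sigma^2\J(x)^2\log\J(x)]$ and changing variables to $u=\log\J(H_x)/\log\J(x)$ then produces the uniform density $du$ on $(0,1)$; a direct check that $\int_0^1 u_K(x)\,du=u_K(x)$ confirms the self-consistency of this calculation.

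For the uniform integrability of $(\log_+\J(H_x)/\log\J(x))^2$, I would weight the above decomposition by $(\log_+\J(z))^{2+\eta}$ for a small $\eta>0$ and invoke Lemma~\ref{sumh} to bound $\E_x[(\log_+\J(H_x))^{2+\eta}\ind{L_K\ge 1}]$ by $C(\log\J(x))^{2+\eta}u_K(x)$, yielding a uniform bound on the conditional $(2+\eta)$-th moment. The hardest part will be pinning down the exponent~$2$ in $\tilde G_K(x,z)\sim g(x,z)(\log\J(z)/\log\J(x))^2$: the tilt $\prod_{j<k}\phi(S_j)\approx\exp(-\sigma^2\sum_j\bar u_K(S_j))$ does not tend to a positive constant, since the sum grows slowly in $\J(x)$; extracting exactly the right exponent (rather than some other power or a spurious double-logarithmic correction) requires carefully tracking the logarithmic corrections via the PDE-like identity for $\mu*u_K-u_K$ together with Lemma~\ref{sumh}.
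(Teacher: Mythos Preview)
Your strategy is sound and genuinely different from the paper's. You decompose directly at the MRCA, obtaining the tilted Green function $\tilde G_K(x,z)$ with tilt $\phi=f'(1-\bar u_K)$, and then read off the density of $\J(H_x)$. The paper instead works along the \emph{leftmost} path $u^*_K$ (so only the left siblings appear in the tilt, which is exactly the representation~\eqref{uK} of $u_K(x)$), and encodes $\{\J(H_x)\le r\}$ via whether any \emph{right} sibling between last-exit levels $\sigma_\gamma(w)$ and $\sigma_r(w)$ leads to $K$; the three estimates of Lemma~\ref{mainHx} then give $\P_x(\J(H_x)\le\J(x)^a\mid Z_\T(K)\ge1)\to a$ directly as a CDF. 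Your $\tilde G_K(x,z)\sim g(x,z)(\log\J(z)/\log\J(x))^2$ is the same computation in different clothing: the exponent~$2$ arises because your tilt involves \emph{all} off-spine siblings ($\E[L+R]=\sigma^2$), whereas the paper splits this into the half that is already inside $u_K(x)$ and the extra half coming from the right-sibling condition, each contributing a factor $a$ in~\eqref{cvgpositionH} via~\eqref{sumu}. Your packaging is conceptually cleaner (a genuine density calculation in the spirit of Geiger), but turning the pointwise asymptotic for $\tilde G_K$ into an integrable statement will force you through the same last-exit/time-reversal machinery as Lemma~\ref{mainHx}. For \eqref{cvgNx} with $m\ge3$ the paper takes a completely different route: rather than comparing $\tilde\psi_m$ to $\tilde\psi_2$, it uses the first-moment identity~\eqref{decompZex} together with \eqref{cvgHx} to squeeze $\E[(N_x(K)-2)_+\cdot\log\J(H_x)/\log\J(x)]\to0$.

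There is one actual error. For $N_x(K)=1$ you write that $\E_x[L_K\mid L_K\ge1]\to\infty$ ``forces $\P_x(L_K=1\mid L_K\ge1)\to0$ via Markov's inequality''. It does not: a large conditional mean is compatible with $\P(L_K=1\mid L_K\ge1)$ bounded away from~$0$ (put mass $1/2$ at $1$ and $1/2$ at $M\to\infty$), and neither the first nor the second moment from Lemma~\ref{2mom} rules this out by Markov or Chebyshev. The fix is immediate from the rest of your argument: $L_K=1$ implies $H_x\in K$, hence $\log\J(H_x)/\log\J(x)\to0$, which has probability~$0$ under the limiting uniform law from~\eqref{cvgHx}. (This is exactly how the paper handles it.) Alternatively, within your framework, extend the tilt asymptotic down to $z\in K$: the factor becomes $O((\log\J(x))^{-2})$, so $\P_x(L_K=1)\asymp\J(x)^{-2}(\log\J(x))^{-2}=o(u_K(x))$. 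A smaller caveat: your expansions $\phi=1-\sigma^2\bar u_K+O(\bar u_K^2)$ and $(\mu*u_K)-u_K=\tfrac{\sigma^2}{2}u_K^2+O(u_K^3)$ tacitly use $f'''(1)<\infty$; under~\eqref{offspring} alone you only get $o(\bar u_K)$ and $o(u_K^2)$ remainders, which still suffice for fixed $a\in(0,1)$ since the relevant tilt sum stays bounded.
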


As an immediate consequence of Proposition \ref{MRCA}, as $\|x\|\to\infty$, we have
\begin{equation}\label{2momHx}
\E_x\left[\left(\frac{\log_+\J(H_x)}{\log \J(x)}\right)^2\Big\vert Z_\T(K)\ge 1\right]\to \E\big[U^2\big]=\frac13.
\end{equation}

\begin{figure}[!htbp]
 \begin{center}
 \includegraphics[width=0.5\textwidth]{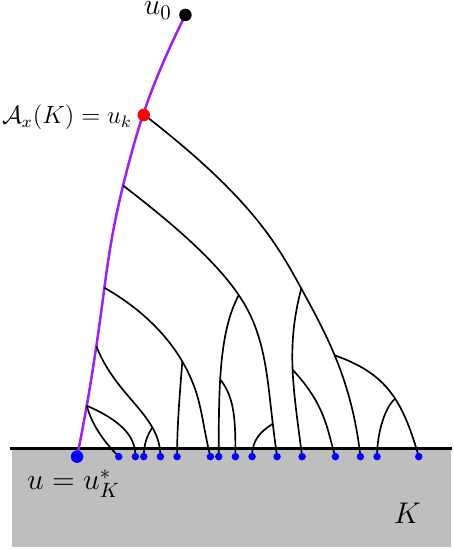}
 \caption{\textsf{The left most path entering $K$.} \label{fig-leftmost_path}}
 \end{center}
\end{figure}

To prove Proposition \ref{MRCA}, we will treat $\J(H_x)$ and $N_x(K)$ separately.  First observe that if $u=u^*_K$ is the first individual hitting $K$ in the lexicographic order, then $\A_x(K)$ lies on the ancestral line $\{u_0, u_1,\cdots, u_{|u|}\}$ of $u$. So by change of measure and Proposition \ref{spine}, similarly as in \eqref{uK}, we obtain 
\begin{multline}\label{positionAx}
\P_x(\J(H_x)\le r, Z_\T(K)\ge 1)\\
=\E_{\Q^*_x}\Bigg[\bigg(\ind{T_K(w)<\infty}\prod_{k=1}^{T_K(w)} \prod_{u\in \L(w_k)}\ind{Z_{\T_u}(K)=0}\bigg)\ind{H_x\in \B_r}\Bigg].
\end{multline}
Let us introduce for all $r>0$
\[
\sigma_r(w):=\max\{0\le k<T_K(w) \colon S_{w_k}\notin \B_r\}, 
\]
which is the last time that the random walk along the spine stays outside the ball $\B_r$ before hitting $K$. Then we observe immediately that under $\Q^*_x$, for $r>\max_{y\in K}\J(y)$, on the event $u^*_K\in \{w_n, n\ge1\}$,
\begin{equation}\label{obsHx}
\bigg\{\sum_{k=1}^{\sigma_r(w)}\sum_{v\in\rt(w_k)}Z_{\T_v}(K)=0\bigg\}=\{w_{\sigma_r(w)}<\A_x(K)\}\subset\{ H_x\in \B_r\}.
\end{equation}
In the right-hand side of \eqref{positionAx}, we will approximate $\ind{H_x\in \B_r}$ by 
\[
   \ind{\sum_{k=1}^{\sigma_r(w)}\sum_{v\in\rt(w_k)}Z_{\T_v}(K)=0}
\] 
with $r=\J(x)^a\gg 1$ with $a\in(0,1)$, thanks to the following lemma. 

Recall that $\gamma=\gamma_x=\J(x)(\log\J(x))^{-\frac14}$. When $\|x\|\to\infty$, $\gamma\gg r=\J(x)^a$.
\begin{lemma}\label{mainHx}
For $r=\J(x)^a$ with $a\in(0,1)$ and for any $\delta\in(0,1-a)$, as $\|x\|\to\infty$, 
\begin{align}
&\E_{\Q^*_x}\left[\ind{T_K(w)<\infty}\prod_{k=1}^{T_K(w)}\prod_{u\in\L(w_k)}\ind{Z_{\T_u}(K)=0}\times \ind{\min\limits_{\sigma_\gamma(w)<k\le \sigma_{r^{1+\delta/a}}(w)} \J(S_{w_k})\le r}\right]\nonumber\\
\;=\;&o_x(1) u_K(x),\label{smallpartcvgH1}\\
&\E_{\Q^*_x}\Bigg[\ind{T_K(w)<\infty}\prod_{k=1}^{T_K(w)}\prod_{u\in\L(w_k)}\ind{Z_{\T_u}(K)=0}\times\ind{\sum_{k=1}^{\sigma_\gamma(w)}\sum_{u\in\rt(w_k)} Z_{\T_u}(K)\ge1}\Bigg]\nonumber\\
\;=\;&o_x(1)u_K(x).\label{smallpartcvgH2}
\end{align}
Moreover, we have
\begin{multline}\label{cvgpositionH}
\E_{\Q^*_x}\Bigg[\ind{T_K(w)<\infty}\prod_{k=1}^{T_K(w)}\prod_{u\in\L(w_k)}\ind{Z_{\T_u}(K)=0}\times\ind{\sum_{k=1+\sigma_\gamma(w)}^{\sigma_r(w)}\sum_{u\in\rt(w_k)} Z_{\T_u}(K)=0}\Bigg]\\
=(a+o_x(1))u_K(x).
\end{multline}
\end{lemma}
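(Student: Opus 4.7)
The plan is to reduce all three estimates to computations about the spine random walk $(S_{w_n})_{n\ge 0}$ by integrating out the off-spine subtrees under $\Q^*_x$. By Proposition~\ref{spine}, conditioning on the spine trajectory, the sibling counts $L(w_k),R(w_k)$, and their jumps, the subtrees $\T_u$ for $u\in\L(w_k)\cup\rt(w_k)$ are independent of law $\P_{S_u}$. Hence, writing $p_L(z):=\E[(1-\bar u_K(z))^{L(w_1)}]$ and $p_R(z):=\E[(1-\bar u_K(z))^{R(w_1)}]$ with $\bar u_K(z)=\sum_e u_K(z+e)\mu(e)\sim (2\sigma^2\J(z)^2\log\J(z))^{-1}$, the left-side product $\prod_{v\in\L(w_k)}(1-u_K(S_v))$ averages to $\prod_k p_L(S_{w_{k-1}})$, and analogously for the right side. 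In particular, we have the baseline identity $u_K(x)=\E_x[\ind{T_K<\infty}\prod_{k=1}^{T_K}p_L(S_{k-1})]$ where $(S_k)$ denotes the spine viewed as an ordinary random walk.

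For \eqref{smallpartcvgH1}, I would drop the left-side product (bounded by $1$) and reduce to the pure random walk event $\{T_K<\infty,\ \exists k\in(\sigma_\gamma,\sigma_{r^{1+\delta/a}}]:S_k\in\B_r\}$. Any such trajectory must enter $\B_r$ at some $T^*_r\ge 0$, subsequently exit $\B_{r^{1+\delta/a}}$ (by definition of $\sigma_{r^{1+\delta/a}}$), and then hit $K$. Applying the strong Markov property at $T^*_r$ and using \eqref{hittingprobabS-2} with $\mathbf{Cap}(\B_r)\asymp r^2$ for the entrance, together with \eqref{hittingprobabS-2} for the post-exit return, yields the bound $Cr^2/\J(x)^2\cdot C/r^{2(1+\delta/a)}=C/\J(x)^{2+2\delta}$, which is $o(u_K(x))$ since $u_K(x)\sim 1/(\J(x)^2\log\J(x))$.

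For \eqref{smallpartcvgH2}, I bound $\ind{\Sigma\ge 1}\le\sum_{k\le\sigma_\gamma}\sum_{u\in\rt(w_k)}\ind{Z_{\T_u}(K)\ge 1}$, take conditional expectation ($\E[\ind{Z_{\T_u}(K)\ge 1}\mid S_u]=u_K(S_u)$), integrate over $R(w_k)$ and jumps, and drop the left-side product to obtain
\[
\tfrac{\sigma^2}{2}\,\E_x\Big[\ind{T_K<\infty}\sum_{k=0}^{\sigma_\gamma-1}\bar u_K(S_k)\Big]=\tfrac{\sigma^2}{2}\sum_{z\in\Z^4}g_K(x,z)\,\bar u_K(z)\,\P_z(\tau_\gamma<T_K<\infty),
\]
by the strong Markov property at $k$. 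Splitting along $z\in\B_\gamma$ versus $z\notin\B_\gamma$, for $z\notin\B_\gamma$ we use $\P_z(\tau_\gamma<T_K<\infty)=q_K(z)\sim C/\J(z)^2$ and apply a Green-function estimate in the spirit of Lemma~\ref{bdhgg}; for $z\in\B_\gamma$ we decompose at the exit time $\tau_\gamma$ to obtain $\P_z(\tau_\gamma<T_K<\infty)\le C/\gamma^2$, and use that $g(x,z)\le C/\J(x)^2$ uniformly in $z\in\B_\gamma$ (since $\J(x-z)\ge\J(x)-\gamma\sim\J(x)$). The choice $\gamma=\J(x)/(\log\J(x))^{1/4}$ provides just enough room for both contributions to beat $u_K(x)\sim 1/(\J(x)^2\log\J(x))$ by a factor $o(1)$.

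For \eqref{cvgpositionH}, I integrate out both left- and right-side subtrees to get
\[
\mathrm{LHS}=\E_x\Big[\ind{T_K<\infty}\prod_{k=1}^{T_K}p_L(S_{k-1})\cdot\prod_{k=1+\sigma_\gamma}^{\sigma_r}p_R(S_{k-1})\Big].
\]
Define the $u_K$-tilted law $P_x^\ast$ on spine trajectories by $dP_x^\ast/d\P_x\propto\ind{T_K<\infty}\prod p_L$, so that $\mathrm{LHS}/u_K(x)=\E_{P_x^\ast}[\prod_{k\in(\sigma_\gamma,\sigma_r]}p_R(S_{k-1})]$. Using $p_R(z)=1-\tfrac{\sigma^2}{2}\bar u_K(z)+O(\bar u_K(z)^2)$ and $\bar u_K(z)\sim 1/(2\sigma^2\J(z)^2\log\J(z))$, the logarithm of the product telescopes to $-\tfrac{1}{4}\sum_{k\in(\sigma_\gamma,\sigma_r]}(\J(S_{k-1})^2\log\J(S_{k-1}))^{-1}+o(1)$. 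The raw expectation of this sum, computed via the decomposition $\{k\in(\sigma_\gamma,\sigma_r]\}$ and Lemma~\ref{sumh} applied to $h=\bar u_K$, produces an integral $\int_r^\gamma ds/(s\log s)=\log\log\gamma-\log\log r$ whose relative size after dividing by $u_K(x)$ gives exactly the logarithmic ratio $\log\gamma/\log r\to 1/a$, hence yielding the coefficient $a=\log r/\log\J(x)$ in the limit. The main obstacle is this last identification: the naive moment estimate blows up, and the convergence must be extracted via the ``log-random'' structure of the tilted spine in $d=4$, where last-exit times from $\B_{\J(x)^b}$ have a uniform-scaling limit that produces the linear dependence on $a$.
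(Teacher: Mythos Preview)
Your argument for \eqref{smallpartcvgH1} is correct and in fact cleaner than the paper's: the paper keeps the left product, performs time-reversal and a last-exit decomposition at $\sigma_\gamma$, and then controls $\P_y\bigl(\min_{\tau_{r'} \le k < \tau_\gamma}\J(S_k)\le r\bigr)$ via a strong invariance principle and Bessel-process estimates, whereas your direct capacity bound $C/\J(x)^{2+2\delta}$ bypasses this machinery entirely.

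Your argument for \eqref{smallpartcvgH2}, however, does not close. After dropping the left product and applying Markov's inequality, your upper bound is $\tfrac{\sigma^2}{2}\sum_z g_K(x,z)\,\bar u_K(z)\,\phi(z)$ with $\phi(z)\le q_K(z)$ for $z\notin\B_\gamma$. The dominant contribution comes from $z$ with $\|z-x\|\le \J(x)/2$ (hence $\J(z)\asymp\J(x)$, and in particular $z\notin\B_\gamma$): there $\bar u_K(z)q_K(z)\asymp (\J(x)^4\log\J(x))^{-1}$ while $\sum_{\|z-x\|\le\J(x)/2}g(x,z)\asymp \J(x)^2$, producing a contribution of order $(\J(x)^2\log\J(x))^{-1}\asymp u_K(x)$, \emph{not} $o(u_K(x))$. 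The choice of $\gamma$ is irrelevant for this region. The paper avoids this loss by never dropping the left product: it passes to the complementary quantity $u_K(x)-\E_\eqref{smallpartcvgH2}$, which is a clean expectation carrying the full product $\prod_{k\le\sigma_\gamma}\prod_{u\in\L(w_k)\cup\rt(w_k)}(1-u_K(S_u))$; a last-exit decomposition at $\sigma_\gamma$ together with Lemma~3.1 of \cite{Zhu} then shows that both this quantity and $u_K(x)$ equal $(1+o_x(1))$ times the \emph{same} second factor, so their difference is $o_x(1)u_K(x)$.

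For \eqref{cvgpositionH} you have the right heuristic but two gaps. First, Lemma~\ref{sumh} requires $h(z)\sim c_h(\log\J(z))^k/\J(z)^2$ with $k\in\mathbb{N}$, so it does not apply to $\bar u_K$ (which corresponds to $k=-1$). Second, the mean alone is not enough: you need \emph{concentration} of $\sum_{k\in(\sigma_\gamma,\sigma_r]}(\J(S_{k-1})^2\log\J(S_{k-1}))^{-1}$ around $4\log(1/a)$ to pass the exponential through the expectation. The paper obtains this by time-reversal (turning $\sigma_\gamma,\sigma_r$ into the stopping times $\tau_\gamma,\tau_r$ for a walk started from $y\in K$), proves the dedicated concentration estimate \eqref{sumu} via strong approximation, and then compares with the analogous expression for $u_K(x)$---which carries only the $L_k$ exponent---to extract the extra factor $a^{1+o_x(1)}$.
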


The proof of this technical lemma is postponed to Appendix \ref{AppendixB}. Now we are ready to prove \eqref{cvgHx} in Proposition \ref{MRCA}. 

\begin{proof}[Proof of \eqref{cvgHx}]
On the one hand, by \eqref{positionAx} and \eqref{obsHx}, for $r=\J(x)^a$ with $a\in(0,1)$, one has
\begin{align*}
&\P_x(\J(H_x)\le r, Z_\T(K)\ge 1)\\
\;\ge\; &\E_{\Q^*_x}\left[\ind{T_K(w)<\infty}\prod_{k=1}^{T_K(w)} \prod_{u\in \L(w_k)}\ind{Z_{\T_u}(K)=0}\ind{\sum_{k=1}^{\sigma_r(w)}\sum_{v\in\rt(w_k)}Z_{\T_v}(K)=0}\right]\\
\;\ge\; & \E_{\Q^*_x}\left[\ind{T_K(w)<\infty}\prod_{k=1}^{T_K(w)}\prod_{u\in\L(w_k)}\ind{Z_{\T_u}(K)=0}\times\ind{\sum_{k=1+\sigma_\gamma(w)}^{\sigma_r(w)}\sum_{u\in\rt(w_k)} Z_{\T_u}(K)=0}\right]\\
&\quad -\E_{\Q^*_x}\left[\ind{T_K(w)<\infty}\prod_{k=1}^{T_K(w)}\prod_{u\in\L(w_k)}\ind{Z_{\T_u}(K)=0}\times\ind{\sum_{k=1}^{\sigma_\gamma(w)}\sum_{u\in\rt(w_k)} Z_{\T_u}(K)\ge1}\right].
\end{align*}
Then we deduce from \eqref{smallpartcvgH2} and \eqref{cvgpositionH} that 
\[
\liminf_{\|x\|\to\infty}\P_x(\J(H_x)\le \J(x)^a\vert Z_\T(K)\ge 1)\ge a.
\]

\begin{figure}[!htbp]
 \begin{center}
 \includegraphics[width=0.6\textwidth]{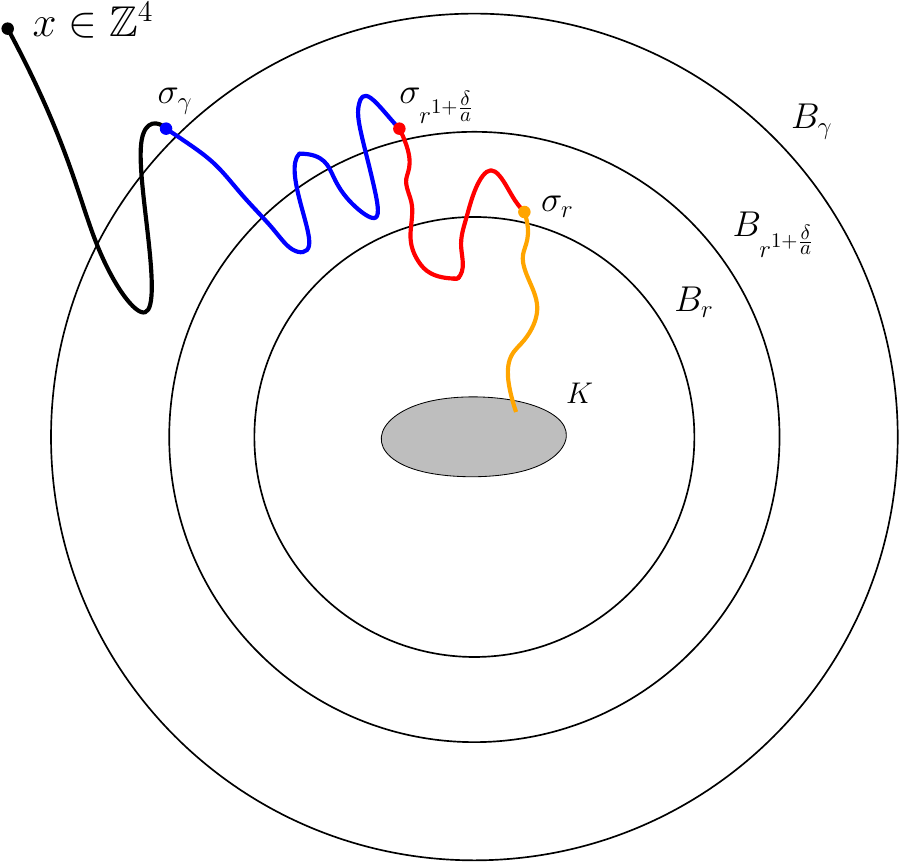}
 \caption{\textsf{Spine behavior in the space $\Z^4$.} \label{fig-spine_walk}}
 \end{center}
\end{figure}

On the other hand, for the upper bound one sees that for any small $\delta\in(0,1-a)$,
\begin{align}
&\P_x(\J(H_x)\le r, Z_\T(K)\ge 1)\nonumber\\
\;\le\; & \E_{\Q^*_x}\left[\ind{T_K(w)<\infty}\prod_{k=1}^{T_K(w)}\prod_{u\in\L(w_k)}\ind{Z_{\T_u}(K)=0}\ind{\min\limits_{\sigma_\gamma(w)<k\le \sigma_{r^{1+\delta/a}}(w)} \J(S_{w_k})\le r}\right]\nonumber\\
&+\E_{\Q^*_x}\left[\ind{T_K(w)<\infty}\prod_{k=1}^{T_K(w)}\prod_{u\in\L(w_k)}\ind{Z_{\T_u}(K)=0} \ind{\A_x(K)\le w_{\sigma_\gamma(w)} }\right] \nonumber\\
&+\E_{\Q^*_x}\left[\ind{T_K(w)<\infty}\prod_{k=1}^{T_K(w)}\prod_{u\in\L(w_k)}\ind{Z_{\T_u}(K)=0} \ind{ \A_x(K)>w_{\sigma_{r^{1+\delta/a}}(w)} }\right]. \label{upbdcvgHx}
\end{align}
Here, we observe that
\begin{align*}
&\E_{\Q^*_x}\left[\ind{T_K(w)<\infty}\prod_{k=1}^{T_K(w)}\prod_{u\in\L(w_k)}\ind{Z_{\T_u}(K)=0}\ind{ \A_x(K)\le w_{\sigma_\gamma(w)}}\right]\\
\;\le\; & \E_{\Q^*_x}\left[\ind{T_K(w)<\infty}\prod_{k=1}^{T_K(w)}\prod_{u\in\L(w_k)}\ind{Z_{\T_u}(K)=0}\times\ind{\sum_{k=1}^{\sigma_\gamma(w)}\sum_{u\in\rt(w_k)} Z_{\T_u}(K)\ge1}\right],
\end{align*}
which is $o_x(1)u_K(x)$ by \eqref{smallpartcvgH2}. Applying it and \eqref{smallpartcvgH1} to \eqref{upbdcvgHx} yields that 
\begin{align*}
&\P_x(\J(H_x)\le r, Z_\T(K)\ge 1)\le o_x(1) u_K(x)\nonumber\\
 & +\E_{\Q^*_x}\left[\ind{T_K(w)<\infty}\prod_{k=1}^{T_K(w)}\prod_{u\in\L(w_k)}\ind{Z_{\T_u}(K)=0}\ind{\sum_{k=1+\sigma_\gamma(w)}^{\sigma_{r^{1+\delta/a}}(w)}\sum_{v\in\rt(w_k)}Z_{\T_v}(K)=0}\right].
\end{align*}
Applying \eqref{cvgpositionH} for $r^{1+\delta/a}=\J(x)^{a+\delta}$ implies that
\begin{multline*}
\E_{\Q^*_x}\left[\ind{T_K(w)<\infty}\prod_{k=1}^{T_K(w)}\prod_{u\in\L(w_k)}\ind{Z_{\T_u}(K)=0}\ind{\sum_{k=1+\sigma_\gamma(w)}^{\sigma_{r^{1+\delta/a}}(w)}\sum_{v\in\rt(w_k)}Z_{\T_v}(K)=0}\right] \\
= (a+\delta +o_x(1))u_K(x).
\end{multline*}
It then follows that for any small $\delta\in(0,1-a)$, 
\[
\limsup_{\|x\|\to\infty}\P_x(\J(H_x)\le \J(x)^a\vert Z_\T(K)\ge 1)\le a+\delta,
\]
which suffices to conclude \eqref{cvgHx}. 
\end{proof}

Next, let us check the uniform integrability of $(\frac{\log_+\J(H_x)}{\log \J(x)})^2$ under $\P_x(\cdot\vert Z_\T(K)\ge 1)$, for $\|x\|\gg1$. We define the radius of $K$ as $\rad(K):=\sup_{y\in K}\|y\|$.

\begin{proof}[Uniform integrability]
For $R:=\J(x)^a>\rad(K)$ with $a>1$, observe that 
\begin{align*}
&\P_x(\J(H_x)> \J(x)^a, \Z_\T(K)\ge 1)\le  \E_x\left[\sum_{u\in\T}\ind{u=\A_x(K), S_u\notin \B_R}\right]\\
\;\le\; & \E_x\Bigg[\sum_{u\in\T}\ind{u<\L_K, S_u\notin\B_R}\ind{\sum_{j=1}^{N_u}\ind{Z_{\T_{uj}}(K)\ge 1}\ge2}\Bigg]\\
\;\le\; &\sum_{n=0}^\infty \E_x\Bigg[\sum_{|u|=n}\ind{u<\L_K, S_u\notin\B_R}\sum_{m=2}^\infty p_m \binom{m}{2} \bigg(\sum_eu_K(S_u+e)\mu(e)\bigg)^2 \Bigg].
\end{align*}
Recall that the sum $\sum_{m=2}^\infty p_m \binom{m}{2}$ is given by $\sigma^2/2$. Set $\textbf{h}_2(z):=(\sum_e u_K(z+e)\mu(e))^2$. By the many-to-one lemma, we get that 
\begin{align*}
\P_x(\J(H_x)> \J(x)^a, \Z_\T(K)\ge 1)
\le & \frac{\sigma^2}{2}\E_x\left[\sum_{n=0}^\infty \ind{n<T_K, S_n\notin\B_R}\textbf{h}_2(S_n)\right]\\
\le &\frac{\sigma^2}{2} \sum_{z\notin\B_R} \textbf{h}_2(z) g(x,z).
\end{align*}
Note that by \eqref{hittingprobab4d}, there exists some constant $c<\infty$ such that for all $z\in \Z^4$ satisfying $\J(z)>\J(x)^a$ with $a>1$ and $\|x\|\gg1$,
\[
\textbf{h}_2(z)\le \frac{c}{(\log \J(z))^2\J(z)^4}.
\]
Hereafter the constant $c$ may change its value from line to line.
Using the last estimate and \eqref{green}, we obtain that
\[
\P_x(\J(H_x)> \J(x)^a, \Z_\T(K)\ge 1)\le \frac{c}{a^2(\log\J(x))^2 \J(x)^{2a}}.
\]
Thus, the conditional probability satisfies 
\[
\P_x(\J(H_x)> \J(x)^a\vert \Z_\T(K)\ge 1)\le  \frac{c}{a^2(\log\J(x)) \J(x)^{2a-2}}.
\]
This implies that for any $a>1$ and for all $\|x\|\gg1$,
\[
\E_x\left[\left(\frac{\log_+ \J(H_x)}{\log \J(x)}\right)^2\ind{\frac{\log_+ \J(H_x)}{\log \J(x)}>a}\Big\vert \Z_\T(K)\ge 1\right] \le \frac{c}{\J(x)^{2(a-1)}},
\]
which goes to zero as $a\to\infty$. This concludes the uniform integrability of $(\frac{\log_+\J(H_x)}{\log \J(x)})^2$ under $\P_x(\cdot\vert Z_\T(K)\ge 1)$.
\end{proof}

Finally, it remains to study $N_x(K)$.

\begin{proof}[Proof of \eqref{cvgNx}.]
Obviously, $N_x(K)\ge 2$ as long as $Z_\T(K)\ge 2$. If $Z_\T(K)=1$, one has $H_x\in K$. Hence,
\[
  \P_x(N_x(K)=1\vert Z_\T(K)\ge 1) \leq \P_x(H_x\in K \vert Z_\T(K)\ge 1).
\]
It then follows from \eqref{cvgHx} that
\begin{align*} 
&\limsup_{\|x\|\to\infty}\P_x(N_x(K)=1\vert Z_\T(K)\ge 1) \\
\le &\limsup_{a\to 0+}\lim_{\|x\|\to\infty}\P_x(\J(H_x)\le \J(x)^a \vert Z_\T(K)\ge 1)=0.
\end{align*}
We only need to show that $\lim_{\|x\|\to\infty}\P_x(N_x(K)\ge 3\vert Z_\T(K)\ge 1)=0$. First, observe that for any $\delta>0$,
\begin{align}
&\P_x(N_x(K)\ge 3\vert Z_\T(K)\ge 1)\nonumber\\
\le \;& \P_x\left(\frac{\log\J(H_x)}{\log \J(x)}\le \delta\vert Z_\T(K)\ge 1\right)\nonumber\\
&+ \frac{1}{\delta}\E_x\left[\frac{\log\J(H_x)}{\log \J(x)}(N_x(K)-2)_+\ind{\frac{\log\J(H_x)}{\log \J(x)}\ge \delta}\vert Z_\T(K)\ge 1\right].\nonumber
\end{align}
Again, thanks to \eqref{cvgHx}, it suffices to show that 
\begin{equation}\label{upbdNx}
\limsup_{\delta\to0+}\limsup_{\|x\|\to\infty}\frac{1}{\delta}\E_x\left[\frac{\log\J(H_x)}{\log \J(x)}(N_x(K)-2)_+\ind{\frac{\log\J(H_x)}{\log \J(x)}\ge \delta}\vert Z_\T(K)\ge 1\right]=0.
\end{equation}
Let us decompose $Z_\T(K)$ via the most recent common ancestor $\A_x(K)$. To this end, we let $(Z^+(x,K), H_x^+, N_x^+(K), (S_j^+)_{1\le j\le N_x^+(K)})$ be a random vector distributed as $(Z_\T(K), H_x, N_x(K), (S_{u})_{u\in\N_x(K)})$ under $\P_x(\cdot\vert Z_\T(K)\ge 1)$. 
Then,
\begin{equation}\label{decompZ}
Z^+(x,K)\overset{\mathrm{(d)}}{=}\sum_{j=1}^{N_x^+(K)} Z_j^+(S^+_j, K),
\end{equation}
where given $(S_j^+)_{1\le j\le N^+_x(K)}$, the $Z_j^+(S^+_j, K), 1\le j\le N^+_x(K)$ are independent random variables distributed as $Z^+(y, K)\vert_{y=S^+_j}$. Taking expectation on both sides of \eqref{decompZ} yields that
\begin{equation}\label{decompZex}
\frac{\E_x[Z_\T(K)]}{\P_x(Z_\T(K)\ge 1)}=\frac{m_1(x,K)}{u_K(x)}=\E_x\left[\sum_{j=1}^{N^+_x(K)}\frac{m_1(S_j^+,K)}{u_K(S^+_j)}\right].
\end{equation}
For convenience, let us write $m_1^+(x,K)$ for $\frac{m_1(x,K)}{u_K(x)}$. By \eqref{hittingprobab4d} and \eqref{meanL4d}, we already see that
\begin{equation}\label{meanZ+}
m_1^+(x,K)=(1+o_x(1))\c_4 2\sigma^2|K|\log\J(x).
\end{equation}
At the same time, we derive from \eqref{decompZex} that
\begin{align}\label{meandecompZ}
&\frac{m_1^+(x,K)}{\c_4 2\sigma^2|K|\log\J(x)}=\E_x\Bigg[\frac{1}{\log \J(x)}\sum_{u\in \N_x(K)}\frac{m_1^+(S_u, K)}{\c_4 2\sigma^2|K|}\Big\vert Z_\T(K)\ge 1\Bigg]\nonumber\\
\ge\; &\E_x\Bigg[\frac{\log \J(H_x)}{\log \J(x)}\ind{\frac{\log \J(H_x)}{\log \J(x)}\ge \delta}\sum_{u\in \N_x(K)}\frac{m_1^+(S_u, K)}{\c_4 2\sigma^2|K|\log\J(H_x)}\bigg\vert Z_\T(K)\ge 1\Bigg]
\end{align}
for any $\delta>0$. Note that as $\sup_{u\in\N_x(K)}\|S_u-H_x\|\le \rad(\supp (\mu))<\infty$ and $\J(H_x)\ge \J(x)^\delta\gg \rad(K)$, we have 
\[
\bigg\{\frac{\log \J(H_x)}{\log \J(x)}\ge \delta \bigg\}= \bigg\{\frac{\log \J(H_x)}{\log \J(x)}\ge \delta, N_x(K)\ge 2 \bigg\},
\]
and by \eqref{meanZ+}, 
\[
  \frac{m_1^+(S_u, K)}{\c_4 2\sigma^2|K|\log\J(H_x)}=1+o_x(1), \quad \forall u\in\N_x(K).
\]
Then \eqref{meandecompZ} becomes that
\begin{align*}
&\frac{m_1^+(x,K)}{\c_4 2\sigma^2|K|\log\J(x)}=1+o_x(1)\\
&\ge (1+o_x(1))\E_x\left[\frac{\log \J(H_x)}{\log \J(x)}\ind{\frac{\log \J(H_x)}{\log \J(x)}\ge \delta, N_x(K)\ge2}N_x(K)\Big\vert Z_\T(K)\ge 1\right]\\
&= 2(1+o_x(1))\E_x\left[\frac{\log \J(H_x)}{\log \J(x)}\ind{\frac{\log \J(H_x)}{\log \J(x)}\ge \delta}\Big\vert Z_\T(K)\ge 1\right]\\
&+(1+o_x(1))\E_x\left[\frac{\log \J(H_x)}{\log \J(x)}\ind{\frac{\log \J(H_x)}{\log \J(x)}\ge \delta}(N_x(K)-2)_+\Big\vert Z_\T(K)\ge 1\right].
\end{align*}{}
Because of \eqref{cvgHx} and the uniform integrability of $\frac{\log \J(H_x)}{\log \J(x)}$, we have
\[
2\E_x\left[\frac{\log \J(H_x)}{\log \J(x)}\ind{\frac{\log \J(H_x)}{\log \J(x)}\ge \delta}\Big\vert Z_\T(K)\ge 1\right]\xrightarrow{\|x\|\to\infty} \E\big[2U\ind{U\ge \delta}\big]=1-\delta^2.
\]
As a result, we obtain that
\begin{align*}
&(1+o_x(1))\E_x\left[\frac{\log \J(H_x)}{\log \J(x)}\ind{\frac{\log \J(H_x)}{\log \J(x)}\ge \delta}(N_x(K)-2)_+\Big\vert Z_\T(K)\ge 1\right]\\
&\le  1+o_x(1)-(1-\delta^2+o_x(1))=\delta^2+o_x(1),
\end{align*}
which implies \eqref{upbdNx}. We hence finish the proof of \eqref{cvgNx}.
\end{proof}

\subsubsection{$L_2$-Wasserstein distance on probability measures} $ $

In this subsection, we recall the definition of $L_2$-Wasserstein distance on probability measures and its basic properties, which will be used in the proof of Theorem \ref{4d}. Let $E$ be a separable Banach space with norm $|\cdot|$, and let $\mathcal{P}(E)$ denote the set of all probability measures on the Borel $\sigma$-algebra of $E$. In this work, we will take $E=\R$ or $E=\R^d$ with the Euclidean norm. Define
\[
\mathcal{P}_2(E):=\Big\{\mu\in\mathcal{P}(E)\colon \int_E |x|^2\mu(dx)<\infty\Big\}.
\]
For any $\mu,\nu\in\mathcal{P}_2(E)$, the $L_2$-Wasserstein distance (also known as the Mallows distance) between $\mu$ and $\nu$ is defined by
\begin{equation}\label{d2}
d_2(\mu,\nu):=\inf_{X\sim\mu, Y\sim\nu}\sqrt{\E[|X-Y|^2]}
\end{equation}
where the infimum is over all pairs of random variables $(X,Y)\in E^2$ with $X$ distributed as $\mu$ and $Y$ distributed as $\nu$ under $\P$. We recall some basic properties of the $L_2$-Wasserstein distance in the next lemma (see for example the standard reference of Villani~\cite{Villani}).
\begin{lemma}\label{Mallows}
For the $L_2$-Wasserstein distance, the following assertions hold.
\begin{enumerate}
\item The infimum in \eqref{d2} is attained by some $(X,Y)$, called the optimal coupling of $\mu$ and $\nu$.
\item $d_2$ is a metric on $\mathcal{P}_2(E)$.
\item For a sequence of probability measures $(\mu_n)_{n\geq 1}$ and $\mu$ in $\mathcal{P}_2(E)$, 
\begin{align*}
 d_2(\mu_n,\mu)\xrightarrow[n\to \infty]{} 0 & \textrm{ if and only if } \\
& \mu_n\xrightarrow[n\to \infty]{w} \mu \textrm{ and } \int_E |x|^2\mu_n(dx)\xrightarrow[n\to \infty]{} \int_E |x|^2\mu(dx).
\end{align*}
\end{enumerate}
\end{lemma}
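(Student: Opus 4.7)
The plan is to establish the three assertions in sequence using the standard toolkit of optimal transport theory, with $E$ a separable Banach space (so in particular Polish, which is crucial for the gluing step).

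For assertion (1), I would use a compactness argument. The set $\Pi(\mu,\nu)$ of probability measures on $E\times E$ with marginals $\mu$ and $\nu$ is tight (each of $\mu,\nu$ is inner regular on a Polish space) and closed under weak convergence, hence weakly compact by Prokhorov's theorem. The functional $\pi\mapsto \int |x-y|^2\,\pi(\d x,\d y)$ is lower semicontinuous for weak convergence, since $(x,y)\mapsto |x-y|^2$ is continuous and nonnegative; lower semicontinuity follows by approximating from below by bounded continuous truncations $(x,y)\mapsto |x-y|^2\wedge M$ and applying monotone convergence. Any minimizing sequence then admits a weakly convergent subsequence whose limit lies in $\Pi(\mu,\nu)$ and attains the infimum in \eqref{d2}.

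For assertion (2), nonnegativity and symmetry are immediate from \eqref{d2}. The implication $d_2(\mu,\nu)=0\Rightarrow \mu=\nu$ follows from (1): the optimal coupling must satisfy $X=Y$ almost surely, forcing $\mu=\nu$. The key step is the triangle inequality, which I would establish via the gluing lemma. Given $\mu_1,\mu_2,\mu_3\in\mathcal{P}_2(E)$, let $\pi_{12}$ and $\pi_{23}$ be optimal couplings of $(\mu_1,\mu_2)$ and $(\mu_2,\mu_3)$ respectively. Using regular conditional probabilities (available since $E$ is Polish), construct a probability measure $\pi$ on $E^3$ whose $(1,2)$- and $(2,3)$-marginals are $\pi_{12}$ and $\pi_{23}$, obtained by conditioning both couplings on their common $\mu_2$-marginal and taking conditional independence. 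Writing $X_1-X_3=(X_1-X_2)+(X_2-X_3)$ under $\pi$ and applying Minkowski's inequality in $L^2$ gives
\[
d_2(\mu_1,\mu_3)\leq \sqrt{\E[|X_1-X_3|^2]}\leq \sqrt{\E[|X_1-X_2|^2]}+\sqrt{\E[|X_2-X_3|^2]}=d_2(\mu_1,\mu_2)+d_2(\mu_2,\mu_3).
\]

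For assertion (3), both directions are standard consequences of (1) combined with Skorokhod's representation. In the forward direction, if $(X_n,X)$ realizes an optimal coupling with $\E[|X_n-X|^2]\to 0$, then $X_n\to X$ in $L^2$ and hence in probability, so $\mu_n\to\mu$ weakly; the reverse triangle inequality in $L^2$ then yields $\||X_n|\|_{L^2}\to\||X|\|_{L^2}$, i.e.\ convergence of second moments. In the backward direction, weak convergence lets one apply Skorokhod's representation theorem to build $Y_n\sim\mu_n$ and $Y\sim\mu$ on a common probability space with $Y_n\to Y$ almost surely; convergence of second moments together with $|Y_n|^2\to|Y|^2$ a.s.\ yields uniform integrability of $(|Y_n|^2)$ by Scheff\'e's lemma, and Vitali's convergence theorem upgrades the a.s.\ convergence to $\E[|Y_n-Y|^2]\to 0$, giving $d_2(\mu_n,\mu)\to 0$. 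The main technical point is the gluing construction in (2), but it is routine in the Polish setting; the rest of the argument is essentially bookkeeping with Prokhorov and Vitali. Since the statement is entirely classical, one could also simply refer to \cite{Villani}, but the self-contained sketch above suffices for the uses of $d_2$ in the proof of Theorem~\ref{4d}.
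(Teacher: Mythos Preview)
Your sketch is correct and follows the standard optimal-transport arguments. The paper itself does not prove this lemma at all: it simply states the result and cites Villani~\cite{Villani} as a reference, so your self-contained outline (which you also note could be replaced by that citation) goes well beyond what the paper provides.
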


Here $\xrightarrow{w}$ means the weak convergence of probability measures. Lemma \ref{Mallows} shows in particular that the convergence in $(\mathcal{P}_2(E), d_2)$ implies the convergence in law. On the other hand, Proposition \ref{MRCA}, together with \eqref{2momHx}, implies that the $L_2$-Wasserstein distance between the law of $\frac{\log\J(H_x)}{\log \J(x)}$ under $\P_x(\cdot\vert Z_\T(K)\ge 1)$ and the uniform distribution on $(0,1)$ converges to zero as $\|x\|\to\infty$.

For any random variable $\xi$, we use $\L(\xi)$ to denote its law. 

\begin{lemma}\label{attainabled2}
Suppose that the random variables $(\xi_i)_{i\ge0}$ all have finite second moment. Then we can find a sequence of random variables $(\eta_i)_{i\ge0}$ such that $\L(\eta_i)=\L(\xi_i)$ for all $i\ge0$, and that
\[
d_2(\L(\xi_i),\L(\xi_0))=\sqrt{\E[(\eta_i-\eta_0)^2]}, \quad \forall i\ge 1.
\]
\end{lemma}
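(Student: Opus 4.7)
The plan is to use a standard gluing/disintegration argument, combining the pairwise optimal couplings that are provided by Lemma~\ref{Mallows}(1) into a single coupling sharing the same copy of $\xi_0$.

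First, for each $i\ge 1$, I would invoke Lemma~\ref{Mallows}(1) to obtain an optimal pair of random variables $(X_i,Y_i)$, defined on some probability space $(\Omega_i,\mathcal{F}_i,\P_i)$, with marginals $X_i\sim\L(\xi_i)$ and $Y_i\sim\L(\xi_0)$, and such that
\[
\E_{\P_i}\bigl[|X_i-Y_i|^2\bigr]=d_2(\L(\xi_i),\L(\xi_0))^2.
\]
Since $E$ is a separable Banach space, in particular a Polish space, its Borel $\sigma$-algebra is standard. Hence a regular conditional distribution $\kappa_i(y,\d x)$ of $X_i$ given $Y_i=y$ exists, i.e.\ the joint law of $(X_i,Y_i)$ factorizes as $\L(\xi_0)(\d y)\,\kappa_i(y,\d x)$.

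Next I would build the sequence $(\eta_i)_{i\ge 0}$ on a single probability space as follows. Take $\eta_0$ with law $\L(\xi_0)$, and conditionally on $\eta_0$, draw $\eta_i$ from $\kappa_i(\eta_0,\cdot)$ for each $i\ge 1$, with the $\eta_i$'s mutually independent given $\eta_0$ (the precise joint law of $(\eta_i)_{i\ge 1}$ is irrelevant, as only the pairwise joint laws $(\eta_i,\eta_0)$ matter). By construction, the pair $(\eta_i,\eta_0)$ has the same joint distribution as $(X_i,Y_i)$, so in particular $\eta_i\sim\L(\xi_i)$ and
\[
\E\bigl[(\eta_i-\eta_0)^2\bigr]=\E_{\P_i}\bigl[|X_i-Y_i|^2\bigr]=d_2(\L(\xi_i),\L(\xi_0))^2,
\]
which is exactly the identity claimed in the lemma.

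There is essentially no obstacle beyond citing the existence of regular conditional distributions, guaranteed by the Polish structure of $E$. In the real-valued case that is actually used later in the paper, one could bypass the disintegration altogether and take the explicit quantile coupling $\eta_i := F_i^{-1}(U)$, $\eta_0 := F_0^{-1}(U)$, where $U$ is uniform on $(0,1)$ and $F_i$ is the cumulative distribution function of $\xi_i$; the well-known fact that quantile couplings are $L_2$-optimal on $\R$ then gives the conclusion without any measure-theoretic gluing.
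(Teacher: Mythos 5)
Your proposal is correct and follows essentially the same route as the paper: the paper also glues the pairwise optimal couplings along the common marginal $\L(\xi_0)$ (via Villani's gluing lemma applied recursively, then the Kolmogorov extension theorem), which is precisely the disintegration-plus-conditional-independence construction you describe, the only caveat being that your one-shot infinite construction implicitly relies on the Ionescu--Tulcea/Kolmogorov extension theorem to realize the countable family of kernels on one space. Your closing remark about the quantile coupling on $\R$ is a valid and more elementary alternative for the real-valued case actually used in the paper, though the paper does not take that route.
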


\begin{proof}
  We first take the optimal coupling $(\eta_1, \eta_0)$ for $\L(\xi_1)$ and $\L(\xi_0)$. Then using the gluing lemma (see Lemma 7.6 in \cite{Villani}), for any integer $n\geq 2$, we can find by recurrence a finite sequence of random variables $(\eta_i)_{i\le n}$ such that $\L(\eta_i)=\L(\xi_i)$ for all $i\le n$, and that
  \[
  d_2(\L(\xi_i),\L(\xi_0))=\sqrt{\E[(\eta_i-\eta_0)^2]}, \quad \forall i\le n.
  \]
  Finally, the full result follows by the Kolmogorov extension theorem.
\end{proof}

\subsubsection{Proof of Theorem \ref{4d}} $ $

\vspace{0.2cm}

\noindent \textbf{Definition.} For all $x\in \Z^4$, let 
\[
  (L^+(x,K), Z^+(x,K), H_x^+, N_x^+(K), (S_j^+)_{1\le j\le N_x^+(K)})
\]
be a random vector defined under $\P$ and distributed as 
\[
  (L_K, Z_\T(K), H_x, N_x(K), (S_{u})_{u\in\N_x(K)})
\] 
under $\P_x(\cdot\vert Z_\T(K)\ge 1)$.  

Then Theorem \ref{4d} can be deduced from the following convergence
\begin{equation}\label{maincvgd2}
d_2\left(\L\left(\frac{L^+(x,K)}{2\sigma^2\c_4\cap(K)\log \J(x)}, \frac{Z^+(x,K)}{2\sigma^2\c_4|K|\log \J(x)}\right), \L(Y,Y)\right)\xrightarrow{\|x\|\to\infty} 0,
\end{equation}
where $Y$ is an exponential random variable with parameter $1$. 

For simplicity, we will only present the proof for the convergence
\begin{equation}\label{keycvgd2}
d_2\left(\L\left(\frac{Z^+(x,K)}{2\sigma^2\c_4|K|\log \J(x)}\right), \L(Y)\right)\xrightarrow{\|x\|\to\infty}0.
\end{equation}
The joint convergence \eqref{maincvgd2} can be derived in the same way, starting from the following identity similar to \eqref{decompZ},
\[
(L^+(x,K), Z^+(x,K))\overset{\mathrm{(d)}}{=}\sum_{j=1}^{N^+_x(K)}(L_j^+(S_j^+, K), Z^+_j(S_j^+,K)),
\]
where  given $(S_j^+)_{1\le j\le N^+_x(K)}$, the $(L_j^+(S_j^+,K), Z_j^+(S^+_j, K)), 1\le j\le N^+_x(K)$ are independent random variables distributed as $(L^+(y,K), Z^+(y, K))\vert_{y=S^+_j}$.

Let $\J_+(x)=2+\J(x)$ so that $\log \J_+(x) \ge \log 2>0$. Define
\begin{equation}\label{eq:defi-U_x^+}
  U_x^+:=\frac{\log \J_+(H^+_x)}{\log \J_+(x)}.
\end{equation}

\begin{proof}[Proof of \eqref{keycvgd2}] 
We write
\[
b_x:=d_2\left(\L\left(\frac{Z^+(x,K)}{2\sigma^2\c_4|K|\log \J_+(x)}\right), \L(Y)\right)^2, \quad \forall  x\in \Z^4.
\]
As $\frac{\log\J(x)}{\log\J_+(x)}=1+o_x(1)$, it suffices to show that $b_x$ goes to zero as $\|x\|\to\infty$.

First of all,  $b_x$ is well-defined since $\E[(\frac{Z^+(x,K)}{2\sigma^2\c_4|K|\log \J_+(x)})^2]<\infty$. In fact, it follows from \eqref{hittingprobab4d} and \eqref{sup2momZ} that
\begin{equation}\label{supZx}
\overline{\mathbf{m}}_2:=\sup_{x\in\Z^4}\E\left[\left(\frac{Z^+(x,K)}{2\sigma^2\c_4|K|\log \J_+(x)}\right)^2\right]<\infty.
\end{equation}
This implies that
\begin{equation}\label{supbx}
\overline{b}:=\sup_{x\in\Z^4}b_x<\infty.
\end{equation}

Next, we consider the decomposition \eqref{decompZ}, where $N_x^+(K)=2$ with high probability according to \eqref{cvgNx}. 
Let  $(Z^*(x,K), H_x^*, (S^*_1, S^*_2))$ be the random vector distributed as $(Z_\T(K), H_x, (S_u)_{u\in\N_x(K)})$ under $\P_x(\cdot\vert Z_\T(K)\ge 1, N_x(K)=2)$. Then 
\begin{equation}\label{2subtree}
\frac{Z^*(x,K)}{2\sigma^2\c_4|K|\log \J_+(x)}\overset{\mathrm{(d)}}{=}U^*_x\left(\frac{Z^+_1(S_1^*, K)}{2\sigma^2\c_4|K|\log \J_+(H^*_x)}+\frac{Z^+_2(S_2^*,K)}{2\sigma^2\c_4|K|\log \J_+(H^*_x)}\right).
\end{equation}
where $U^*_x:=\frac{\log \J_+(H_x^*)}{\log \J_+(x)}$ and, given $(U^*_x, S_1^*, S_2^*)$, the $Z_j^+(S_j^*,K)$, $j=1,2$ are independent random variables distributed as $Z^+(y,K)\vert_{y=S_j^*}$ respectively.

We are going to replace $Z^+(x,K)$ in $b_x$ by $Z^*(x,K)$ in order to use \eqref{2subtree}. Now we claim that 
\begin{align}
\sup_{x\in K^c}\E\left[\left(\frac{Z^*(x,K)}{2\sigma^2\c_4|K|\log \J_+(x)}\right)^2\right]+\sup_{x\in K^c}\E[(U^*_x)^2]<\;&\infty,\label{supUx}\\
d_2\left(\L\left(\frac{Z^+(x,K)}{2\sigma^2\c_4|K|\log \J_+(x)}\right),\L\left(\frac{Z^*(x,K)}{2\sigma^2\c_4|K|\log \J_+(x)}\right)\right)=\;&o_x(1),\label{2subtreeZ}\\
d_2(\L(U^*_x), \L(U))=\;&o_x(1),\label{2subtreeU}
\end{align}
where $U$ is uniformly distributed in $(0,1)$. The proof of \eqref{supUx}, \eqref{2subtreeZ} and \eqref{2subtreeU} will be given at the end of this section. 

By Lemma \ref{attainabled2}, we can choose some version of $(U^*_x)_{x\in \Z^4}$ and $U$ such that 
\begin{equation}\label{d2U}
d_2(\L(U^*_x), \L(U))^2=\E[(U^*_x-U)^2], \quad \forall x\in\Z^4.
\end{equation}
For the same reason, for each $j=1$ or 2, we can choose independently some version of $(Z^+_j(x,K))_{x,j}$ and $Y_j$ such that for any $x\in \Z^4$,
\[
b_x=d_2\left(\L\left(\frac{Z^+_j(x,K)}{2\sigma^2\c_4|K|\log \J_+(x)}\right),\L(Y_j)\right)^2=\E\Bigg[\left(\frac{Z^+_j(x,K)}{2\sigma^2\c_4|K|\log \J_+(x)}-Y_j\right)^2\Bigg].
\]
Here, $(Z^+_1(x,K))_{x}$ and $(Z^+_2(x,K))_{x}$ are i.i.d.~copies of $(Z^+(x,K))_x$, together with $Y_1$ and $Y_2$ being i.i.d.~copies of $Y$. There is independence between $((Z^+_1(x,K))_{x},Y_1)$, $((Z^+_2(x,K))_{x},Y_2)$ and $((U_x^*, S^*_1, S^*_2)_x, U)$.
Recall that $\L(Y)=\L(U(Y_1+Y_2))$. 

Now we are ready to bound $b_x$. In view of \eqref{2subtreeZ} and \eqref{2subtree}, one sees that
\begin{align}
&b_x =  d_2\left(\L\left(\frac{Z^*(x,K)}{2\sigma^2\c_4|K|\log \J_+(x)}\right), \L(Y)\right)^2+o_x(1)\nonumber\\
= \;&d_2\left(\L\left(U^*_x\left(\frac{Z^+_1(S_1^*, K)}{2\sigma^2\c_4|K|\log \J_+(H^*_x)}+\frac{Z^+_2(S_2^*,K)}{2\sigma^2\c_4|K|\log \J_+(H^*_x)}\right)\right), \L(U(Y_1+Y_2))\right)^2 +o_x(1)\nonumber\\
\le \;&\E\left[\left(U^*_x\left(\frac{Z^+_1(S_1^*, K)}{2\sigma^2\c_4|K|\log \J_+(H^*_x)}+\frac{Z^+_2(S_2^*,K)}{2\sigma^2\c_4|K|\log \J_+(H^*_x)}\right)-U(Y_1+Y_2)\right)^2\right]+o_x(1).\nonumber
\end{align}
It thus follows that
\begin{align}
b_x&\le \sum_{j=1}^2\E\left[(U^*_x\frac{Z^+_j(S_j^*, K)}{2\sigma^2\c_4|K|\log \J_+(H^*_x)}- UY_j)^2\right]\nonumber\\
&+2\E\left[\left(U^*_x\frac{Z^+_1(S_1^*, K)}{2\sigma^2\c_4|K|\log \J_+(H^*_x)}- UY_1\right)\left(U^*_x\frac{Z^+_2(S_2^*, K)}{2\sigma^2\c_4|K|\log \J_+(H^*_x)}- UY_2\right)\right]+o_x(1).\label{upbdbx}
\end{align}
Recall that given $(U_x^*, S_1^*, S_2^*, U)$, the random vectors $(Z^+_j(S_j^*,K), Y_j)$, $j=1,2$ are independent, and they satisfy that for all $j=1,2$,
\begin{align*}
\E\big[Z_j^+(S_j^*,K)\vert U_x^*, S_1^*, S_2^*, U\big]=& \,m^+_1(S_j^*, K),\\
\E\big[Y_j\vert U_x^*, S_1^*, S_2^*, U\big]=& \,\E\big[Y_j]=1.
\end{align*}
This tells us that
\begin{align*}
&\,\E\left[(U^*_x\frac{Z^+_1(S_1^*, K)}{2\sigma^2\c_4|K|\log \J_+(H^*_x)}- UY_1)(U^*_x\frac{Z^+_2(S_2^*, K)}{2\sigma^2\c_4|K|\log \J_+(H^*_x)}- UY_2)\right]\\
=&\,\E\left[(U^*_x\frac{m_1^+(S_1^*,K)}{2\sigma^2\c_4|K|\log\J_+(H^*_x)}-U)(U^*_x\frac{m_1^+(S_2^*,K)}{2\sigma^2\c_4|K|\log\J_+(H^*_x)}-U)\right]\\
=&\,\E\big[(U^*_x\delta_1+(U^*_x-U))(U^*_x\delta_2+(U^*_x-U))\big],
\end{align*}
where $\delta_j:=\frac{m_1^+(S_j^*,K)}{2\sigma^2\c_4|K|\log\J_+(H^*_x)}-1$, for $j=1,2$. Using the elementary fact that $(a_1+b)(a_2+b)\le a_1^2+a^2_2+2b^2$ for any $a_1,a_2,b\in\R$, we get that
\begin{align*}
&\E\left[(U^*_x\frac{Z^+_1(S_1^*, K)}{2\sigma^2\c_4|K|\log \J_+(H^*_x)}- UY_1)(U^*_x\frac{Z^+_2(S_2^*, K)}{2\sigma^2\c_4|K|\log \J_+(H^*_x)}- UY_2)\right]\\
\le \;& \E\left[(U^*_x\delta_1)^2\right]+ \E\left[(U^*_x\delta_2)^2\right]+2\E\left[(U^*_x-U)^2\right]\\
=\;&\E\left[(U^*_x\delta_1)^2\right]+ \E\left[(U^*_x\delta_2)^2\right]+2d_2(\L(U^*_x), \L(U))^2
\end{align*}
by \eqref{d2U}.
Note that $\sup_{j=1,2}\|S_j^*-H_x^*\|\le \rad(\supp (\mu))<\infty$. So, by \eqref{meanZ+}, on the event $\{\J_+(H_x^*)\ge \J_+(x)^\delta\}=\{U^*_x\ge \delta\}$ with some $\delta>0$ and $\|x\|\gg1$,
\[
\frac{m_1^+(S_j^*,K)}{2\sigma^2\c_4|K|\log\J_+(H^*_x)}=\frac{m_1^+(S_j^*,K)}{2\sigma^2\c_4|K|\log\J(S_j^*)}\frac{\log \J(S_j^*)}{\log \J_+(H_x^*)}=1+o_x(1), 
\]
for $j=1,2$. On the other hand, by \eqref{hittingprobab4d} and \eqref{meanZ}, one always has the uniform bound
\[
\sup_{j=1,2}\frac{m_1^+(S_j^*,K)}{2\sigma^2\c_4|K|\log\J_+(H^*_x)}\le \sup_{x\in\Z^4, e\in \supp(\mu)}\frac{m^+_1(x+e,K)}{2\sigma^2\c_4|K|\log\J_+(x)}=:C<\infty.
\]
As a consequence, 
\begin{align*}
&\E\left[(U^*_x\frac{Z^+_1(S_1^*, K)}{2\sigma^2\c_4|K|\log \J_+(H^*_x)}- UY_1)(U^*_x\frac{Z^+_2(S_2^*, K)}{2\sigma^2\c_4|K|\log \J_+(H^*_x)}- UY_2)\right]\\
\le\; & 2 \E[(U^*_x)^2\ind{U^*_x\ge \delta}]o_x(1)+ 2(C+1)^2 \E[(U^*_x)^2\ind{U^*_x<\delta}]+2d_2(\L(U^*_x),\L(U))^2,
\end{align*}
which vanishes as $\|x\|\to\infty$ and then $\delta\to0+$ because of \eqref{2subtreeU} and Lemma \ref{Mallows}. 
Going back to \eqref{upbdbx}, we obtain that
\begin{align}
b_x\le & \sum_{j=1}^2\E\Bigg[\bigg(U^*_x\frac{Z^+_j(S_j^*, K)}{2\sigma^2\c_4|K|\log \J_+(H^*_x)}- UY_j\bigg)^2\Bigg]+o_x(1)\nonumber\\
=&\sum_{j=1}^2\E\Bigg[\bigg(U^*_x\bigg(\frac{Z^+_j(S_j^*, K)}{2\sigma^2\c_4|K|\log \J_+(H^*_x)}-Y_j\bigg)+(U^*_x-U)Y_j\bigg)^2\Bigg]+o_x(1).\label{goodupbdbx}
\end{align}
For any $j\in\{1,2\}$, observe that 
\begin{align*}
&\E\Bigg[\bigg(U^*_x\bigg(\frac{Z^+_j(S_j^*, K)}{2\sigma^2\c_4|K|\log \J_+(H^*_x)}-Y_j\bigg)+(U^*_x-U)Y_j\bigg)^2\Bigg]\\
=&\,\E\Bigg[(U^*_x)^2\bigg(\frac{Z^+_j(S_j^*, K)}{2\sigma^2\c_4|K|\log \J_+(H^*_x)}-Y_j\bigg)^2\Bigg]+\E\big[(U^*_x-U)^2Y_j^2\big]\\
&\qquad+2\E\left[U^*_x\bigg(\frac{Z^+_j(S_j^*, K)}{2\sigma^2\c_4|K|\log \J_+(H^*_x)}-Y_j\bigg)(U^*_x-U)Y_j\right],
\end{align*}
where 
\[
\E\big[(U^*_x-U)^2Y_j^2\big]=\E[Y_j^2]\E\big[(U^*_x-U)^2\big]=2\E[(U_x^*-U)^2]=o_x(1),
\]
by \eqref{2subtreeU} and \eqref{d2U}. Moreover, the Cauchy-Schwarz inequality implies that
\begin{align*}
&\E\left[U^*_x\bigg(\frac{Z^+_j(S_j^*, K)}{2\sigma^2\c_4|K|\log \J_+(H^*_x)}-Y_j\bigg)(U^*_x-U)Y_j\right]\\
&\le\E\Bigg[(U^*_x)^2\bigg(\frac{Z^+_j(S_j^*, K)}{2\sigma^2\c_4|K|\log \J_+(H^*_x)}-Y_j\bigg)^2\Bigg]^{\frac12}\E\big[(U^*_x-U)^2Y_j^2\big]^{\frac12}=o_x(1),
\end{align*}
because
\begin{align}
&\sup_{x\in K^c}\E\Bigg[(U^*_x)^2\bigg(\frac{Z^+_j(S_j^*, K)}{2\sigma^2\c_4|K|\log \J_+(H^*_x)}-Y_j\bigg)^2\Bigg]\nonumber\\
&\le 2\sup_{x\in K^c} \E\Bigg[(U^*_x)^2\bigg(\bigg(\frac{Z^+_j(S_j^*, K)}{2\sigma^2\c_4|K|\log \J_+(H^*_x)}\bigg)^2+Y_j^2\bigg)\Bigg]\nonumber\\
&\le 2\sup_{x\in K^c}\E\big[(U^*_x)^2\big]\sup_{z\in\Z^4, e\in\supp(\mu)}\E\left[\left(\frac{Z^+(z+e,K)}{2\sigma^2\c_4|K|\log\J_+(z)}\right)^2+2\right]<\infty, \label{supZ+Y}
\end{align}
by \eqref{supUx} and \eqref{supZx}. Then \eqref{goodupbdbx} becomes 
\begin{align*}
b_x\le & \sum_{j=1}^2\E\Bigg[(U^*_x)^2 \bigg(\frac{Z^+_j(S_j^*, K)}{2\sigma^2\c_4|K|\log \J_+(H^*_x)}-Y_j\bigg)^2\Bigg]+o_x(1)\\
=&\sum_{j=1}^2\E\Bigg[(U^*_x)^2\bigg((1+\eta_j)\bigg(\frac{Z^+_j(S_j^*, K)}{2\sigma^2\c_4|K|\log \J_+(S^*_j)}-Y_j\bigg)+\eta_jY_j\bigg)^2\Bigg]+o_x(1),
\end{align*}
where $\eta_j=\frac{\log\J_+(S^*_j)}{\log \J_+(H^*_x)}-1$ for $j=1,2$. It is obvious that $\eta_j $ converges in probability to zero as $\|x\|\to\infty$. Moreover, we observe that
\begin{equation}\label{eq:eta-bar-defi}
  |\eta_j|\le \sup_{z\in\Z^4, e\in\supp(\mu)}\frac{\log\J_+(z+e)}{\log \J_+(z)}+1=:\overline{\eta}<\infty.
\end{equation}
Therefore, one can see from \eqref{supUx} that  $U_x^*\eta_j$ is $L^2$-bounded and $U_x^*\eta_j$ converges in probability to zero. By independence, one deduces that
\begin{equation}\label{smallsquareterm}
\E\big[(U_x^*)^2(\eta_j Y_j)^2\big]= \E\big[(U_x^* \eta_j )^2\big]\E\big[Y_j^2\big] \to 0
\end{equation}
as $\|x\|\to\infty$. Next, we see that by the Cauchy-Schwarz inequality,
\begin{align*}
&\E\left[(U^*_x)^2(1+\eta_j)\bigg(\frac{Z^+_j(S_j^*, K)}{2\sigma^2\c_4|K|\log \J_+(S^*_j)}-Y_j\bigg)\eta_j Y_j\right]\\
\le \; & (1+\overline{\eta}) \E\left[U^*_x\bigg(\frac{Z^+_j(S_j^*, K)}{2\sigma^2\c_4|K|\log \J_+(S^*_j)}-Y_j\bigg)\times U^*_x\eta_j Y_j\right]\\ 
\le \; & (1+\overline{\eta}) \E\Bigg[(U^*_x)^2\bigg(\frac{Z^+_j(S_j^*, K)}{2\sigma^2\c_4|K|\log \J_+(S^*_j)}-Y_j\bigg)^2\Bigg]^{\frac12}\E\left[(U^*_x\eta_j Y_j)^2\right]^{\frac12}.
\end{align*}
In a similar manner as in \eqref{supZ+Y}, we can verify that 
\[
  \E\Bigg[(U^*_x)^2\bigg(\frac{Z^+_j(S_j^*, K)}{2\sigma^2\c_4|K|\log \J_+(S^*_j)}-Y_j\bigg)^2\Bigg] 
\]
is uniformly bounded. Combined with \eqref{smallsquareterm}, it follows that
\[
  \E\left[(U^*_x)^2(1+\eta_j)\bigg(\frac{Z^+_j(S_j^*, K)}{2\sigma^2\c_4|K|\log \J_+(S^*_j)}-Y_j\bigg)\eta_j Y_j\right] =o_x(1).
\]
We hence establish that
\begin{align*}
b_x\le &\sum_{j=1}^2\E\left[(U^*_x)^2(1+\eta_j)^2 \bigg(\frac{Z^+_j(S_j^*, K)}{2\sigma^2\c_4|K|\log \J_+(S^*_j)}-Y_j\bigg)^2 \right] +o_x(1)\\
=&\sum_{j=1}^2\E\left[(U^*_x)^2(1+\eta_j)^2\E\bigg[\bigg(\frac{Z^+_j(S_j^*, K)}{2\sigma^2\c_4|K|\log \J_+(S^*_j)}-Y_j\bigg)^2\bigg\vert U^*_x, S_1^*, S_2^*\bigg]\right]+o_x(1)\\
=&\sum_{j=1}^2\E\left[(U^*_x)^2(1+\eta_j)^2 b_{S^*_j}\right]+o_x(1).
\end{align*}
Thanks to \eqref{supbx}, we set $b:=\limsup_{\|x\|\to\infty}b_x\in[0,\infty)$. So, for any $\epsilon>0$, there exists $M_\epsilon>1$ such that
\[
b_x\le b+\epsilon, \quad \forall \|x\|\ge M_\epsilon.
\]
As a result, recalling that $U^*_x(1+\eta_j)=\log\J_+(S^*_j)/\log\J_+(x)$, we get
\begin{align*}
b_x\le & \sum_{j=1}^2\E\left[(U^*_x)^2(1+\eta_j)^2(b+\epsilon)\ind{\|S^*_j\|\ge M_\epsilon}\right]+\sum_{j=1}^2 \E\bigg[\bigg(\frac{\log\J_+(S^*_j)}{\log\J_+(x)}\bigg)^2\overline{b}\ind{\|S^*_j\|<M_\epsilon}\bigg]+o_x(1)\\
\le &\,(b+\epsilon) \sum_{j=1}^2\E\left[(U^*_x)^2(1+\eta_j)^2\right]+2\overline{b}\,\frac{\sup_{\|z\|<M_\epsilon}(\log\J_+(z))^2}{(\log\J_+(x))^2}+o_x(1).
\end{align*}
Taking $\limsup_{\|x\|\to\infty}$ on both sides yields that
\[
b\le (b+\epsilon)\sum_{j=1}^2\limsup_{\|x\|\to\infty}\E\left[(U^*_x)^2(1+\eta_j)^2\right].
\]
It is known from Lemma \ref{Mallows} and \eqref{2subtreeU} that 
\[
\lim_{\|x\|\to\infty}\E[(U^*_x)^2]=\E[U^2]=\frac13.
\]
As $\eta_j\to0$ in probability and $\eta_j$ is bounded, we have for all $j=1,2$,
\[
\lim_{\|x\|\to\infty}\E[(U^*_x)^2(1+\eta_j)^2]=\frac13.
\]
This leads to 
\[
b\le \frac23 (b+\epsilon), \quad \forall \epsilon>0.
\]
As $b\in[0,\infty)$, we must have $b=0$. In other words, $\limsup_{\|x\|\to\infty}b_x=0$. The proof of \eqref{keycvgd2} is therefore complete.
\end{proof}

It remains to verify \eqref{supUx}, \eqref{2subtreeZ} and \eqref{2subtreeU}. Note that the proofs of \eqref{2subtreeZ} and \eqref{2subtreeU} are similar. 

\begin{proof}[Proof of \eqref{supUx}] 
Recall the definition \eqref{eq:defi-U_x^+} of $U^+_x$. Proposition \ref{MRCA} shows that 
\[
\sup_{x\in\Z^4}\E[(U^+_x)^2]<\infty.
\]
We will bound $\E[(U^*_x)^2]$ by comparing $U^*_x$ with $U^+_x$. Observe that
\begin{align*}
\E[(U^*_x)^2]=&\,\frac{\E_x[(\frac{\log\J_+(H_x)}{\log\J_+(x)})^2\ind{Z_\T(K)\ge1, N_x(K)=2}]}{\P_x(Z_\T(K)\ge 1, N_x(K)=2)}\\
\le &\,\frac{\E_x[(\frac{\log\J_+(H_x)}{\log\J_+(x)})^2\ind{Z_\T(K)\ge1}]}{\P_x(N_x(K)=2\vert Z_\T(K)\ge 1)\P_x(Z_\T(K)\ge 1)}=\frac{\E[(U^+_x)^2]}{\P_x(N_x(K)=2\vert Z_\T(K)\ge 1)}
\end{align*}
where $\P_x(N_x(K)=2\vert Z_\T(K)\ge 1)=1+o_x(1)$ by \eqref{cvgNx}. For any $x\notin K$, note that
\[
\P_x(N_x(K)=2\vert Z_\T(K)\ge 1)>0.
\]
Therefore, 
\[
\sup_{x\in K^c}\E[(U^*_x)^2]\le \frac{\sup_{x\in\Z^4}\E[(U^+_x)^2]}{\inf_{x\in K^c}\P_x(N_x(K)=2\vert Z_\T(K)\ge 1)}<\infty
\]
In the same way, we also get 
\[
\sup_{x\in K^c}\E\bigg[\left(\frac{Z^*(x,K)}{2\sigma^2\c_4|K|\log \J_+(x)}\right)^2\bigg]<\infty
\]
by use of \eqref{supZx}.
\end{proof}

\begin{proof}[Proof of \eqref{2subtreeU}] 
It follows from Proposition \ref{MRCA} that 
\[
d_2(\L(U^+_x), \L(U))\xrightarrow{\|x\|\to\infty}0.
\]
Since $d_2(\L(U^*_x),\L(U))\le d_2(\L(U^*_x),\L(U^+_x))+d_2(\L(U^+_x), \L(U))$, we only need to show that $d_2(\L(U^*_x),\L(U^+_x))=o_x(1)$. Recall that $U^*_x$ is distributed as $U^+_x$ conditioned on $\{N^+_x(K)=2\}$. Then for any fixed $x\in\Z^4$, we can find some version of independent $U^*_x, U^>_x, N^+_x(K)$ such that
\begin{equation}\label{condU}
U^+_x\overset{\mathrm{(d)}}{=} U^*_x\ind{N^+_x(K)=2}+U^>_x\ind{N^+_x(K)\neq 2},
\end{equation}
where $U^>_x$ is distributed as $U^+_x$ conditioned on $N^+_x(K)\neq 2$. Then, 
\begin{align*}
d_2(\L(U^*_x),\L(U^+_x))^2\le &\,\E\big[( U^*_x\ind{N^+_x(K)=2}+U^>_x\ind{N^+_x(K)\neq 2}-U^*_x)^2\big]\\
=&\,\E\left[(U^>_x-U^*_x)^2\ind{N^+_x(K)\neq 2}\right]\\
\le & \,2\E\left[\big((U^>_x)^2+(U^*_x)^2\big)\ind{N^+_x(K)\neq 2}\right].
\end{align*}
By independence, we see that 
\begin{align*}
d_2(\L(U^*_x),\L(U^+_x))^2\le & \,2 \left(\E \big[(U_x^>)^2\big]+\E\big[(U^*_x)^2\big]\right)\P(N_x^+(K)\neq 2)\\
=& \,2\E\big[(U^>_x)^2\big]\P(N^+_x(K)\neq 2)+2 \E\big[(U_x^*)^2\big]\P(N_x^+(K)\neq 2) .
\end{align*}
\eqref{cvgNx} implies that $\P(N_x^+(K)\neq 2)=o_x(1)$. Together with \eqref{supUx}, we see that 
\[
\E[(U_x^*)^2]\P(N_x^+(K)\neq 2)=o_x(1).
\]
It remains to show that $\E[(U^>_x)^2]\P(N^+_x(K)\neq 2)=o_x(1)$. In fact,
\begin{align*}
\E[(U^>_x)^2]\P(N^+_x(K)\neq 2)=&\,\frac{\E\Big[(U^+_x)^2\ind{N^+_x(K)\neq 2}\Big]}{\P(N^+_x(K)\neq 2)}\P(N^+_x(K)\neq 2)\\
=&\,\E\Big[(U^+_x)^2\ind{N^+_x(K)\neq 2}\Big]\\
\le & \,\E\Big[(U^+_x)^2\ind{U^+_x\ge M}\Big]+M^2\P(N^+_x(K)\neq 2)
\end{align*}
for any $M\ge 1$. Using the uniform integrability of $U_x^+$ obtained in Proposition \ref{MRCA}, we conclude that $\E[(U^>_x)^2]\P(N^+_x(K)\neq 2)=o_x(1)$.
\end{proof}

\begin{proof}[Proof of \eqref{2subtreeZ}] 
First, let us proceed in the same way as in the proof of \eqref{2subtreeU}. For any fixed $x\in\Z^4$, we take some version of independent $Z^*(x,K), Z^>(x,K), N^+_x(K)$ such that
\[
Z^+(x,K)\overset{\mathrm{(d)}}{=} Z^*(x,K)\ind{N^+_x(K)=2}+Z^>(x,K)\ind{N^+_x(K)\neq2},
\]
where $Z^>(x,K)$ is distributed as $Z^+(x,K)$ conditioned on $N^+_x(K)\neq 2$. Then it follows that
\begin{align*}
&d_2\left(\L\left(\frac{Z^+(x,K)}{2\sigma^2\c_4|K|\log \J_+(x)}\right),\L\left(\frac{Z^*(x,K)}{2\sigma^2\c_4|K|\log \J_+(x)}\right)\right)^2\\
&\le 2\E\left[\bigg(\frac{Z^*(x,K)}{2\sigma^2\c_4|K|\log\J_+(x)}\bigg)^2+\bigg(\frac{Z^>(x,K)}{2\sigma^2\c_4|K|\log\J_+(x)}\bigg)^2\right]\P(N^+_x(K)\neq2).
\end{align*}
By \eqref{supUx} and \eqref{cvgNx}, 
\[
\E\left[\bigg(\frac{Z^*(x,K)}{2\sigma^2\c_4|K|\log\J_+(x)}\bigg)^2\right]\P(N^+_x(K)\neq2)=o_x(1).
\]
It remains to check that
\[
\E\left[\bigg(\frac{Z^>(x,K)}{2\sigma^2\c_4|K|\log\J_+(x)}\bigg)^2\right]\P(N^+_x(K)\neq2)=o_x(1).
\]
From now on, we will use different arguments compared to the proof of \eqref{2subtreeU}. Observe that
\begin{align*}
&\E\left[\bigg(\frac{Z^>(x,K)}{2\sigma^2\c_4|K|\log\J_+(x)}\bigg)^2\right]\P(N^+_x(K)\neq2)=\E\left[\bigg(\frac{Z^+(x,K)}{2\sigma^2\c_4|K|\log\J_+(x)}\bigg)^2\ind{N^+_x(K)\neq2}\right]\\
&=\E\left[\bigg(\frac{Z^+(x,K)}{2\sigma^2\c_4|K|\log\J_+(x)}\bigg)^2\right]-\E\left[\bigg(\frac{Z^+(x,K)}{2\sigma^2\c_4|K|\log\J_+(x)}\bigg)^2\ind{N^+_x(K)=2}\right]\\
&=\E\left[\bigg(\frac{Z^+(x,K)}{2\sigma^2\c_4|K|\log\J_+(x)}\bigg)^2\right]-\E\left[\bigg(\frac{Z^*(x,K)}{2\sigma^2\c_4|K|\log\J_+(x)}\bigg)^2\right]\P(N^+_x(K)=2).
\end{align*}
By \eqref{2momZ} in Lemma \ref{2mom}, one sees that
\[
\mathbf{m}_2(x,K):=\E\left[\bigg(\frac{Z^+(x,K)}{2\sigma^2\c_4|K|\log\J_+(x)}\bigg)^2\right]=2+o_x(1).
\]
To conclude, we only need to verify that
\[
\E\left[\bigg(\frac{Z^*(x,K)}{2\sigma^2\c_4|K|\log\J_+(x)}\bigg)^2\right]=2+o_x(1).
\]
Applying \eqref{2subtree}, we observe that
\begin{align*}
&\E\left[\bigg(\frac{Z^*(x,K)}{2\sigma^2\c_4|K|\log\J_+(x)}\bigg)^2\right]=\E\left[(U^*_x)^2\left(\frac{Z^+_1(S_1^*, K)}{2\sigma^2\c_4|K|\log \J_+(H^*_x)}+\frac{Z^+_2(S_2^*,K)}{2\sigma^2\c_4|K|\log \J_+(H^*_x)}\right)^2\right]\\
&=\E\Bigg[(U^*_x)^2\bigg(\sum_{j=1}^2(1+\eta_j)^2\mathbf{m}_2(S_j^*,K)\bigg)+2(U^*_x)^2(1+\eta_1)(1+\eta_2)\mathbf{m}_1(S^*_1,K)\mathbf{m}_1(S_2^*,K)\Bigg],
\end{align*}
where, for $j=1,2$,
\[
\eta_j=\frac{\log\J_+(S^*_j)}{\log \J_+(H^*_x)}-1, 
\]
and
\[
\mathbf{m}_1(x,K):=\E\left[\frac{Z^+(x,K)}{2\sigma^2\c_4|K|\log\J_+(x)}\right]=1+o_x(1).
\]
Recall the definition of $\overline{\mathbf{m}}_2$ in \eqref{supZx}, and that of $\overline{\eta}$ in \eqref{eq:eta-bar-defi}. We also set
\[
\overline{\mathbf{m}}_1:=\sup_{x\in\Z^4}\E\left[\frac{Z^+(x,K)}{2\sigma^2\c_4|K|\log\J_+(x)}\right]<\infty.
\]
Note that on the one hand, when $\|x\|\to\infty$, $U^*_x\xrightarrow[]{\mathrm{(d)}} U$ and $1+\eta_j$ converges in probability to 1, accordingly
\[
(U^*_x)^2\sum_{j=1}^2(1+\eta_j)^2\mathbf{m}_2(S_j^*,K)+2(U^*_x)^2(1+\eta_1)(1+\eta_2)\mathbf{m}_1(S^*_1,K)\mathbf{m}_1(S_2^*,K)\xrightarrow[\|x\|\to\infty]{\mathrm{(d)}} 6U^2.
\]
On the other hand, the left-hand side
\[
(U^*_x)^2\sum_{j=1}^2(1+\eta_j)^2\mathbf{m}_2(S_j^*,K)+2(U^*_x)^2(1+\eta_1)(1+\eta_2)\mathbf{m}_1(S^*_1,K)\mathbf{m}_1(S_2^*,K)
\]
is bounded by $2(1+\overline{\eta})^2 (\overline{\mathbf{m}}_2+\overline{\mathbf{m}}_1^2)(U^*_x)^2$. Since $(U^+_x)^2$ is uniformly integrable according to Proposition~\ref{MRCA}, the uniform integrability of $(U^*_x)^2$ follows by the same argument used in the proof of \eqref{2subtreeU}. 
Consequently, 
\[
\E\left[\bigg(\frac{Z^*(x,K)}{2\sigma^2\c_4|K|\log\J_+(x)}\bigg)^2\right]\xrightarrow[\|x\|\to\infty]{} \E[6U^2]=2.
\]
This suffices to conclude the proof.
\end{proof}

\section{Occupation times when $1\leq d\leq 3$: proof of Theorem \ref{lowd}}\label{ld}

Let us consider the low dimensions $d\leq 3$ in this section. Here we always assume \eqref{offspring}, \eqref{eq:assumption-moment4} and $d\leq 3$. 
Recall that we can easily adapt the arguments used in \cite[Theorem 7]{LeGall-Lin} to show that for any nonempty compact set $K\subset \Z^d$, as $\|x\|\to \infty$, 
\begin{equation}\label{uK3d}
\P_x(Z_\T(K)\ge 1)=(1+o_x(1)) \frac{2(4-d)}{d\sigma^2\J(x)^2}.
\end{equation}
We will need the following result on the tail of the total progeny of the critical Bienaym\'e--Galton--Watson tree $\T$, which is borrowed from (3) in \cite{LeGall-Lin}: as $n\to \infty$,
\begin{equation}\label{tailT}
  \P(\#\T\ge n)=\frac{2+o_n(1)}{\sqrt{2\pi\sigma^2}n^{1/2}}.
\end{equation}

Throughout this section, we consider only integers $n\ge1$ such that $\P(\#\T=n)>0$ and when we let $n\to\infty$, we mean along such values of $n$.

Let $f \colon \R_+\to\R_+$ be a continuous and compactly supported function with $f(0)=0$. In particular, its uniform norm $\|f\|_\infty$ is a finite constant. For sufficiently small $\delta\in(0,1)$, we observe that
\begin{align}\label{givenT}
&\E_x\left[f\bigg(\frac{Z_\T(K)}{\J(x)^{4-d}}\bigg)\ind{Z_\T(K)\ge1}\right]=\sum_{n=1}^\infty \E_0\left[f\bigg(\frac{Z_\T(K-x)}{\J(x)^{4-d}}\bigg)\ind{Z_\T(K-x)\ge 1}\ind{\#\T=n}\right]\nonumber\\
=& \,\E_\eqref{badn}+\sum_{n=\lfloor\delta\J(x)^4\rfloor}^{\lfloor\J(x)^4/\delta\rfloor}\E_0\left[f\bigg(\frac{Z_\T(K-x)}{\J(x)^{4-d}}\bigg)\ind{Z_\T(K-x)\ge 1}\ind{\#\T=n}\right],
\end{align}
where $Z_\T(K-x):=\sum_{y\in K}Z_\T(y-x)$ and
\begin{equation}\label{badn}
\E_\eqref{badn}:=\E_0\left[f\bigg(\frac{Z_\T(K-x)}{\J(x)^{4-d}}\bigg) \ind{Z_\T(K-x)\ge 1}\ind{\#\T\notin [ \lfloor\delta \J(x)^4 \rfloor, \lfloor\J(x)^4/\delta\rfloor]}\right].
\end{equation}
We first prove that as $\delta\to 0$,
\begin{equation}\label{bdbadn}
\E_\eqref{badn}=\frac{o_\delta(1)+o_x(1)}{\J(x)^2}=(o_\delta(1)+o_x(1))\P_x(Z_\T(K)\ge 1).
\end{equation}
In fact, by \eqref{tailT},
\begin{align*}
\E_\eqref{badn}\le & \,\|f\|_\infty \P_0\left(Z_\T(K-x)\ge1, \#\T\le \delta \J(x)^4\right)+\|f\|_\infty \P(\#\T\ge \J(x)^4/\delta)\\
\le &\,\|f\|_\infty \P_x\left(Z_\T(K)\ge1, \#\T\le \delta \J(x)^4\right)+\frac{o_\delta(1)}{\J(x)^2}.
\end{align*}
We define $\B_r(x):=\{z\in \R^d\colon \J(z-x)\leq r\}$.
In the case of small progeny, let us consider the stopping line
\[
\L_{\B_r(x)^c}:=\{u\in\T: \J(S_u-x)>r, \max_{v<u}\J(S_v-x)\le r\},
\]
with $r=\J(x)/2$. Observe that under $\P_x$, when $Z_\T(K)\ge1$, $\L_{\B_r(x)^c}$ is not empty. We write as usual $L_{\B_r(x)^c}=\#\L_{\B_r(x)^c}$. For $\varepsilon=\delta^{1/4}$ and $\J(x)\gg1$, it follows from the branching property at $\L_{\B_r(x)^c}$ that
\begin{align}\label{smallT}
&\P_x\left(Z_\T(K)\ge1, \#\T\le \delta \J(x)^4\right)\nonumber\\
\le &\, \P_x\left(L_{\B_r(x)^c}\ge \varepsilon\J(x)^2, \max_{u\in\L_{\B_r(x)^c}}\#\T_u\le \delta\J(x)^4\right)\nonumber\\
&+\P_x\Bigg( L_{\B_r(x)^c}\le \varepsilon\J(x)^2; \sum_{u\in\L_{\B_r(x)^c}}Z_{\T_u}(K)\ge 1\Bigg)\nonumber\\
\le & \, \P_x\left(L_{\B_r(x)^c}\ge \varepsilon\J(x)^2\right)\P(\#\T\le \delta\J(x)^4)^{\varepsilon\J(x)^2}\nonumber\\
&+\E_x\Bigg[\sum_{u\in\L_{\B_r(x)^c}}\P_{S_u}(Z_\T(K)\ge 1)\ind{1\le L_{\B_r(x)^c}\le \varepsilon\J(x)^2}\Bigg].
\end{align}
By abuse of notation, let hereafter $c>0$ stand for some constant independent of $\delta$ and $x$, whose value may change from line to line.
For the first term, by Markov's inequality and \eqref{tailT}, as $\varepsilon=\delta^{1/4}$, one has
\begin{align*}
&\,\P_x\left(L_{\B_r(x)^c}\ge \varepsilon\J(x)^2\right)\P\left(\#\T\le \delta\J(x)^4 \right)^{\varepsilon\J(x)^2} \\ 
\le &\, \E_x\left[\frac{L_{\B_r(x)^c}}{\varepsilon \J(x)^2}\right] \left( 1 - \P\left(\#\T> \delta\J(x)^4 \right)\right)^{\varepsilon \J(x)^2}\\
\le & \,\frac{1}{\delta^{1/4}\J(x)^2}\E_x[L_{\B_r(x)^c}] \cdot e^{-\frac{c}{\delta^{1/4}}}=\frac{o_\delta(1)}{\J(x)^2},
\end{align*}
because $\E_x[L_{\B_r(x)^c}]=\P_x(\exists n\ge 0 \colon S_n\in \B_r(x)^c)\le 1$. In the second term
\[
  \E_x\Bigg[\sum_{u\in\L_{\B_r(x)^c}}\P_{S_u}(Z_\T(K)\ge 1)\ind{1\le L_{\B_r(x)^c}\le \varepsilon\J(x)^2}\Bigg],
\]
we will separate the two cases $\J(S_u)\ge r/2$ or $\J(S_u)<r/2$ for $u\in \L_{\B_r(x)^c}$. 
On the one hand, by \eqref{uK3d}, one sees that
\begin{align}
&\E_x\left[\sum_{u\in\L_{\B_r(x)^c}}\P_{S_u}(Z_\T(K)\ge 1)\ind{ \J(S_u)\ge r/2}\ind{1\le L_{\B_r(x)^c}\le \varepsilon\J(x)^2}\right]\nonumber\\
\le & \sup_{y: \J(y)\ge r/2} \P_y(Z_\T(K)\ge 1)\E_x\left[L_{\B_r(x)^c}\ind{1\le L_{\B_r(x)^c}\le \varepsilon\J(x)^2}\right]\nonumber\\
\le & \frac{c}{\J(x)^2} \varepsilon \J(x)^2\P_x(L_{\B_r(x)^c}\ge 1)=o_\delta(1)\P_x(L_{\B_r(x)^c}\ge 1).\label{smallL}
\end{align}
Since $r=\J(x)/2$, observe that 
\[
\P_x(L_{\B_r(x)^c}\ge 1)\le \P_0(Z_\T(\B_{r/2}^c)\ge 1).
\]
Let us consider $k=\lfloor r^2\rfloor$ i.i.d.~CBRWs started from the origin. We denote by $\mathcal{V}^{[k]}$ the sum of the $k$ corresponding total occupation measures. Then,
\[
\P\left(\mathcal{V}^{[k]}(\B^c_{r/2})\ge 1\right)=1-\left(1-\P_0\left(Z_\T(\B^c_{r/2})\ge 1\right)\right)^{\lfloor r^2\rfloor}.
\]
By Proposition 6 in \cite{LeGall-Lin}, one deduces that
\[
\limsup_{r\to\infty}\P\left(\mathcal{V}^{[k]}(\B^c_{r/2})\ge 1\right)\le \P_0\left(\sup_{0\le s\le \tau}\|\hat{W}_s\|\ge \frac{\sqrt{d\sigma}}{2\sqrt{2}}\right)\in (0,1),
\]
where $(W_s)_{s\ge0}$ is a standard Brownian snake in $\R^d$ with lifetime process $(\zeta_s)_{s\ge0}$, $\hat{W}_s=W_s(\zeta_s)$ and $\tau$ denotes the first hitting time of $2/\sigma$ by the local time at $0$ of the lifetime process. Consequently, for sufficiently large $r$,
\[
\P(Z_\T(\B^c_{r/2})\ge 1)\le \frac{c}{r^2}= \frac{4c}{\J(x)^2}.
\]
Plugging it in \eqref{smallL} yields that
\[
\E_x\left[\sum_{u\in\L_{\B_r(x)^c}}\P_{S_u}(Z_\T(K)\ge 1)\ind{ \J(S_u)\ge r/2}\ind{1\le L_{\B_r(x)^c}\le \varepsilon\J(x)^2}\right]=\frac{o_\delta(1)}{\J(x)^2}.
\]
On the other hand, we have the upper bound
\begin{align*}
&\,\E_x\left[\sum_{u\in\L_{\B_r(x)^c}}\P_{S_u}(Z_\T(K)\ge 1)\ind{\J(S_u)< r/2}\ind{1\le L_{\B_r(x)^c}\le \varepsilon\J(x)^2}\right]\\
\le & \, \E_x\left[\sum_{u\in\L_{\B_r(x)^c}}\ind{\J(S_u)< r/2}\Bigg]= \sum_{n\geq 0}\E_x\Bigg[\sum_{|u|=n} \ind{u\in \L_{\B_r(x)^c}, \J(S_u)< r/2}\right]\\
= & \,\sum_{n\geq 0}\P_x(T_{\B_r(x)^c}=n, \J(S_n)<r/2) \qquad \mbox{ by the many-to-one lemma} \\ 
= & \, \P_x(T_{\B_r(x)^c}<\infty, \J(S_{T_{\B_r(x)^c}})<r/2). 
\end{align*}
By translation, we have
\begin{align*}
&\P_x(T_{\B_r(x)^c}<\infty, \J(S_{T_{\B_r(x)^c}})<r/2) =\P_0(T_{\B_r^c}<\infty, \J(S_{T_{\B_r^c}}+x)< r/2 )\\
\le &\,\P_0(\tau_r<\infty, \J(S_{\tau_r}) > 3r/2) \le \E_0[\tau_r]\times\mu(\{z\in\Z^d: \J(z) > r/2\}),
\end{align*}
as $\J(x) = 2r$. Using \eqref{eq:assumption-moment4} and the fact that $\E_0[\tau_r]=O(r^2)$, we see that 
\[
  \P_x(T_{\B_r(x)^c}<\infty, \J(S_{T_{\B_r(x)^c}})<r/2)=o(r^{-2})=\frac{o_x(1)}{\J(x)^2}.
\]
Going back to \eqref{smallT} and putting the previous estimates together, one gets that
\[
\P_x\left(Z_\T(K)\ge1, \#\T\le \delta \J(x)^4\right)=\frac{o_\delta(1)+o_x(1)}{\J(x)^2}.
\]
Therefore, we obtain \eqref{bdbadn}. Now, \eqref{givenT} becomes that
\begin{align}
&\E_x\left[f\left(\frac{Z_\T(K)}{\J(x)^{4-d}}\right)\ind{Z_\T(K)\ge1}\right]=(o_\delta(1)+o_x(1))\P_x(Z_\T(K)\ge 1)\nonumber\\
&\qquad+\sum_{n=\lfloor\delta\J(x)^4\rfloor}^{\lfloor\J(x)^4/\delta\rfloor}\E_0\left[f\left(\sum_{y\in K}\frac{Z_\T(y-x)}{\J(x)^{4-d}}\right)\bigg\vert\#\T=n\right]\P(\#\T=n).\label{givenTmain}
\end{align}

Given the covariance matrix $\Gamma$ of $\mu$, let $\Gamma^{1/2}$ be the unique positive definite symmetric matrix such that $\Gamma=(\Gamma^{1/2})^2$. We will use the notation $\Gamma^{-1/2}:=(\Gamma^{1/2})^{-1}$. 
In the following, we are going to consider a sequence of $x$ such that $\frac{\Gamma^{-1/2}x}{\|\Gamma^{-1/2}x\|}$ converges to some $\vartheta\in\mathbb{S}^{d-1}$ as $\J(x)\to\infty$ for $x\in\Z^d$ and prove the convergence in law of $\frac{Z_\T(K)}{\J(x)^{4-d}}$ under $\P_x(\cdot\vert Z_\T(K) \ge 1)$, along such sequence.  

Following the notation in \cite{LeGall-Lin}, let $\{L_n(z)\}_{z\in \Z^d}$ be distributed as $\{Z_\T(z)\}_{z\in \Z^d}$ under $\P_0$ conditioned on $\#\T=n$. Let $\mathbf{N}_z, z\in\R^d$ denote the $\sigma-$finite excursion measures of the Brownian snake $(W_s)_{s\ge0}$ started from $z$. We define the following probability measure for every $z\in\R^d$ and every $r>0$,
\[
\mathbf{N}_z^{(r)}:=\mathbf{N}_z(\,\cdot\,\vert \sup\{s\ge 0\colon \zeta_s>0\}=r).
\]
Under $\mathbf{N}_z^{(r)}$, the lifetime process $(\zeta_s)_{0\leq s\leq r}$ is a Brownian excursion with duration~$r$.
Let $W^{(1)}=(W^{(1)}_s)_{0\le s\le 1}$ be a process distributed according to $\mathbf{N}_0^{(1)}$. Let $(\ell^z, z\in\R^d)$ stand for the collection of local times of $(W^{(1)}_s)_{0\le s\le 1}$, which can be defined as the continuous density of the occupation measure of $W^{(1)}$ (see Proposition~1 in \cite{LeGall-Lin}). Theorem 4 of \cite{LeGall-Lin} implies that for a sequence $(x_n)$ in $\Z^d$ such that $\sqrt{\frac{\sigma}{2}} n^{-1/4}\Gamma^{-1/2}x_n \to z\in\R^d\setminus\{0\}$ as $n\to\infty$, we have the joint convergence in distribution as follows:
\begin{equation}
\left(\frac{L_n(y-x_n)}{n^{1-d/4}}\right)_{y\in K} \xrightarrow[n\to\infty]{\mathrm{(d)}} \underbrace{(1,\cdots,1)}_{\in\R^{|K|}}\times \frac{1}{\sqrt{\det\Gamma}}(\sigma/2)^{d/2} \ell^{-z}.
\label{joint-cv-local-time}
\end{equation}
We can replace $\ell^{-z}$ by $\ell^z$ in the previous display, because they have the same law by symmetry of the Brownian snake.
Moreover, from the proof of Lemma 3 in \cite{LeGall-Lin}, we can deduce that for any fixed $\delta>0$,
\begin{equation}
\lim_{\eta\downarrow0}\;\limsup_{n\to \infty}\sup_{\J(a_n)\wedge \J(b_n) \ge \delta n^{1/4}, \J(a_n-b_n)\le \eta n^{1/4}}\E\left[\frac{(L_n(a_n)-L_n(b_n))^2}{n^{2-d/2}}\right]=0.
\label{L2-continuity-local-time}
\end{equation}
Take a sufficiently small $\eta>0$, we are going to divide the interval $[\delta\J(x)^4, \J(x)^4/\delta]$ into small disjoint intervals of length $\eta\J(x)^4$. To this end, we set $A:=\lfloor\frac{1/\delta-\delta}{\eta}\rfloor$ and for every $1\leq i\leq A$, we define the interval
\[
I_i:=\delta\J(x)^4+ [\eta\J(x)^4 (i-1), \eta\J(x)^4i[,
\]
and finally, $I_{A+1}:=[\delta\J(x)^4+ A\eta\J(x)^4, \J(x)^4/\delta]$. In other words, 
\[
   I_i = [a_{i-1} \J(x)^4, a_i \J(x)^4[
\] 
with $a_i=\delta+i\eta$ for any $1\le i\le A$.

Suppose that $\frac{\Gamma^{-1/2}x}{\|\Gamma^{-1/2}x\|}$ converges to some $\vartheta\in\mathbb{S}^{d-1}$ as $\|x\|\to\infty$ for $x\in\Z^d$. In the remaining part the proof, when we take $x \to \infty$ or $\|x\|\to\infty$, we always refer to this limiting procedure unless otherwise specified. 

Recall that $\J(x)=\|\Gamma^{-1/2}x\|/\sqrt{d}$. For any integer $n\in I_i$ with $\J(x)\gg1$, let $x_n$ denote the unique point in $\Z^d$ such that
\[
x_n\in \Gamma^{1/2}\vartheta (\frac{n}{a_i})^{1/4}\sqrt{d}+[0,1)^d.
\]

Recall that $f$ is a continuous function with compact support. 
First, using the uniform continuity of $f$, together with \eqref{L2-continuity-local-time}, we can replace $Z_\T(y-x)$ under the conditioning $\T=n$ by $L_n(y-x_n)$ for each $y\in K$. It means that
\begin{multline*}
\varepsilon_1(x,\eta):=\sup_{1\le i\le A+1}\sup_{n\in I_i}\Bigg|\E_0\bigg[f\bigg(\sum_{y\in K}\frac{Z_\T(y-x)}{\J(x)^{4-d}}\bigg)\Big\vert\#\T=n\bigg]- \E \bigg[ f\bigg(\sum_{y\in K} \frac{L_n(y -x_n)}{\J(x)^{4-d}} \bigg)\bigg] \Bigg|
\end{multline*}
tends to zero as $x\to\infty$ and then $\eta\downarrow0$.

Secondly, using again the uniform continuity of $f$, together with the tightness of $\frac{L_n(y-x_n)}{n^{1-d/4}}$ from \eqref{joint-cv-local-time}, we get that
\begin{multline*}
\varepsilon_2(x,\eta):=\sup_{1\le i\le A+1}\sup_{n\in I_i}\Bigg| \E \bigg[ f\bigg(\sum_{y\in K} \frac{L_n(y -x_n)}{\J(x)^{4-d}} \bigg)\bigg] - \E \bigg[ f\bigg(\sum_{y\in K} \frac{L_n(y -x_n)}{n^{1-d/4}}a_i^{1-d/4} \bigg)\bigg] \Bigg|
\end{multline*}
tends to zero as $x\to\infty$ and then $\eta\downarrow0$.

Finally, by the weak convergence \eqref{joint-cv-local-time}, we obtain that
\begin{multline*}
\varepsilon_3(x,\eta):= \\
\sup_{1\le i\le A+1}\sup_{n\in I_i}\Bigg| \E \bigg[ f\bigg(\sum_{y\in K} \frac{L_n(y -x_n)}{n^{1-d/4}}a_i^{1-d/4} \bigg)\bigg] - \mathbf{N}_0^{(1)}\left[f\left(|K|a_i^{1-d/4}\frac{(\sigma/2)^{d/2}}{\sqrt{\det\Gamma}}\ell^{\sqrt{\sigma d/2}\vartheta a_i^{-1/4}}\right)\right]\Bigg|
\end{multline*}
tends to zero as $x\to\infty$ and then $\eta\downarrow0$.

Going back to \eqref{givenTmain}, we deduce therefore that
\begin{multline}
\E_x\left[f\bigg(\frac{Z_\T(K)}{\J(x)^{4-d}}\bigg)\ind{Z_\T(K)\ge1}\right]=(o_\delta(1)+o_x(1))\P_x(Z_\T(K)\ge 1)\\
+\sum_{i=1}^{A+1}\sum_{n\in I_i}\mathbf{N}_0^{(1)}\left[f\left(|K|a_i^{1-d/4}\frac{(\sigma/2)^{d/2}}{\sqrt{\det\Gamma}}\ell^{\sqrt{\sigma d/2}\vartheta a_i^{-1/4}}\right)\right]\P(\#\T\in I_i)\\
+\varepsilon(x,\eta)\P(\delta\J(x)^4\le \#\T\le \J(x)^4/\delta), \nonumber
\end{multline}
where $|\varepsilon(x,\eta)|\le \varepsilon_1(x,\eta)+\varepsilon_2(x,\eta)+\varepsilon_3(x,\eta)$ tends to zero as $x\to\infty$ and then $\eta\downarrow0$.

Applying \eqref{uK3d} and \eqref{tailT} here, we obtain that
\begin{align*}
&\E_x\left[f\bigg(\frac{Z_\T(K)}{\J(x)^{4-d}}\bigg)\Big\vert Z_\T(K)\ge1\right]=o_\delta(1)+o_x(1)+\varepsilon(x,\eta)(1+o_x(1))(\frac{1}{\sqrt{\delta}}-\sqrt{\delta}) \\
&+\frac{1}{\P_x(Z_\T(K)\ge 1)}\sum_{i=1}^{A+1}\mathbf{N}_0^{(1)}\left[f\left(|K|a_i^{1-d/4}\frac{(\sigma/2)^{d/2}}{\sqrt{\det\Gamma}}\ell^{\sqrt{\sigma d/2}\vartheta a_i^{-1/4}}\right)\right]\P(\#\T\in I_i).
\end{align*}
Since $\P(\#\T\in I_i) = \P(\#\T\ge a_{i-1}\J(x)^4)- \P(\#\T \ge a_i \J(x)^4)$, we apply \eqref{uK3d} and \eqref{tailT} again to see that 
\begin{align*}
&\frac{1}{\P_x(Z_\T(K)\ge 1)}\sum_{i=1}^{A+1}\mathbf{N}_0^{(1)}\left[f\left(|K|a_i^{1-d/4}\frac{(\sigma/2)^{d/2}}{\sqrt{\det\Gamma}}\ell^{\sqrt{\sigma d/2}\vartheta a_i^{-1/4}}\right)\right]\P(\#\T\in I_i)\\
=&(1+o_x(1))\frac{\sigma d/2}{\sqrt{2\pi}(4-d)}\sum_{i=1}^{A+1}\mathbf{N}_0^{(1)}\left[f\left(|K|a_i^{1-d/4}\frac{(\sigma/2)^{d/2}}{\sqrt{\det\Gamma}}\ell^{\sqrt{\sigma d/2}\vartheta a_i^{-1/4}}\right)\right](\frac{1}{\sqrt{a_{i-1}}}-\frac{1}{\sqrt{a_i}}).
\end{align*}
According to Proposition 1 of \cite{LeGall-Lin}, the density $y\mapsto\ell^y$ is $\mathbf{N}_0^{(1)}$-a.s.~continuous. Then as the function $f$ is bounded, by dominated convergence,
\[
a\mapsto \mathbf{N}_0^{(1)}\left[f\left(|K|a^{1-d/4}\frac{(\sigma/2)^{d/2}}{\sqrt{\det\Gamma}}\ell^{\sqrt{\sigma d/2}\vartheta a^{-1/4}}\right)\right]
\]
is continuous for $a>0$. Letting $\|x\|\to\infty$ and then $\eta\downarrow 0$, and finally $\delta\downarrow0$ yields that
\begin{align*}
&\lim_{\|x\|\to \infty}\E_x\left[f\bigg(\frac{Z_\T(K)}{\J(x)^{4-d}}\bigg)\Big\vert Z_\T(K)\ge1\right]\\
&= \frac{\sigma d/2}{\sqrt{2\pi}(4-d)}\int_0^{\infty} \mathbf{N}_0^{(1)}\left[f\left(|K|a^{1-d/4}\frac{(\sigma/2)^{d/2}}{\sqrt{\det\Gamma}}\ell^{\sqrt{\sigma d/2}\vartheta a^{-1/4}}\right)\right] \frac{\d a}{a^{3/2}},
\end{align*}
where the limiting integration is finite. As $f(0)=0$ and $f$ is bounded, we claim that
\begin{align*}
&\frac{\sigma d/2}{\sqrt{2\pi}(4-d)}\int_0^{\infty} \mathbf{N}_0^{(1)}\left[f\left(|K|a^{1-d/4}\frac{(\sigma/2)^{d/2}}{\sqrt{\det\Gamma}}\ell^{\sqrt{\sigma d/2}\vartheta a^{-1/4}}\right)\right] \frac{\d a}{a^{3/2}} \\
=&\,\mathbf{N}_0\left[f\left(\frac{(\sigma/2)^{d/2}}{\sqrt{\det\Gamma}}(\frac{2}{\sigma d})^{\frac{d-4}{2}}\ell^{\vartheta }\right)\Big\vert \ell^{\vartheta}>0\right]<\infty.
\end{align*}
In fact, by scaling of the Brownian snake, $(r^{d/4-1}\ell^{zr^{1/4}})_{z\in\R^d}$ under $\mathbf{N}_0^{(r)}$ has the same distribution as $(\ell^z)_{z\in\R^d}$ under $\mathbf{N}_0^{(1)}$. So, by letting $r=a(\frac{2}{\sigma d})^2$, we have
\[
\mathbf{N}_0^{(1)}\left[f\left(|K|a^{1-d/4}\frac{(\sigma/2)^{d/2}}{\sqrt{\det\Gamma}}\ell^{\sqrt{\sigma d/2}\vartheta a^{-1/4}}\right)\right]=\mathbf{N}^{(a(\frac{2}{\sigma d})^2)}_0\left[f\left(|K| \frac{(\sigma/2)^{d/2}}{\sqrt{\det\Gamma}}(\frac{2}{\sigma d})^{\frac{d-4}{2}}\ell^{\vartheta }\right)\right].
\]
As a consequence,
\begin{align*}
&\lim_{\|x\|\to\infty}\E_x\left[f\bigg(\frac{Z_\T(K)}{|K|\J(x)^{4-d}}\bigg)\Big\vert Z_\T(K)\ge1\right]\\
=&\frac{\sigma d/2}{\sqrt{2\pi}(4-d)}\int_0^{\infty}\mathbf{N}^{(a(\frac{2}{\sigma d})^2)}_0\left[f\left(\frac{(\sigma/2)^{d/2}}{\sqrt{\det\Gamma}}(\frac{2}{\sigma d})^{\frac{d-4}{2}}\ell^{\vartheta }\right)\right]\frac{\d a}{a^{3/2}}\\
=&\frac{2}{4-d}\int_0^\infty \mathbf{N}_0^{(r)} \left[f\left(\frac{(\sigma/2)^{d/2}}{\sqrt{\det\Gamma}}(\frac{2}{\sigma d})^{\frac{d-4}{2}}\ell^{\vartheta }\right)\ind{\ell^{\vartheta}>0}\right]\frac{\d r}{2\sqrt{2\pi r^3}}.
\end{align*}
For the last equality, we have used the fact that $f(0)=0$.
Recall the decomposition 
\[
\mathbf{N}_0=\int_0^\infty \frac{\d r}{2\sqrt{2\pi r^3}}\mathbf{N}_0^{(r)},
\]
and that
\[
\mathbf{N}_x(\ell^y>0)=\frac{4-d}{2} \|x-y\|^{-2}, \forall x\neq y
\]
(see for instance the formula (5) and the proof of Proposition 2 in \cite{LeGall-Lin}).
Therefore, for any $\vartheta\in\mathbb{S}^{d-1}$,
\[
\lim_{\|x\|\to\infty}\E_x\left[f\bigg(\frac{Z_\T(K)}{|K|\J(x)^{4-d}}\bigg)\Big\vert Z_\T(K)\ge1\right]=\mathbf{N}_0\left[f\left(\frac{(\sigma/2)^{d/2}}{\sqrt{\det\Gamma}}(\frac{2}{\sigma d})^{\frac{d-4}{2}}\ell^{\vartheta }\right)\Big\vert \ell^{\vartheta}>0\right].
\]

We conclude in the end that if $\frac{\Gamma^{-1/2}x}{\|\Gamma^{-1/2}x\|}$ converges as $\|x\|\to\infty$, then under $\P_x(\cdot\vert Z_\T(K) \geq 1)$, the following convergence in law holds:
\[
\frac{Z_\T(K)}{|K|\J(x)^{4-d}} \xrightarrow[\|x\|\to\infty]{\mathrm{(d)}}  \frac{(\sigma/2)^{d/2}}{\sqrt{\det\Gamma}}\bigg(\frac{2}{\sigma d}\bigg)^{\frac{d-4}{2}}\ell_+,
\]
where $\ell_+$ is defined in \eqref{localtime+}. Note that $\ell_+$ does not depend on $\vartheta$. For a general sequence of $x$ such that $\|x\|\to\infty$, by the compactness of $\S^{d-1}$, we can find a sub-subsequence $(x_{n_k})$ in any subsequence $(x_n)$, such that $\frac{\Gamma^{-1/2}x_{n_k}}{\|\Gamma^{-1/2}x_{n_k}\|}$ converges and along this sub-subsequence, the above weak convergence holds. Theorem \ref{lowd} is thus proved. 
\medskip

\appendix

\section{Appendix: Proof of Lemmas in Section \ref{SRW}}
\label{appendixA}

\begin{proof}[Proof of Lemma \ref{fargreen}]
Observe that 
\begin{align*}
g(x,y)=&\sum_{n=0}^\infty \P_x(S_n=y)=\sum_{n=0}^\infty \P_x(S_n=y, T_K>n)+\sum_{n=0}^\infty \P_x(S_n=y, T_K\le n)\\
=&\sum_{n=0}^\infty \P_x(S_n=y, T_K>n)+\sum_{n=0}^\infty \sum_{k=0}^n\sum_{z\in K} \P_x(T_K=k, S_{T_K}=z, S_n=y).
\end{align*}
We set $T_z:=\inf\{n\ge0 \colon S_n=z\}$ for any $z\in K$. The strong Markov property implies that $g(z,y)=\P_z(T_y<\infty)g(y,y)=\P_z(T_y<\infty)g(0,0)$. 

Then, by the strong Markov property at the stopping time $T_K$, one has
\begin{align*}
g(x,y)-\sum_{n=0}^\infty \P_x(S_n=y, T_K>n)=&\sum_{z\in K}\sum_{k=0}^\infty \P_x(T_K=k, S_{T_K}=z)g(z,y)\\
=&\sum_{z\in K}\P_x(T_K<\infty, S_{T_K}=z)\P_z(T_y<\infty)g(0,0)\\
\leq &\; g(0,0)\cdot \P_x(T_K<\infty)\cdot\sum_{z\in K}\P_z(T_y<\infty).
\end{align*}
Recall that $\bar \mu$ denotes the probability distribution on $\Z^d$ such that $\bar \mu(\{x\})=\mu(\{-x\})$ for all $x\in \Z^d$. 
By considering the random walk defined under $\bar \P$ with jump law $\bar \mu$, we have $\P_z(T_y<\infty)=\bar \P_y(T_z<\infty)$.
Lemma \ref{fargreen} then follows from \eqref{hittingprobabS-2}. 
\end{proof}

\begin{proof}[Proof of Lemma \ref{bigjump}]
For $R>r>\J(x)$, we have
\begin{align*}
  &\P_x(\J(S_{\tau_r})>R)\\
= &\, \sum_{n=0}^\infty \P_x(\J(S_{\tau_r})>R, \tau_r=n+1)\\
\leq & \,\sum_{n=0}^\infty \P_x(\tau_r=n+1,\J(S_{n})\leq r, \J(S_{n+1}-S_n)>R-r)\\
\leq & \,\sum_{n=0}^\infty \P_x(\tau_r>n,\J(S_{n})\leq r, \J(S_{n+1}-S_n)>R-r).
\end{align*}
By the Markov property at time $n$, the probability appearing in the last sum is equal to 
\[
  \P_x(\tau_r>n,\J(S_{n})\leq r)\cdot \P(\J(X_1)>R-r).
\]
It follows that 
\begin{eqnarray*}
  \P_x(\J(S_{\tau_r})>R)& \leq & \Big(\sum_{n=0}^\infty \P_x(\tau_r>n,\J(S_{n})\leq r)\Big) \P(\J(X_1)>R-r)\\
&= & \Big(\sum_{n=0}^\infty \P_x(\tau_r>n)\Big)  \P(\J(X_1)>R-r)\\
&=& \E_x[\tau_r] \cdot\P(\J(X_1)>R-r).
\end{eqnarray*}

If the jump law $\mu$ has a finite $(d-1)$-th moment, 
\[
  \lim_{r\to\infty}r^{d-1}\P(\J(X_1)>r)=0.
\]
At the same time, it is well known that $\E_x[\tau_r]=O(r^2)$. We have therefore $\lim_{r\to\infty}r^{d-3}\P_x(\J(S_{\tau_r})>2r)=0$.
\end{proof}

\begin{proof}[Proof of Lemma \ref{bdhgg}]
Within this proof, the dimension $d$ is always greater or equal to 4.
By \eqref{green-not-finite-support} and the assumption on $h$, one sees that for any $w\in\Z^d$,
\begin{equation}\label{upbdhgg}
h(w)g(a,w)g(w,b)\le \frac{C(\log(2+\|w\|))^k}{(1+\|w\|^{d-2})(1+\|a-w\|^{d-2})(1+\|w-b\|^{d-2})}.
\end{equation}
Recall that $M:=\max\{\|a\|,\|b\|,\|a-b\|\}$.
Without loss of generality, we assume that $m:=\min\{\|a\|,\|b\|,\|a-b\|\}=\|a-b\|$. The other cases can be treated similarly.
Let 
\begin{align*}
K_a=&\{w\in\Z^d \colon \|w-a\|\le \frac34 m\},\\
K_b=&\{w\in\Z^d \colon \|w-b\|\le \frac34 m\},\\
K_c=&\{w\in\Z^d \colon \|w\|\le \frac14 M\},\\
K_t=&\{w\in\Z^d \colon \|w-\frac{a+b}{2}\|\le 2M\}.
\end{align*}
We will bound the sums on $K_a,K_b,K_c, K_t\setminus(K_a\cup K_b\cup K_c)$ and $K_t^c$ separately. 

First, observe that when $w\in K_a$, 
\[
\|w\|\le \|a\|+\|w-a\|\le 2M,
\]
and
\[
(1+\|w\|^{d-2})(1+\|w-b\|^{d-2})\ge C(1+m^{d-2})(1+M^{d-2}).
\]
In fact, there are three cases as follows.
\begin{itemize}
\item If $M=\|a\|$, then $\|w\|\ge \|a\|-\|a-w\|\ge M/4$ and $\|w-b\|\ge \|a-b\|-\|a-w\|\ge m/4$.
\item If $M=\|b\|$ and $\|w\|\ge M/2$, then $\|w-b\|\ge \|a-b\|-\|a-w\|\ge m/4$.
\item If $M=\|b\|$ and $\|w\|\le M/2$, then $\|w-b\|\ge \|b\|-\|w\|\ge M/2$ and $\|w\|\ge \|a\|-\|a-w\|\ge m/4$.
\end{itemize}
Then it follows from \eqref{upbdhgg} that
\begin{align*}
\sum_{w\in K_a}h(w)g(a,w)g(w,b)\le &\,\frac{C(\log(2+M))^k}{(1+m^{d-2})(1+M^{d-2})}\sum_{w\in K_a}\frac{1}{1+\|a-w\|^{d-2}}\\
\le& \, \frac{C(\log(2+M))^k}{(1+m^{d-2})(1+M^{d-2})}m^2\\
\le& \, \frac{C(\log(2+M))^k(1+m)^{4-d}}{1+M^{d-2}}.
\end{align*}
Secondly, when $w\in K_b$, similarly we have
\begin{align*}
\sum_{w\in K_b}h(w)g(a,w)g(w,b)\le &\,\frac{C(\log(2+M))^k}{(1+m^{d-2})(1+M^{d-2})}\sum_{w\in K_b}\frac{1}{1+\|w-b\|^{d-2}}\\
\le&  \,\frac{C(\log(2+M))^k}{(1+m^{d-2})(1+M^{d-2})}m^2\\
\le & \, \frac{C(\log(2+M))^k(1+m)^{4-d}}{1+M^{d-2}}.
\end{align*}
Thirdly, when $w\in K_c$, we have $\|w\|\le \frac{M}{4}$ and
\[
(1+\|a-w\|^{d-2})(1+\|w-b\|^{d-2})\le C(1+M^{d-2})^2.
\]
In fact, there are two following cases.
\begin{itemize}
\item If $M=\|a\|$, then $\|b\| = \frac{\|b\|}{2}+\frac{\|b\|}{2}\ge \frac{\|a\|-\|a-b\|}{2}+\frac{\|b\|}{2}\ge \frac{M-m}{2}+\frac{m}{2}=\frac{M}{2}$. Consequently, 
\[
\|a-w\|\ge \|a\|-\|w\|\ge \frac34 M, \quad \|w-b\|\ge \|b\|-\|w\|\ge \frac14 M.
\]
\item If $M=\|b\|$, things are similar. 
\end{itemize}
Again by \eqref{upbdhgg}, we get that
\begin{align*}
\sum_{w\in K_c}h(w)g(a,w)g(w,b)\le &\,\frac{C(\log(2+M))^k}{(1+M^{d-2})^{2}}\sum_{w\in K_c}\frac{1}{1+\|w\|^{d-2}}\\
\le& \, \frac{C(\log(2+M))^k}{(1+M^{d-2})^2}M^2\\
\le & \, \frac{C(\log(2+M))^k(1+M)^{4-d}}{1+M^{d-2}}.
\end{align*}
Next, when $w\in K_t\setminus(K_a\cup K_b\cup K_c)$, we have $\frac14 M< \|w\|\le 3M$ and
\[
(1+\|a-w\|^{d-2})(1+\|w-b\|^{d-2})\ge C \Big(1+\|w-\frac{a+b}{2}\|^{d-2}\Big)^2,
\]
and $ \|w-\frac{a+b}{2}\|\ge \frac14 m$. In fact, observe that 
\begin{itemize}
\item if $\|w-\frac{a+b}{2}\|>m$, $\|a-w\|\ge \|w-\frac{a+b}{2}\|-\|\frac{a-b}{2}\|\ge \frac12  \|w-\frac{a+b}{2}\|$;
\item if $ \|w-\frac{a+b}{2}\|\le m$, $\|a-w\|\ge \frac34 m \ge \frac34  \|w-\frac{a+b}{2}\|$ as $w\notin K_a$.
\end{itemize}
The same argument works for $\|w-b\|$. Moreover, as $w\notin K_a$ and $w\notin K_b$,
\[
2\|w-\frac{a+b}{2}\| \ge \|w-a\|-\|\frac{a-b}{2}\|+\|w-b\|-\|\frac{a-b}{2}\|\ge \frac32 m-\|a-b\|=\frac12 m.
\]
Therefore, as $w\notin K_c$,
\begin{align*}
&\sum_{w\in K_t\setminus(K_a\cup K_b\cup K_c)}h(w)g(a,w)g(w,b)\\
\le\; &\frac{C(\log(2+M))^k}{1+M^{d-2}} \sum_{w\in K_t\setminus(K_a\cup K_b\cup K_c)}\frac{1}{(1+ \|w-\frac{a+b}{2}\|^{d-2})^2}\\
\le\; & \frac{C(\log(2+M))^k}{1+M^{d-2}} \sum_{\frac14 m\le \|w-\frac{a+b}{2}\|\le 2M}\frac{1}{(1+ \|w-\frac{a+b}{2}\|^{d-2})^2}\\
\le\; &\begin{cases}
 \frac{C(\log(2+M))^k}{1+M^{2}}[1+\log\frac{1+M}{1+m}] \textrm{ when } d=4,\\
 \frac{C(\log(2+M))^k(1+m)^{4-d}}{1+M^{d-2}} \textrm{ when } d\ge 5.
\end{cases}
\end{align*}
Finally, when $w\in K_t^c$, $\|w-\frac{a+b}{2}\|\ge 2M$ and
\[
h(w)g(a,w)g(w,b)\le C \frac{(\log (\|w-\frac{a+b}{2}\|+2))^k}{(1+\|w-\frac{a+b}{2}\|^{d-2})^3}.
\] 
In fact, 
\begin{align*}
\|w-a\|\ge &\, \|w-\frac{a+b}{2}\|-\|\frac{a-b}{2}\|=\|w-\frac{a+b}{2}\|-\frac{m}{2}\ge\frac32 \|w-\frac{a+b}{2}\|,\\
\|w-b\|\ge & \, \|w-\frac{a+b}{2}\|-\|\frac{a-b}{2}\|=\|w-\frac{a+b}{2}\|-\frac{m}{2}\ge\frac32 \|w-\frac{a+b}{2}\|,\\
\|w\| \ge & \,\|w-\frac{a+b}{2}\|-\|\frac{a+b}{2}\|\ge \frac12  \|w-\frac{a+b}{2}\|.
\end{align*}
As a consequence,
\begin{align*}
\sum_{w\in K_t^c}h(w)g(a,w)g(w,b)\le &\sum_{\|w-\frac{a+b}{2}\|\ge 2M}C \frac{(\log (\|w-\frac{a+b}{2}\|+2))^k}{(1+\|w-\frac{a+b}{2}\|^{d-2})^3}\\
\le &\,\frac{C}{1+M^{d-2}}\sum_{\|w-\frac{a+b}{2}\|\ge 2M} \frac{(\log (\|w-\frac{a+b}{2}\|+2))^k}{(1+\|w-\frac{a+b}{2}\|^{d-2})^2}\\
\le & \, C \frac{(\log(2+M))^k(1+M)^{4-d}}{1+M^{d-2}}.
\end{align*}
We conclude by combining the upper bounds of the sums on $K_a,K_b,K_c, K_t\setminus(K_a\cup K_b\cup K_c)$ and $K_t^c$.
\end{proof}

From now on, we take $d=4$ to establish Lemma \ref{sumh}. We first prove the following result which generalizes Lemmas 17, 18 of \cite{LeGall-Lin16} and Lemmas 3.3, 3.4 of \cite{Zhu}. Recall that $\tau_r=\inf\{n\geq 0 \colon S_n\notin \B_r\}$.

\begin{lemma}\label{sumhmomp}
Suppose that $h\colon \Z^4\to \R_+$ is a non-negative measurable function such that 
\[
h(x)\sim c_h\frac{(\log \J(x))^k }{J(x)^2},\textrm{  as } \|x\|\to\infty,
\]
for some integer $k\in\mathbb{N}$ and some constant $c_h>0$. For any $y\in K$ and $p\in\n^*$, there exists a positive constant $C_{h,p}$ such that for any $r\ge2$,
\begin{equation}\label{sumhmompbd}
\E_y\Bigg[\bigg(\sum_{j=0}^{\tau_r-1}h(S_j)\bigg)^p \Bigg]\le C_{h,p} (\log r)^{p(k+1)}.
\end{equation}
Moreover, for any $a>0$, there exists some constant $C_{h,p,a}>0$ such that for any $r\ge2$,
\begin{equation}\label{cvgPsumh}
\P_y\bigg(\bigg\vert \sum_{j=0}^{\tau_r-1}h(S_j)-\frac{4c_h}{k+1}(\log r)^{k+1}\bigg\vert \ge a (\log r)^{k+1}\bigg)\le C_{h,p,a}(\log r)^{-p}.
\end{equation}
\end{lemma}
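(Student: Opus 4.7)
The proof splits into Part (1), the moment estimate \eqref{sumhmompbd}, and Part (2), the concentration \eqref{cvgPsumh}. For Part (1), I would proceed by induction on $p$. Set $F(x) := \E_x\bigl[\sum_{j=0}^{\tau_r - 1} h(S_j)\bigr]$ for $x \in \B_r$. The Poisson equation $F(x) = h(x) + \E_x[F(S_1)\ind{S_1 \in \B_r}]$ combined with the hypothesis on $h$ (in the same spirit as Lemma~\ref{sumh}\,(1) or Lemma~\ref{bdhgg}) gives $\sup_{x \in \B_r} F(x) \le C_h (\log r)^{k+1}$. Since $h \ge 0$, one has
\[
  \Bigl(\sum_{j=0}^{\tau_r - 1} h(S_j)\Bigr)^p \le p!\sum_{0 \le j_1 \le \cdots \le j_p < \tau_r} h(S_{j_1}) \cdots h(S_{j_p}),
\]
and conditioning on $\mathcal{F}_{j_1}$ together with the strong Markov property at $j_1$ reduces the expectation to $p \cdot F(y) \cdot \sup_z \E_z\bigl[(\sum_{j<\tau_r}h(S_j))^{p-1}\bigr]$, which gives $C_{h,p}(\log r)^{p(k+1)}$ by the induction hypothesis.

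For Part (2), I would extend $F$ by zero outside $\B_r$ so that $PF - F = -h\,\ind{\B_r}$ on $\Z^4$, where $P$ denotes the transition kernel of $S$. Doob's decomposition then gives $\sum_{j=0}^{\tau_r - 1} h(S_j) = F(y) + M_{\tau_r}$, where $M$ is a mean-zero martingale with increments $\Delta M_j = F(S_{j+1}) - PF(S_j)$; Lemma \ref{sumh}\,(2) (or its straightforward adaptation to the exit problem) identifies $F(y) = \frac{4c_h}{k+1}(\log r)^{k+1} + o((\log r)^{k+1})$ uniformly for $y \in K$. It then suffices to prove that for every integer $q \ge 1$,
\begin{equation*}
\E_y[M_{\tau_r}^{2q}] \le C_{h,q} (\log r)^{q(2k+1)},
\end{equation*}
since Markov's inequality applied with exponent $2p$ then implies \eqref{cvgPsumh}. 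The plan is to obtain this estimate via Burkholder's square-function inequality, which reduces the task to bounding $\E\bigl[\bigl(\sum_{j<\tau_r} (\Delta M_j)^2\bigr)^q\bigr]$. Using $(\Delta M_j)^2 \le 2(F(S_{j+1}) - F(S_j))^2 + 2h(S_j)^2$, the contribution of $h^2(x) \lesssim (\log(2+\J(x)))^{2k}/\J(x)^4$ is handled by a direct application of Part (1) (indeed the faster decay makes the convolution against the killed Green function bounded, modulo the logarithmic factor); the contribution of $(F(S_{j+1}) - F(S_j))^2$ is controlled via its one-step conditional variance $G(x) := \E_x[(F(S_1) - F(x))^2]$, which satisfies $G(x) \le C(\log(2+\J(x)))^{2k}/\J(x)^2$ by differentiating the explicit killed Green-function representation of $F$ (whose gradient decays like $(\log\J)^k/\J$). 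Since $G$ is of the form required by the lemma with $k$ replaced by $2k$, Part (1) yields $\E_y[(\sum G(S_j))^q] \le C_q(\log r)^{q(2k+1)}$, and a further Doob decomposition passes from $\sum G(S_j)$ to $\sum (F(S_{j+1}) - F(S_j))^2$.

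The principal obstacle is Step 2, and specifically the requirement to apply Burkholder in its square-function form rather than a Rosenthal-type inequality containing a term in $\sup_j|\Delta M_j|^{2q}$: since $|\Delta M_j|$ is only bounded by $2\|F\|_\infty \sim (\log r)^{k+1}$, such an extra term would contribute $(\log r)^{2q(k+1)}$ and spoil the target $(\log r)^{q(2k+1)}$ estimate. Keeping the logarithmic powers sharp throughout the induction, and checking at each stage that the auxiliary functions $G$, $h^2$ and their iterates still decay like $(\log(2+\J(\cdot)))^{\ast}/\J(\cdot)^2$ so that Part (1) can be reapplied cleanly, is the most delicate piece of bookkeeping.
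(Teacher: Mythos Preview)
Your approach is genuinely different from the paper's. For Part~(1) the paper simply bounds $h(S_j)\le C(\log(2+r))^k/(1+\J(S_j))^2$ on $\{j<\tau_r\}$, pulls out the factor $(\log r)^k$, and invokes the $k=0$ case from \cite[Lemma~3.3]{Zhu}; your induction is a clean self-contained alternative. For Part~(2) the paper does something entirely different: it uses the Zaitsev/KMT strong coupling of $(\Gamma^{-1/2}S_j)$ with a Brownian motion $\W$, reduces to a four-dimensional Bessel process $\rho_t=\|\W_t\|$, decomposes $\int(\log_+\rho_s)^k\rho_s^{-2}\,ds$ into blocks between the hitting times of geometric levels $e^j$, observes that the blocks $X_j=\int_{\gamma_j}^{\gamma_{j+1}}\rho_s^{-2}ds$ are i.i.d.\ with mean $1$ and finite exponential moments, and concludes via an exponential Chebyshev bound. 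This exploits the explicit Bessel structure and delivers the constant $4/(k+1)$ directly. Your Doob/Burkholder route is more intrinsic (no coupling) but trades the coupling machinery for regularity estimates on the killed Poisson solution~$F$.

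The substantive gap in your plan is precisely that regularity estimate. You assert that $|F(x+e)-F(x)|\lesssim(\log\J(x))^k/\J(x)$ for $x\in\B_r$, $e\in\supp(\mu)$, uniformly in $r$, by ``differentiating the explicit killed Green-function representation''. For the free Green function $g$ one has $|g(x+e,z)-g(x,z)|\le C(1+\|x-z\|)^{-3}$ from the expansion~\eqref{green}, but passing to $g_r(x,z)=g(x,z)-\E_x[g(S_{\tau_r},z)]$ requires controlling the discrete gradient of the harmonic correction $x\mapsto\E_x[g(S_{\tau_r},z)]$ uniformly up to $\partial\B_r$; this is plausible via a coupling argument (run two walks with the same increments from $x$ and $x+e$), but is a real piece of work that you have not supplied. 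Separately, your ``further Doob decomposition'' to pass from $\sum G(S_j)$ to $\sum(F(S_{j+1})-F(S_j))^2$ would face the same issue one level down and threatens an infinite regress; in fact this step is unnecessary once you have the pointwise gradient bound, since then $(\Delta M_j)^2\le H(S_j)$ for a function $H$ of the required shape, and Part~(1) applies directly to $H$. Finally, note that Lemma~\ref{sumh}(2) concerns $\E_x[\,\cdot\mid T_K<\infty]$, not the exit expectation $F(y)$; the analogous asymptotic $F(y)\sim\frac{4c_h}{k+1}(\log r)^{k+1}$ follows instead from a direct Green-function computation $\sum_{z\in\B_r}g_r(y,z)h(z)$.
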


\begin{proof}[Proof of Lemma \ref{sumhmomp}]
Let us first prove \eqref{sumhmompbd}. Thanks to the assumption on $h$, there exists a constant $C>0$ such that 
\[
	h(x)\leq C\frac{(\log (2+\J(x)))^k }{(1+J(x))^2} \quad \textrm{ for every } x\in \Z^4.
\]
Then it is immediate that
\[
\sum_{j=0}^{\tau_r-1}h(S_j)\le C \sum_{j=0}^{\tau_r-1}\frac{(\log(2+\J(S_j)))^k}{(1+\J(S_j))^2}\le C(\log(2+r))^k \sum_{j=0}^{\tau_r-1}\frac{1}{1+\J(S_j)^2}.
\]
Moreover, we know from Lemma 3.3 of \cite{Zhu} that for any $r\ge2$,
\[
\E_y\left[\bigg( \sum_{j=0}^{\tau_r-1}\frac{1}{1+\J(S_j)^2}\bigg)^p\right]\le C_{h,p}(\log r)^p.
\]
We thus conclude \eqref{sumhmompbd}.

Now let us turn to \eqref{cvgPsumh}. First, we take $r_0=(\log r)^\beta$ with some $\beta>0$ that will be fixed later, then 
\begin{align*}
&\P_y\bigg(\bigg\vert \sum_{j=0}^{\tau_r-1}h(S_j)-\frac{4c_h}{k+1}(\log r)^{k+1}\bigg\vert \ge a (\log r)^{k+1}\bigg) \\
\leq \; & \P_y\bigg(\sum_{j=0}^{\tau_{r_0}-1}h(S_j)\geq \frac{a}{2}(\log r)^{k+1}\bigg)+ \P_y\bigg(\bigg\vert \sum_{j=\tau_{r_0}}^{\tau_r-1}h(S_j)-\frac{4c_h}{k+1}(\log r)^{k+1}\bigg\vert \ge \frac{a}{2}(\log r)^{k+1}\bigg).
\end{align*}
For the first term on the right-hand side,
\[
	 \P_y\bigg(\sum_{j=0}^{\tau_{r_0}-1}h(S_j)\geq\frac{a}{2}(\log r)^{k+1}\bigg) \leq \Big(\frac{2}{a}\Big)^p(\log r)^{-p(k+1)}\E_y\bigg[\bigg(\sum_{j=0}^{\tau_{r_0}-1}h(S_j)\bigg)^p\bigg],
\]
which, by \eqref{sumhmompbd}, is bounded by $(\frac{\log r_0}{\log r})^{p(k+1)}$ up to some multiplicative constant $C_{h,p,a}$. To bound the second term, we will derive an analogous result for Brownian motion in $\R^4$, and then use a strong invariance principle to transfer this result to the random walk, as in \cite{LeGall-Lin16}.
Let $\W=(\W_t, t\ge0)$ denote a standard Brownian motion in $\R^4$ started from the origin. We write $\rho_t:=\|\W_t\|$, for all $t\ge0$. Then $(\rho_t, t\ge0)$ is a 4-dimensional Bessel process started from $0$. The following result is the analog of Lemma 19 in \cite{LeGall-Lin16}. 

\noindent\textbf{Claim.} Let $\delta>0$ and $k\in\n$. For any real number $p\ge 1$, there exists constants $t_0\ge 2$, $\beta>0$ and $C_{\delta, k, p}>0$ such that for any $t\ge t_0$ and $r_t=( \log t)^\beta$,
\begin{equation}\label{BMsumh}
\P\left(\bigg|\int_{\tau_{r_t}^{(\rho)}}^{\tau_t^{(\rho)}}\frac{(\log_+ \rho_s)^k}{\rho_s^2}ds-\frac{1}{k+1}(\log t)^{k+1}\bigg|>\delta (\log t)^{k+1}\right)
\le C_{\delta, k,p}(\log t)^{-p},
\end{equation}
where $\tau^{(\rho)}_{r_t}:=\inf\{t\ge0 \colon \rho_t\ge r_t\}$.

We then deduce \eqref{cvgPsumh} from \eqref{BMsumh} using Zaitsev's extension \cite{Zaitsev} of the well-known Koml\'os-Major-Tusn\'ady strong invariance principle. This means that we approximate $2\J(S_j)$ by $\rho_j$ up to time $n$ with an error term of order $O(\log n)$, with sufficiently high probability. Using the same arguments as in the proof of Lemma 18 in \cite{LeGall-Lin16}, we obtain \eqref{cvgPsumh}.

It remains to verify \eqref{BMsumh}. For any integer $j\in\n$, let 
\[
\gamma_j:=\tau^{(\rho)}_{e^j},\ X_j:=\int_{\gamma_j}^{\gamma_{j+1}}\frac{ds}{\rho_s^2},\ Y_j:=\int_{\gamma_j}^{\gamma_{j+1}}\frac{(\log_+\rho_s)^k}{\rho_s^2}ds,\ I_j:=\inf_{s\ge \gamma_j}\rho_s.
\]
It follows from scaling and the Markov property of the Bessel process that $X_j, j\ge0$ are i.i.d.~random variables. Moreover, we know from the proof of Lemma 19 of \cite{LeGall-Lin16} that 
\begin{equation}\label{exp38}
\E[X_0]=1 \quad \mbox{ and }\quad \E[e^{\frac38 X_0}]=\sqrt{e}<\infty.
\end{equation}
Let $n:=\lfloor \log t\rfloor$ and $m:=\lfloor \log r_t \rfloor=\lfloor \beta \log \log t \rfloor$. We will only consider $t$ sufficiently large so that $t>r_t$. Observe that
\begin{equation}\label{sumJ}
\sum_{j=m}^n j^k =\frac{n^{k+1}}{k+1}(1+o_n(1))= \frac{(\log t)^{k+1}}{k+1}(1+o_t(1)),
\end{equation}
and
\begin{equation}\label{sumY}
\sum_{j=m+1}^n Y_j\le \int_{\tau_{r_t}^{(\rho)}}^{\tau_t^{(\rho)}}\frac{(\log_+ \rho_s)^k}{\rho_s^2}ds\le \sum_{j=m}^{n+1}Y_j,
\end{equation}
where $Y_j$ can be approximated by $j^k X_j$ with high probability. In fact, it is known (see Ex 1.18 of Chapter XI in \cite{RY99}) that $I_j e^{-j}$ is distributed as the square root of a uniform random variable on $(0,1)$. In other words, 
\begin{equation}\label{infBessel}
\P(I_j\le e^j s)= s^2, \forall s\in [0,1].
\end{equation}
If we take $\beta=\frac{p}{2\varepsilon}$, then there exists some constant $C_\varepsilon>0$ such that 
\[
  \P\bigg( \bigcup_{j=m}^n \{I_j\le e^{j-\varepsilon j} \}\bigg)\leq  \sum_{j= m}^n \P(I_j\le e^{j-\varepsilon j})= \sum_{j= m}^n e^{-2\varepsilon j} \leq C_\varepsilon (\log t)^{-p}.
\]

Now notice that
\begin{align*}
&\P\bigg(\Big|\sum_{j=m}^n Y_j -\sum_{j=m}^n j^k\Big|\ge \delta \sum_{j=m}^n j^k\bigg) \\
\le\; & \P\bigg( \bigcup_{j=m}^n \{I_j\le e^{j-\varepsilon j} \}\bigg) +\P\bigg(I_j> e^{j-\varepsilon j}, \forall m \le j\le n;  \Big|\sum_{j=m}^n Y_j - \sum_{j=m}^n j^k \Big|\geq \delta \sum_{j=m}^n j^k\bigg). 
\end{align*}
On the event $\bigcap_{m \le j\le n}\{I_j> e^{j-\varepsilon j}\}$, we have $\rho_s>e^{j-\varepsilon j}$ for all $s\in [\gamma_j,\gamma_{j+1}]$, so that $Y_j\geq j^k X_j(1-\varepsilon)^k$. At the same time, we always have $Y_j\leq (j+1)^k X_j$. It follows that 
\begin{align*}
&\P\bigg(I_j> e^{j-\varepsilon j}, \forall m \le j\le n;  \Big|\sum_{j=m}^n Y_j - \sum_{j=m}^n j^k \Big|\geq \delta \sum_{j=m}^n j^k\bigg)\\
\leq \; & \P\bigg( \sum_{j=m}^n  (j+1)^k X_j \geq (1+\delta)\sum_{j=m}^n j^k \bigg) +\P\bigg((1-\varepsilon)^k\sum_{j=m}^n j^k X_j \leq (1-\delta) \sum_{j=m}^n j^k\bigg) \\
\leq \; & \P\bigg( \sum_{j=m}^n  j^k X_j \geq \frac{(1+\delta)}{(1+1/m)^k}\sum_{j=m}^n j^k \bigg) +\P\bigg(\sum_{j=m}^n j^k X_j \leq \frac{(1-\delta)}{(1-\varepsilon)^k} \sum_{j=m}^n j^k\bigg).
\end{align*}
For $m=\lfloor \beta \log \log t \rfloor$ with sufficiently large $t$, by taking $\varepsilon=\frac{\delta}{k+1}$, we get
\[
  \frac{(1+\delta)}{(1+1/m)^k}\geq 1+\delta/2 \geq 1+\varepsilon \quad \mbox{ and } \quad \frac{(1-\delta)}{(1-\varepsilon)^k} \leq 1-\varepsilon.
\]
Therefore, for any $\lambda>0$, by independence of $X_j, j\geq 0$,
\begin{align*}
&\P\bigg(I_j> e^{j-\varepsilon j}, \forall m \le j\le n;  \Big|\sum_{j=m}^n Y_j - \sum_{j=m}^n j^k \Big|\geq \delta \sum_{j=m}^n j^k\bigg)\\
\leq \; & \P\bigg( \Big|\sum_{j=m}^n j^k X_j - \sum_{j=m}^n j^k \Big|\geq \varepsilon \sum_{j=m}^n j^k\bigg)\\
\leq \; & e^{-\lambda (1+\epsilon)\sum_{j=m}^n j^k}\prod_{j=m}^n\E[e^{\lambda j^k X_j}]+e^{\lambda(1-\varepsilon)\sum_{j=m}^n j^k}\prod_{j=m}^n\E[e^{-\lambda j^k X_j}].
\end{align*}
Thanks to \eqref{exp38}, we can use the logarithm of the moment generating function of $X_0$ to derive the existance of a constant $C>0$ such that for sufficiently large $n$,
\[
  \E[e^{\frac{X_0}{\sqrt{n}}}]\le e^{\frac{1}{\sqrt{n}}+\frac{C}{n}}\quad \mbox{and} \quad \E[e^{-\frac{X_0}{\sqrt{n}}}]\le e^{-\frac{1}{\sqrt{n}}+\frac{C}{n}}.
\]
Let us take $\lambda=\frac{1}{\sqrt{n}n^k}$. Then
\[
\E[e^{\lambda j^k X_j}]\le \E[e^{\frac{X_0}{\sqrt{n}}}]^{(\frac{j}{n})^k}\le e^{(\frac{1}{\sqrt{n}}+\frac{C}{n})(\frac{j}{n})^k} \quad\mbox{and}\quad \E[e^{-\lambda j^k X_j}]\le e^{(-\frac{1}{\sqrt{n}}+\frac{C}{n})(\frac{j}{n})^k}.
\]
As a consequence, we get that for sufficiently large $n$,
\[
  \P\bigg(I_j> e^{j-\varepsilon j}, \forall m \le j\le n;  \Big|\sum_{j=m}^n Y_j - \sum_{j=m}^n j^k \Big|\geq \delta \sum_{j=m}^n j^k\bigg)\leq 2 e^{-\frac{\varepsilon}{4(k+1)}\sqrt{n}},
\]
which is $o_t(1)(\log t)^{-p}$ when $t\to \infty$. Therefore, for any $\delta>0$, we can find a constant $C_{\delta,k,p}>0$ such that
\[
\P\bigg(\Big|\sum_{j=m}^n Y_j -\sum_{j=m}^n j^k \Big|\ge \delta \sum_{j=m}^n j^k\bigg)
\le  C_{\delta,k,p} (\log t)^{-p}.
\]
Since $\delta$ is arbitrary, in view of \eqref{sumJ} and \eqref{sumY}, we obtain the concentration inequality \eqref{BMsumh} and complete the proof of \eqref{cvgPsumh}.
\end{proof}

In fact, using similar arguments as above, one can also prove that for any $y\in K$, if $\gamma=\frac{\J(x)}{(\log\J(x))^{\frac14}}$ and $r=\J(x)^a$ with some $a\in(0,1)$, then for any $\delta>0$, $p>1$, there exists some constant $C_{\delta, p}>0$ such that
\begin{equation}\label{sumu}
\P_y\left(\Big|\sum_{k=\tau_r}^{\tau_\gamma-1}\frac{1}{\J(S_k)^2\log\J(S_k)}- 4\log \frac1a\Big|\ge \delta\right)\le C_{\delta,p} (\log \J(x))^{-p}
\end{equation}

Now we are ready to prove Lemma \ref{sumh}.
\begin{proof}[Proof of Lemma \ref{sumh}]
Similarly to $\sigma_r(w)$, let $\sigma_r:=\max\{0\le k<T_K \colon S_k\notin \B_r\}$ be the last time that the random walk $\{S_k\}_{k \geq 0}$ stays outside the ball $\B_r$ before hitting $K$. We will take $r=\gamma=\frac{\J(x)}{(\log\J(x))^{\frac14}}$ for $\|x\|\gg1$. Under $\P_x$, on the event $\{T_K<\infty\}$,
\[
\sum_{k=0}^{T_K}h(S_k)=\sum_{k=0}^{\sigma_\gamma}h(S_k)+\sum_{k=\sigma_\gamma+1}^{T_K}h(S_k).
\]
Then it follows from the Markov property that
\begin{align*}
&\E_x\left[\sum_{k=0}^{T_K}h(S_k); T_K<\infty\right]\\
&=\sum_{y\in K}\sum_{z\in\partial_\mu\B_\gamma}\sum_{i=0}^\infty\sum_{j=0}^\infty\E_x\left[\sum_{k=0}^{i}h(S_k)+\sum_{k=i+1}^{i+j}h(S_k); T_K=i+j, S_{i+j}=y, \sigma_\gamma=i, S_i=z\right]\\
&=\sum_{y\in K}\sum_{z\in\partial_\mu\B_\gamma}\sum_{i=0}^\infty\sum_{j=0}^\infty\E_x\left[\sum_{k=0}^{i}h(S_k)\ind{T_K>i, S_i=z}\right]\P_z(T_K=j, S_j=y, \{S_k\}_{1\le k\le j}\subset\B_\gamma)\\
&+\sum_{y\in K}\sum_{z\in\partial_\mu\B_\gamma}\sum_{i=0}^\infty\sum_{j=0}^\infty\P_x(T_K>i, S_i=z)\E_z\left[\sum_{k=1}^j h(S_k)\ind{T_K=j, S_j=y, \{S_k\}_{1\le k\le j}\subset\B_\gamma}\right].
\end{align*}
Recall that $\tau_r=\inf\{n\ge 0\colon S_n\notin\B_r\}$. By time reversal, we observe that
\[
\sum_{j=0}^\infty\P_z(T_K=j, S_j=y, \{S_k\}_{1\le k\le j}\subset\B_\gamma)=\P_y(T_K^+>\tau_\gamma, S_{\tau_\gamma}=z),
\]
and 
\[
\sum_{j=0}^\infty\E_z\left[\sum_{k=1}^j h(S_k)\ind{T_K=j, S_j=y, \{S_k\}_{1\le k\le j}\subset\B_\gamma}\right]=\E_y\left[\sum_{k=0}^{\tau_\gamma-1}h(S_k)\ind{T^+_K>\tau_\gamma, S_{\tau_\gamma}=z}\right].
\]
As a result, 
\begin{equation}\label{sumh2parts}
\E_x\left[\sum_{k=0}^{T_K}h(S_k); T_K<\infty\right]=\Sigma_0(h)+\Sigma_1(h),
\end{equation}
where
\begin{align*}
&\Sigma_0(h):=\sum_{y\in K}\sum_{z\in\partial_\mu\B_\gamma}\sum_{i=0}^\infty\E_x\left[\sum_{k=0}^{i}h(S_k)\ind{T_K>i, S_i=z}\right]\P_y(T_K^+>\tau_\gamma, S_{\tau_\gamma}=z),\\
&\Sigma_1(h):=\sum_{y\in K}\sum_{z\in\partial_\mu\B_\gamma}\sum_{i=0}^\infty\P_x(T_K>i, S_i=z)\E_y\left[\sum_{k=0}^{\tau_\gamma-1}h(S_k)\ind{T^+_K>\tau_\gamma, S_{\tau_\gamma}=z}\right].
\end{align*}
First, note that $\sum_{i=0}^\infty\P_x(T_K>i, S_i=z)\le g(x,z)\le \frac{C}{(1+\J(x))^{2}}$ for all $z\in\partial_\mu\B_\gamma$. Hence, 
\begin{align*}
&\Sigma_1(h)\le  \sum_{y\in K}\frac{C}{(1+\J(x))^{2}}\E_y\left[\sum_{k=0}^{\tau_\gamma-1}h(S_k)\ind{T^+_K>\tau_\gamma}\right]\\
&\le \frac{C}{(1+\J(x))^{2}}\sum_{y\in K}\E_y\left[\Big|\sum_{k=0}^{\tau_\gamma-1}h(S_k)-\frac{4c_h}{k+1}(\log \gamma)^{k+1}\Big|\ind{T^+_K>\tau_\gamma}\right]\\
& \quad + \frac{C|K|}{(1+\J(x))^{2}}\frac{4c_h}{k+1}(\log \gamma)^{k+1}.
\end{align*}
Here, we notice that for any $\delta>0$,
\begin{multline*}
\E_y\left[\Big|\sum_{k=0}^{\tau_\gamma-1}h(S_k)-\frac{4c_h}{k+1}(\log \gamma)^{k+1}\Big|\ind{T^+_K>\tau_\gamma}\right]\le \delta(\log \gamma)^{k+1}+\\
\qquad \E_y\left[\Big|\sum_{k=0}^{\tau_\gamma-1}h(S_k)-\frac{4c_h}{k+1}(\log \gamma)^{k+1}\Big|\ind{|\sum_{k=0}^{\tau_\gamma-1}h(S_k)-\frac{4c_h}{k+1}(\log \gamma)^{k+1}|\ge \delta(\log\gamma)^{k+1}}\right],
\end{multline*}
By the Cauchy-Schwarz inequality, the last expectation on the right-hand side is bounded by
{\scriptsize
\[
\E_y\left[\Big(\sum_{k=0}^{\tau_\gamma-1}h(S_k)-\frac{4c_h}{k+1}(\log \gamma)^{k+1}\Big)^2\right]^{\frac12}\P_y\left(\Big|\sum_{k=0}^{\tau_\gamma-1}h(S_k)-\frac{4c_h}{k+1}(\log \gamma)^{k+1}\Big|\ge \delta(\log\gamma)^{k+1}\right)^{\frac12}.
\]
}
Applying Lemma \ref{sumhmomp} with $p=2$, we obtain that for any $\delta>0$, $y\in K$, 
\[
	\E_y\left[\Big|\sum_{k=0}^{\tau_\gamma-1}h(S_k)-\frac{4c_h}{k+1}(\log \gamma)^{k+1}\Big|\ind{T^+_K>\tau_\gamma}\right]\le \delta(\log \gamma)^{k+1}+ C (\log\gamma)^k.
\]
If $\gamma$ is sufficiently large, we also have the estimate
\begin{equation}\label{centralsumh}
\E_y\left[\Big|\sum_{k=0}^{\tau_\gamma-1}h(S_k)-\frac{4c_h}{k+1}(\log \gamma)^{k+1}\Big|\ind{T^+_K>\tau_\gamma}\right]\le C \delta(\log\gamma)^{k+1}.
\end{equation}
Therefore, for all $x\in \Z^4$,
\begin{equation}\label{eq:Sigma1h-upperbound}
\Sigma_1(h)\le \frac{C(\log (2+\J(x)))^{k+1}}{(1+\J(x))^{2}}.
\end{equation}
Moreover, by Lemma \ref{fargreen} and \eqref{green}, $\sum_{i=0}^\infty\P_x(T_K>i, S_i=z)=\frac{\c_4}{\J(x)^2}(1+o_x(1))$ for $z\in\partial_\mu\B_\gamma$ and $\|x\|\gg1$. Then, as $\|x\|\to \infty$, 
\[
	\Sigma_1(h)= \frac{\c_4}{\J(x)^2}(1+o_x(1))\sum_{y\in K}\E_y\left[\sum_{k=0}^{\tau_\gamma-1}h(S_k)\ind{T^+_K>\tau_\gamma}\right].
\]
Using \eqref{centralsumh} and taking $\delta\to 0$, we have
\[
	\Sigma_1(h)= \frac{\c_4}{\J(x)^2}(1+o_x(1))\sum_{y\in K}\frac{4c_h}{k+1}(\log \gamma)^{k+1}\P_y(T^+_K>\tau_\gamma). 
\]
When $\|x\|\to \infty$, as $\P_y(T^+_K>\tau_\gamma)\to \P_y(T^+_K=\infty)$, we deduce that
\begin{equation}\label{eq:Sigma1h-asymp}
\Sigma_1(h)=\frac{4\c_4c_h}{(k+1)\J(x)^2}(\log \J(x))^{k+1}\cap(K) (1+o_x(1)).
\end{equation}
It remains to bound $\Sigma_0(h)$. In fact,
\begin{align*}
\Sigma_0(h)\le &\sum_{y\in K}\sum_{z\in \partial_\mu\B_\gamma}\sum_{i=0}^\infty \E_x\left[\sum_{k=0}^i h(S_k)\ind{S_i=z} \right]\P_y(S_{\tau_\gamma}=z, \tau_\gamma<T^+_K)\\
=&\sum_{y\in K}\sum_{z\in\partial_\mu\B_\gamma}\sum_{w\in\Z^4}\sum_{k=0}^\infty\sum_{i=k}^\infty h(w)\P_x(S_k=w, S_i=z)\P_y(S_{\tau_\gamma}=z, \tau_\gamma<T^+_K)\\
=&\sum_{y\in K}\sum_{z\in\partial_\mu\B_\gamma}\sum_{w\in\Z^4}h(w)g(x,w)g(w,z)\P_y(S_{\tau_\gamma}=z, \tau_\gamma<T^+_K).
\end{align*}
It then follows from \eqref{hgg4d} that
\begin{align}
\Sigma_0(h)\le & \,C\frac{(\log(2+\J(x)))^k\log\log(2+\J(x))}{(1+\J(x))^2} \sum_{y\in K}\sum_{z\in\partial_\mu\B_\gamma}\P_y(S_{\tau_\gamma}=z, \tau_\gamma<T^+_K)\nonumber \\
\le & \,C\frac{(\log(2+\J(x)))^k\log\log(2+\J(x))}{(1+\J(x))^2}|K|. \label{eq:Sigma0h}
\end{align}
Going back to \eqref{sumh2parts}, and comparing \eqref{eq:Sigma0h} with \eqref{eq:Sigma1h-upperbound} and \eqref{eq:Sigma1h-asymp}, we thus obtain 
\[
\E_x\left[\sum_{k=0}^{T_K}h(S_k); T_K<\infty\right]=\Sigma_0(h)+\Sigma_1(h)\le \frac{C(\log (2+\J(x)))^{k+1}}{(1+\J(x))^{2}}, \forall x\in\Z^4,
\]
and as $\|x\|\to \infty$,
\[
\E_x\left[\sum_{k=0}^{T_K}h(S_k); T_K<\infty\right]=\frac{4\c_4c_h}{(k+1)\J(x)^2}(\log \J(x))^{k+1}\cap(K) (1+o_x(1)).
\]
Thanks to \eqref{hittingprobabS-2}, this completes the proof of Lemma \ref{sumh}.
\end{proof}

\section{Appendix: Proof of Proposition \ref{moments} and Lemma \ref{mainHx}}\label{AppendixB}

\begin{proof}[Proof of Proposition \ref{moments}]
We will study the moments of $L_K$ and $Z_\T(K)$ recursively. Recall that $\ell_j(x,K)=\E_x[L_K^j]$ and $m_j(x,K)=\E_x[Z_\T(K)^j]$ for any $j\in\n$. The claimed results for $j=1,2$ have been shown in \eqref{meanL4d}, \eqref{2momL} and \eqref{2momZ}.

Take some integer $n\ge2$, and let us work under the assumption that $\sum_k k^{n+1}p_k<\infty$. Then, obviously, for all $j\in\{1,\cdots, n\}$, $\sum_k k^j p_k<\infty$. Assume that \eqref{momL} and \eqref{momZ} hold for all $1\leq j\leq n$. We also assume that for all $1\leq j\leq n$, there exist constants $\C_j>0$ such that
\begin{align}
\ell_j (x,K)\le & \,\C_j j!\frac{(\log(2+\J(x)))^{j-1}}{(1+\J(x))^2}, \forall x\in\Z^4;\label{bdmomL}\\
m_j(x,K)\le & \,\C_j j!\frac{(\log(2+\J(x)))^{j-1}}{(1+\J(x))^2}, \forall x\in\Z^4.\label{bdmomZ}
\end{align}
We are going to prove that the convergences \eqref{momL} and \eqref{momZ}, together with the upper bounds \eqref{bdmomL} and \eqref{bdmomZ}, hold for $j=n+1$.

First, observe that
{\small
\begin{align*}
\ell_{n+1}(x,K)=&\,\E_x\left[\bigg(\sum_{u\in\T}\ind{u\in\L_K}\bigg)^{n+1}\right]=\E_x\left[\sum_{u_1,\cdots, u_{n+1}\in\T}\ind{u_1,\cdots, u_{n+1}\in\L_K}\right],\\
m_{n+1}(x,K)=&\,\E_x\left[\bigg(\sum_{u\in\T}Z_{\T_u}(K)\ind{u\in\L_K}\bigg)^{n+1}\right]=\E_x\left[\sum_{u_1,\cdots,u_{n+1}\in\T}\prod_{k=1}^{n+1}(Z_{\T_{u_k}}(K)\ind{u_k\in \L_K})\right].
\end{align*}
}
In the following, we will only focus on $\ell_{n+1}(x,K)$ as $m_{n+1}(x,K)$ can be dealt with similarly.
There are two cases for $u_1,\cdots, u_{n+1}\in\L_K$: 
\begin{itemize}
\item either $u_1=u_2=\cdots=u_{n+1}$;
\item or $\{u_1,\cdots, u_{n+1}\}$ contains at least two distinct individuals.
\end{itemize}
In the second case, we consider the most recent common ancestor $\omega$ of $u_1,\cdots, u_{n+1}$ in $\T$. Let $N_\omega$ be the number of children of $\omega$. Among the $N_\omega=m$ children of $\omega$, if there are exactly $r$ children having descends in $\{u_1,\cdots,u_{n+1}\}$, we must have $2\le r\le m\wedge (n+1)$ and denote the $r$ children by $\omega i_1,\cdots, \omega i_r$.
We write $\Xi_{r,m,n+1}=\aleph_{r,m}\times \Upsilon_{r,n+1}$ where 
\begin{align*}
\aleph_{r,m}:=&\{(i_1,\cdots, i_r)\in \n^r\vert 1\le i_1<\cdots<i_r\le m\},\\
\Upsilon_{r,n+1}:=&\Big\{(k_1,\cdots, k_r)\in(\n^*)^r \Big\vert \sum_{j=1}^r k_j=n+1\Big\}.
\end{align*}
It then follows that
{\small
\begin{align*}
&\ell_{n+1}(x,K)=  \E_x\left[\sum_{u\in\T}\ind{u\in\L_K}\right]\\
&+\E_x\left[\sum_{\omega\in\T}\ind{\omega<\L_K}\sum_{m=2}^\infty \ind{N_\omega=m}\sum_{r=2}^{m\wedge(n+1)}\sum_{\Xi_{r,m, n+1}}\frac{(n+1)!}{k_1!\cdots k_r!}\prod_{j=1}^r \bigg(\sum_{u\in\T_{\omega i_j}}\ind{u\in\L_K}\bigg)^{k_j}\right]\\
&=\ell_1(x,K)+\sum_{r=2}^{n+1}\E_x\left[\sum_{\omega\in\T}\ind{\omega<\L_K}\sum_{m=r}^\infty \ind{N_\omega=m}\sum_{\Xi_{r,m, n+1}}\frac{(n+1)!}{k_1!\cdots k_r!}\prod_{j=1}^r \ell_{k_j}(S_{\omega i_j},K)\right].
\end{align*}
}
For the last sum on the right-hand side, the term $r=2$ gives the main contribution while the terms $r\ge 3$ are all negligeable. To treat them separately, for any integer $r\in[2,n+1]$, we set
{\small
\begin{equation}\label{rsubtrees}
\E_\eqref{rsubtrees}(r):=\E_x\bigg[\sum_{\omega\in\T}\ind{\omega<\L_K}\sum_{m=r}^\infty \ind{N_\omega=m}\sum_{\Xi_{r,m, n+1}}\frac{(n+1)!}{k_1!\cdots k_r!}\prod_{j=1}^r \ell_{k_j}(S_{\omega i_j},K)\bigg].
\end{equation}
}
For $r=2$, it follows from the many-to-one lemma that
{\small
\begin{align*}
&\E_\eqref{rsubtrees}(2)\\
&=\sum_{k=0}^\infty \E_x\bigg[\ind{T_K>k}\sum_{m\ge 2}p_m{m\choose 2}\sum_{j=1}^n {n+1\choose j} \sum_e \ell_j(S_k+e, K)\mu(e) \sum_{e'}\ell_{n+1-j}(S_k+e',K)\mu(e')\bigg]\\
&= \frac{\sigma^2(n+1)!}{2}\sum_{j=1}^n \sum_{k=0}^\infty \E_x\left[\ind{T_K>k}F_{j,n+1-j}(S_k)\P_{S_k}(T_K<\infty)\right]\\
&=\frac{\sigma^2(n+1)!}{2}\sum_{j=1}^n\sum_{k=0}^\infty \E_x\left[\ind{T_K>k}F_{j,n+1-j}(S_k)\ind{T_K<\infty}\right],
\end{align*}
}
where the last equality comes from the Markov property and for all $z\notin K$,
\[
F_{j,n+1-j}(z):=\frac{\sum_e \ell_j(z+e, K)\mu(e)}{j!}\frac{\sum_{e'} \ell_{n+1-j}(z+e', K)\mu(e')}{(n+1-j)!} \frac{1}{\P_z(T_K<\infty)}.
\]
Using the assumptions for $j$ and $n+1-j\in\{1,\cdots,n\}$, together with \eqref{hittingprobabS-2} and \eqref{hittingprobab4d}, one sees that as $\| x\|\to\infty$,
\[
\frac{\sigma^2}{2}F_{j,n+1-j}(x)\sim (2\c_4\sigma^2\cap(K))^{n}\frac{(\log \J(x))^{n-1}}{4\J(x)^2},
\]
We hence deduce from Lemma \ref{sumh} and \eqref{hittingprobabS-2} that
\begin{align*}
\E_\eqref{rsubtrees}(2)=& \frac{\sigma^2(n+1)!}{2}\sum_{j=1}^n\E_x\left[\sum_{k=0}^{T_K-1}F_{j,n+1-j}(S_k)\vert T_K<\infty\right]\P_x(T_K<\infty)\\
=&(n+1)!\sum_{j=1}^n \frac{(2\c_4\sigma^2\cap(K))^{n}}{n}(\log\J(x))^n \frac{\c_4\cap(K)}{\J(x)^2}(1+o_x(1))\\
=&(n+1)!(2\c_4\sigma^2\cap(K))^{n} \frac{\c_4\cap(K)(\log \J(x))^{n}}{\J(x)^2}(1+o_x(1)),
\end{align*}
which suffices to conclude that 
\[
\E_\eqref{rsubtrees}(2)\sim (n+1)! (2\c_4\sigma^2\cap(K))^{n+1} (\log\J(x))^{n+1}u_K(x).
\]
On the other hand, for $r\ge 3$, similarly as above,
\begin{align*}
&\sum_{r=3}^{n+1}\E_\eqref{rsubtrees}(r)\\
&=\sum_{r=3}^{n+1}\sum_{m\ge r}\frac{p_m m!}{r!(m-r)!} \sum_{k\ge0}\E_x\bigg[\ind{T_K>k}\sum_{\Upsilon_{r,n+1}}\frac{(n+1)!}{k_1!\cdots k_r!}\prod_{j=1}^r\Big(\sum_e\ell_{k_j}(S_k+e,K)\mu(e)\Big)\bigg]\\
&=\sum_{r=3}^{n+1}\sum_{m\ge r}\frac{p_m m!}{r!(m-r)!}\E_x\left[\sum_{k=0}^{T_K-1}G_{r,n+1}(S_k);T_K<\infty\right],
\end{align*}
where
\[
G_{r,n+1}(z):=\frac{(n+1)!}{\P_z(T_K<\infty)}\sum_{\Upsilon_{r,n+1}}\prod_{j=1}^r\left(\frac{\sum_e\ell_{k_j}(z+e,K)\mu(e)}{k_j!}\right), \forall z\notin K.
\]
Under the assumption \eqref{bdmomL}, one see that 
\[
G_{r,n+1}(z)\le C (n+1)! \frac{(\log(2+\J(z)))^{n-2}}{(1+\J(z))^2}.
\]
Meanwhile, we also have
\[
\sup_{3\le r\le n+1}\sum_{m\ge r}\frac{p_m m!}{r!(m-r)!}\le \sum_m p_m m^{n+1}<\infty.
\]
By \eqref{bdsumh}, we obtain that
\[
\sum_{r=3}^{n+1}\E_\eqref{rsubtrees}(r)\le C (n+1)! \frac{(\log(2+\J(x)))^{n-1}}{(1+\J(x))^2}.
\]
Therefore, by recurrence, we derive Proposition \ref{moments}.
\end{proof}

\begin{proof}[Proof of Lemma \ref{mainHx}]
Let us first prove \eqref{smallpartcvgH2}. Denote the expectation on the left-hand side of \eqref{smallpartcvgH2} by $\E_\eqref{smallpartcvgH2}$. Recalling \eqref{uK}, one has
\begin{align*}
&\,u_K(x)-\E_\eqref{smallpartcvgH2}\\
=&\,\E_{\Q^*_x}\left[\ind{T_K(w)<\infty}\prod_{k=1}^{T_K(w)}\prod_{u\in\L(w_k)}\ind{Z_{\T_u}(K)=0}\times\ind{\sum_{k=1}^{\sigma_\gamma(w)}\sum_{u\in\rt(w_k)} Z_{\T_u}(K)=0}\right]\\
=& \sum_{i=0}^\infty\sum_{j=0}^\infty\E_{\Q^*_x}\left[\begin{array}{r} \ind{T_K(w)=i+j, \sigma_\gamma(w)=i}\prod_{k=1}^{\sigma_\gamma(w)}\prod_{u\in\L(w_k)\cup\rt(w_k)}(1-u_K(S_u))\\
\times \prod_{k=1+\sigma_\gamma(w)}^{T_K(w)}\prod_{u\in\L(w_k)}(1-u_K(S_u))\end{array}\right].
\end{align*}
By the Markov property at time $i$, we get that
\begin{align*}
&u_K(x)-\E_\eqref{smallpartcvgH2}\\
=&\sum_{z\in\partial_\mu\B_\gamma}\sum_{i=0}^\infty\E_{\Q^*_x}\left[\ind{T_K(w)>i, S_i=z}\prod_{k=1}^{i}(1-\sum_eu_K(S_{w_{k-1}}+e)\mu(e))^{L(w_k)+R(w_k)}\right]\\
&\qquad\times\E_{\Q^*_z}\Bigg[\ind{T_K(w)<\infty, S_{w_k}\in\B_\gamma, \forall 1\le k\le T_K(w)}\prod_{k=1}^{T_K(w)}(1-\sum_eu_K(S_{w_{k-1}}+e)\mu(e))^{L(w_k)}\Bigg].
\end{align*}
Lemma 3.1 of \cite{Zhu} shows that for any $z\in\partial_\mu\B_\gamma$, 
\[
\sum_{i=0}^\infty\E_{\Q^*_x}\left[\ind{T_K(w)>i, S_i=z}\prod_{k=1}^{i}(1-\sum_eu_K(S_{w_{k-1}}+e)\mu(e))^{L(w_k)}\right]=(1+o_x(1))g(x,z).
\]
This can be directly extended to the following estimate:
\[
\sum_{i=0}^\infty\E_{\Q^*_x}\left[\ind{T_K(w)>i, S_i=z}\prod_{k=1}^{i}(1-\sum_eu_K(S_{w_{k-1}}+e)\mu(e))^{L(w_k)+R(w_k)}\right]=(1+o_x(1))g(x,z),
\]
where $g(x,z)=(1+o_x(1))\frac{\c_4}{\J(x)^2}$ as $z\in\partial_\mu\B_\gamma$. It implies that
\begin{align*}
&u_K(x)-\E_\eqref{smallpartcvgH2}\\
=&(1+o_x(1))\frac{\c_4}{\J(x)^2}\sum_{z\in\partial_\mu\B_\gamma} \E_{\Q^*_z}\left[\begin{array}{l} \ind{T_K(w)<\infty, S_{w_k}\in\B_\gamma, \forall 1\le k\le T_K(w)}\\
\times \prod_{k=1}^{T_K(w)}(1-\sum_eu_K(S_{w_{k-1}}+e)\mu(e))^{L(w_k)}\end{array}\right].
\end{align*}
Using the same arguments, we also have
\begin{align*}
&u_K(x)\\
=&(1+o_x(1))\frac{\c_4}{\J(x)^2}\sum_{z\in\partial_\mu\B_\gamma} \E_{\Q^*_z}\left[\begin{array}{l} \ind{T_K(w)<\infty, S_{w_k}\in\B_\gamma, \forall 1\le k\le T_K(w)} \\
\times \prod_{k=1}^{T_K(w)}(1-\sum_eu_K(S_{w_{k-1}}+e)\mu(e))^{L(w_k)} \end{array}\right].
\end{align*}
We thus get \eqref{smallpartcvgH2} stating that $\E_\eqref{smallpartcvgH2}=o_x(1)u_K(x)$.

Next, we prove \eqref{smallpartcvgH1}. Denote the expectation on the left-hand side of \eqref{smallpartcvgH1} by $\E_\eqref{smallpartcvgH1}$. Similarly as above, one sees that
\begin{align*}
\E_\eqref{smallpartcvgH1}=&(1+o_x(1))\frac{\c_4}{\J(x)^2}\sum_{z\in\partial_\mu\B_\gamma} \E_{\Q^*_z}\Bigg[\ind{T_K(w)<\infty, S_{w_k}\in\B_\gamma, \forall 1\le k\le T_K(w)}\\
&\times\prod_{k=1}^{T_K(w)}(1-\sum_eu_K(S_{w_{k-1}}+e)\mu(e))^{L(w_k)}\ind{\min_{1\le k\le \sigma_{\J(x)^{a+\delta}}(w)}\J(S_{w_k})\le \J(x)^a}\Bigg].
\end{align*}
By time reversal, we get that
\begin{align*}
\E_\eqref{smallpartcvgH1}=&(1+o_x(1))\frac{\c_4}{\J(x)^2} \sum_{y\in K}\E_y\left[\begin{array}{l} \ind{\tau_\gamma<T^+_K}\prod_{k=0}^{\tau_\gamma-1}(1-\sum_e u_K(S_k+e)\mu(e))^{L_k}\\
\times\ind{\min_{\tau_{\J(x)^{a+\delta}}\le k<\tau_\gamma}\J(S_k)\le \J(x)^a}\end{array}\right]\\
\le& \frac{C}{(1+\J(x))^2}\sum_{y\in K}\P_y\bigg(\min_{\tau_{\J(x)^{a+\delta}}\le k<\tau_\gamma}\J(S_k)\le \J(x)^a\bigg).
\end{align*}
Here we are going to use again Zaitsev's extension \cite{Zaitsev} of the Koml\'os-Major-Tusn\'ady strong invariance principle, now up to time $\gamma^{2+\delta}$, since we know that for any $\delta>0$,
\[
\P_y(\tau_\gamma\ge \gamma^{2+\delta})\le C (\log \J(x))^{-p},
\]
for any $p>1$ and $\|x\|\gg1$. Let us keep the same notation introduced in the proof of \eqref{cvgPsumh}. It then follows that
\begin{align*}
&\P_y\left(\min_{\tau_{\J(x)^{a+\delta}}\le k<\tau_\gamma}\J(S_k)\le \J(x)^a\right)\le C(\log \J(x))^{-p}\\
&+\P_y\left(\max_{1\le k\le \gamma^{2+\delta}}\|\Gamma^{-1/2}S_k-\W_k\|\ge C\log \gamma\right)+\P\left(\inf_{t\ge \tau_{\J(x)^{a+\delta/2}}^{(\rho)}}\rho_t\le \J(x)^{a+\delta/4}\right).
\end{align*}
According to \cite{Zaitsev}, the first probability on the right-hand side is bounded from above by some positive power of $\J(x)^{-1}$. Meanwhile, the second probability is equal to $\J(x)^{-\delta/2}$ by \eqref{infBessel}.
By taking $p>1$, we deduce that
\[
\E_\eqref{smallpartcvgH1}=o_x(1) u_K(x).
\]

It remains to prove \eqref{cvgpositionH}. Denote the expectation on the left-hand side of \eqref{cvgpositionH} by $\E_\eqref{cvgpositionH}$. Again, using similar arguments as above, one has
\begin{align*}
&\E_\eqref{cvgpositionH}\\
=&(1+o_x(1))\frac{\c_4}{\J(x)^2} \sum_{y\in K}\E_y\left[\begin{array}{l} \ind{\tau_\gamma<T^+_K}\prod_{k=0}^{\tau_\gamma-1}(1-\sum_e u_K(S_k+e)\mu(e))^{L_k}\\
\times \prod_{k=\tau_r}^{\tau_\gamma-1}(1-\sum_e u_K(S_k+e)\mu(e))^{R_k} \end{array}\right].
\end{align*}
We write $h(S_k):=\sum_e u_K(S_k+e)\mu(e)$. Using the fact that $1-s=e^{-(1+o(1))s}$ for $s\to 0+$ and \eqref{smallpartcvgH1}, one sees that
\begin{align*}
\E_\eqref{cvgpositionH}=&o_x(1)u_K(x)+
(1+o_x(1))\frac{\c_4}{\J(x)^2} \sum_{y\in K}\E_y\Bigg[\ind{\tau_\gamma<T^+_K}\prod_{k=0}^{\tau_r-1}(1-h(S_k))^{L_k}\\
&\times\exp\bigg(-(1+o_x(1))\sum_{k=\tau_r}^{\tau_\gamma-1}h(S_k)(L_k+R_k)\bigg)\ind{\min_{\tau_r\le k<\tau_\gamma}\J(S_k)\ge r^{1-\delta}}\Bigg].
\end{align*}
Note that $(L_k,R_k)_{k\ge1}$ are i.i.d.~random vectors which are independent of the random walk $(S_k)_{k\ge0}$. So, 
\begin{align*}
&\E\left[\exp\bigg(-(1+o_x(1))\sum_{k=\tau_r}^{\tau_\gamma-1}h(S_k)(L_k+R_k)\bigg)\bigg\vert S_k, k\ge0; L_k, k<\tau_r\right]\\
=&\prod_{k=\tau_r}^{\tau_\gamma-1}\E\big[\exp(-(1+o_x(1))h(S_k) (L_k+R_k))\vert S_k \big].
\end{align*}
When $\J(S_k)\ge r^{1-\delta}$, 
\[
h(S_k)\le \sup_{\J(y) \ge r^{1-\delta}, e\in \supp(\mu)}u_K(y+e)=o_x(1).
\]
Given a positive deterministic $h_x=o_x(1)$, for all $h\in[0,h_x]$ we have uniformly
\[
\E[e^{-h(L_k+R_k)}]=e^{-(1+o_x(1))h\E[L_k+R_k]}=e^{-(1+o_x(1))h \sigma^2}.
\]
Moreover, on the event $\{\min_{\tau_r\le k<\tau_\gamma}\J(S_k)\ge r^{1-\delta}\}$, 
\[
 	h(S_k) = \sum_e u_K(S_k+e)\mu(e) = \frac{1+o_x(1)}{2\sigma^2\J(S_k)^2\log\J(S_k)}
\] 
for $\tau_r\le k\le \tau_{\gamma}-1$.
Putting all these estimates together, we arrive at
\begin{align*}
\E_\eqref{cvgpositionH}=&o_x(1)u_K(x)+
(1+o_x(1))\frac{\c_4}{\J(x)^2} \sum_{y\in K}\E_y\Bigg[\ind{\tau_\gamma<T^+_K}\prod_{k=0}^{\tau_r-1}(1-\sum_e u_K(S_k+e)\mu(e))^{L_k}\\
&\times\exp\bigg(-(1+o_x(1))\sum_{k=\tau_r}^{\tau_\gamma-1}\sigma^2 \frac{1+o_x(1)}{2\sigma^2\J(S_k)^2\log\J(S_k)}\bigg)\ind{\min_{\tau_r\le k<\tau_\gamma}\J(S_k)\ge r^{1-\delta}}\Bigg].
\end{align*}
Using again the fact that 
\[
	\P_y\Big(\min_{\tau_r\le k<\tau_\gamma}\J(S_k)< r^{1-\delta}\Big)=o\Big(\frac{1}{\log \J(x)}\Big),
\]
we get
\begin{align*}
\E_\eqref{cvgpositionH}=&o_x(1)u_K(x)+\\
&(1+o_x(1))\frac{\c_4}{\J(x)^2} \sum_{y\in K}\E_y\left[\begin{array}{l} \ind{\tau_\gamma<T^+_K}\prod_{k=0}^{\tau_r-1}(1-\sum_e u_K(S_k+e)\mu(e))^{L_k}\\
\times\exp\bigg(-(1+o_x(1))\sum_{k=\tau_r}^{\tau_\gamma-1}\sigma^2 \frac{1+o_x(1)}{2\sigma^2\J(S_k)^2\log\J(S_k)}\bigg)\end{array}\right].
\end{align*}
Applying the same arguments for $u_K(x)$, one sees that 
\begin{align*}
u_K(x)=&o_x(1)u_K(x)+\\
&(1+o_x(1))\frac{\c_4}{\J(x)^2} \sum_{y\in K}\E_y\left[\begin{array}{l} \ind{\tau_\gamma<T^+_K}\prod_{k=0}^{\tau_r-1}(1-\sum_e u_K(S_k+e)\mu(e))^{L_k} \\
\times \exp\bigg(-(1+o_x(1))\sum_{k=\tau_r}^{\tau_\gamma-1}\frac{\sigma^2/2}{2\sigma^2\J(S_k)^2\log\J(S_k)}\bigg)\end{array}\right].
\end{align*}
Using \eqref{sumu} for $\E_\eqref{cvgpositionH}$ and approximating 
\[
\exp\Big(-(1+o_x(1))\sum_{k=\tau_r}^{\tau_\gamma-1}\sigma^2 \frac{1+o_x(1)}{2\sigma^2\J(S_k)^2\log\J(S_k)}\Big)
\]
by 
\begin{align*}
\exp\Big(-(1+o_x(1))\frac{\sigma^2/2}{2\sigma^2} 4\log\frac1a-(1+o_x(1))\sum_{k=\tau_r}^{\tau_\gamma-1}\frac{\sigma^2/2}{2\sigma^2\J(S_k)^2\log\J(S_k)}\Big),
\end{align*}
we therefore deduce that 
\begin{align*}
&\E_\eqref{cvgpositionH}=o_x(1)u_K(x)+\\
&(1+o_x(1))\frac{\c_4}{\J(x)^2} \sum_{y\in K}\E_y\Bigg[\begin{array}{l} \ind{\tau_\gamma<T^+_K}\prod_{k=0}^{\tau_r-1}(1-\sum_e u_K(S_k+e)\mu(e))^{L_k}\\
\times a^{1+o_x(1)}\exp\bigg(-(1+o_x(1))\sum_{k=\tau_r}^{\tau_\gamma-1}\frac{\sigma^2/2}{2\sigma^2\J(S_k)^2\log\J(S_k)}\bigg) \end{array}\Bigg]\\
=&(a+o_x(1))u_K(x).
\end{align*}
This ends the proof of \eqref{cvgpositionH}.
\end{proof}

\section*{Acknowledgment}
We are deeply grateful to the anonymous referee for his/her thorough reading of the manuscript and for many valuable comments and suggestions, which have helped to improve the presentation of this work.

X.~C.~is supported by National Key R\&D Program of China (No.~2022YFA1006500). This work was supported by the National Natural Science Foundation of China (Grant No. 12571148) and by the Fundamental Research Funds for the Central Universities (No. 310432104). S.~L.~expresses his gratitude to School of Mathematical Sciences, Beijing Normal University for its hospitality during his research visit.

\bibliographystyle{alpha}

\end{document}